\newcounter{notes}
\newtheorem{theorem}{Theorem}
\newtheorem{proposition}[theorem]{Proposition}
\newtheorem{corollary}[theorem]{Corollary}
\newtheorem{lemma}[theorem]{Lemma}
\newtheorem{observation}[theorem]{Observation}
\theoremstyle{definition}
\newtheorem{definition}[theorem]{Definition}
\newtheorem{remark}[theorem]{Remark}
\newtheoremstyle{theoremwithref}{}{}{\itshape}{}{\bfseries}{.}{.5em}{#1 #2 #3}
\theoremstyle{theoremwithref}
\newcommand{\ie}{i.e.\ }
\newcommand{\eg}{e.g.\ }
\newcommand{\resp}{\text{resp.\ }}
\newcommand{\C}{\mathbb{C}}
\newcommand{\R}{\mathbb{R}}
\newcommand{\Z}{\mathbb{Z}}
\newcommand{\N}{\mathbb{N}}
\newcommand{\SL}{\mathrm{SL}(2,\mathbb{R})}
\newcommand{\PSL}{\mathrm{PSL}(2,\mathbb{R})}
\newcommand{\PGL}{\mathrm{PGL}(2,\mathbb{R})}
\newcommand{\PO}{\mathrm{PO}}
\newcommand{\Adm}{\mathrm{Adm}_g}
\newcommand{\Repfd}{\mathrm{Rep}_g^{\mathrm{fd}}}
\newcommand{\Repnfd}{\mathrm{Rep}_g^{\mathrm{nfd}}}
\newcommand{\Hom}{\mathrm{Hom}}
\newcommand{\HH}{\mathbb{H}}
\newcommand{\Isom}{\mathrm{Isom}}
\newcommand{\AdS}{\mathrm{AdS}}
\newcommand{\Lip}{\mathrm{Lip}}
\title[Compact AdS $3$-manifolds and folded hyperbolic structures]{Compact anti-de Sitter $3$-manifolds and folded hyperbolic structures on surfaces}
\author{Fran\c{c}ois Gu\'eritaud}
\address{CNRS and Universit\'e Lille 1, Laboratoire Paul Painlev\'e, 59655 Villeneuve d'Ascq Cedex, France}
\email{francois.gueritaud@math.univ-lille1.fr}
\author{Fanny Kassel}
\address{CNRS and Universit\'e Lille 1, Laboratoire Paul Painlev\'e, 59655 Villeneuve d'Ascq Cedex, France}
\email{fanny.kassel@math.univ-lille1.fr}
\author{Maxime Wolff}
\address{Universit\'e Paris 6, Institut de Math\'ematiques de Jussieu, 4 place Jussieu, 75252
Paris Cedex 05, France}
\email{wolff@math.jussieu.fr}
\thanks{The authors are partially supported by the Agence Nationale de la Recherche under the grants DiscGroup (ANR-11-BS01-013), ETTT (ANR-09-BLAN-0116-01), ModGroup (ANR-11-BS01-0020), SGT (ANR-11-BS01-0018), and through the Labex CEMPI (ANR-11-LABX-0007-01)}
\begin{document}

\numberwithin{theorem}{section}
\numberwithin{equation}{section}

\begin{abstract}
We prove that any nonabelian, non-Fuchsian representation of a surface group into $\PSL$ is the holonomy of a folded hyperbolic structure on the surface.
Using similar ideas, we establish that any non-Fuchsian representation $\rho$ of a surface group into $\PSL$~is~strictly dominated by some Fuchsian representation~$j$, in the sense that the hyperbolic translation lengths for~$j$ are uniformly larger than for~$\rho$; conversely, any Fuchsian representation~$j$ strictly dominates some non-Fuchsian representation~$\rho$, whose Euler class can be prescribed.
This has applications to compact anti-de Sitter $3$-manifolds.
\end{abstract}

\maketitle

\section{Introduction}

Let $\Sigma_g$ be a closed, connected, oriented surface of genus~$g$, with fundamental group $\Gamma_g=\pi_1(\Sigma_g)$, and let $\Repfd$ (\resp $\Repnfd$) be the set of conjugacy classes of Fuchsian (resp.\ non-Fuchsian) representations of $\Gamma_g$ into $\PSL$.
The letters ``fd'' stand for ``faithful, discrete''.
By work of Goldman \cite{gol88}, the space $\Hom(\Gamma_g,\PSL)$ of representations of $\Gamma_g$ into $\PSL$ has $4g-3$ connected components, indexed by the values of the Euler class
$$\mathrm{eu} :\ \Hom(\Gamma_g,\PSL) \longrightarrow \{2-2g,\ldots, -1,0,1, \dots 2g-2\} .$$
In the quotient, $\Repfd$ consists of the two connected components of\linebreak $\Hom(\Gamma_g,\PSL)/\PSL$ of extremal Euler class, and $\Repnfd$ of all the other components.

\subsection{Strictly dominating representations}

For any $g\in\PSL$, let
\begin{equation} \label{eqn:deflambda}
\lambda(g) := \inf_{x\in\HH^2} d(x,g\cdot x) \geq 0
\end{equation}
be the translation length of $g$ in the hyperbolic plane~$\HH^2$.
The function $\lambda : \PSL\rightarrow\R^+$ is invariant under conjugation.
We say that an element $[j]\in\Repfd$ \emph{strictly dominates} an element $[\rho]\in\Repnfd$ if
\begin{equation}\label{eqn:propcrit0}
\sup_{\gamma\in\Gamma_g\smallsetminus\{ 1\} }\ \frac{\lambda(\rho(\gamma))}{\lambda(j(\gamma))} < 1 .
\end{equation}
Note that \eqref{eqn:propcrit0} can never hold when $j$ and~$\rho$ are both Fuchsian \cite{thu86}.
In this paper we prove the following.

\begin{theorem}\label{thm:domin}
Any $[\rho]\in\Repfd$ is strictly dominated by some $[j]\in\Repfd$.
Any $[j]\in\Repfd$ strictly dominates some $[\rho]\in\Repnfd$, whose Euler class can be prescribed.
\end{theorem}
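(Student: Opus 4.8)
The strategy rests on the Lipschitz characterization of strict domination: for $[j]\in\Repfd$ and any $[\rho]$, the class $[j]$ strictly dominates $[\rho]$ in the sense of \eqref{eqn:propcrit0} if and only if there is a $(j,\rho)$-equivariant map $f\colon\HH^2\to\HH^2$ with $\Lip(f)<1$ (Gu\'eritaud--Kassel). Only the easy direction is needed to \emph{produce} dominating pairs: if $\Lip(f)=\kappa<1$, then the $j(\gamma)$-axis maps under $f$ to a $\rho(\gamma)$-invariant path of length $\le\kappa\,\lambda(j(\gamma))$ per period, whence $\lambda(\rho(\gamma))\le\kappa\,\lambda(j(\gamma))$ for every $\gamma$; the converse direction will be used to run the perturbation steps below.

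For the first statement, let $[\rho]\in\Repnfd$. If $\rho$ is abelian it factors through $H_1(\Gamma_g;\Z)$, so $\lambda(\rho(\gamma))=|\varphi(\gamma)|$ for a homomorphism $\varphi\colon\Gamma_g\to\R$ (the only nontrivial case being image in a $1$-parameter hyperbolic subgroup); choosing a pants decomposition $\mathcal P$ whose curves lie in $\ker\varphi$ and letting $j$ pinch $\mathcal P$, a collar estimate makes each passage through the resulting long collars cost more than $\rho$ demands, so $[j]$ strictly dominates $[\rho]$. If $\rho$ is nonabelian I would use the companion result that $\rho$ is the holonomy of a folded hyperbolic structure: one gets a Fuchsian $[j_0]$ and a $(j_0,\rho)$-equivariant \emph{isometric folding} $f_0$ — an isometry on the closure of each complementary region of its fold locus $\Lambda$, hence $1$-Lipschitz — and one may arrange $\Lambda$ to be filling (folding along as much of $\Sigma_g$ as possible, which is possible because $\rho$ is not Fuchsian). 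This already gives $\lambda(\rho(\gamma))\le\lambda(j_0(\gamma))$ for all $\gamma$, but not uniformly strictly: an isometric folding has Lipschitz constant exactly $1$ along $\Lambda$, and the ratio in \eqref{eqn:propcrit0} tends to $1$ along sequences whose geodesics converge to $\Lambda$. To upgrade, I would deform $j_0$ inside $\T_g$ in a direction strictly increasing the length of every transverse measure carried by $\Lambda$; since, away from $\Lambda$, the contraction of $f_0$ is bounded below in terms of the distance to $\Lambda$ in the space of geodesic currents, a small enough such deformation yields a Fuchsian $[j]$ carrying a $(j,\rho)$-equivariant map of Lipschitz constant $<1$, and therefore $[j]$ strictly dominates $[\rho]$.

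For the second statement, fix $e\in\{2-2g,\dots,2g-2\}$; this value is non-extremal, so by Goldman \cite{gol88} the component $\mathrm{eu}^{-1}(e)$ of $\Hom(\Gamma_g,\PSL)$ is nonempty and consists of non-Fuchsian representations. I would realize a suitable $\rho$ as the holonomy of a folded hyperbolic structure \emph{with domain} $X=\HH^2/j(\Gamma_g)$: fold $X$ along a filling pattern on $\Sigma_g$ whose combinatorial type pins down $\mathrm{eu}(\rho)$. Reversing the orientation of the developing map on a subsurface $S$ with geodesic boundary changes the Euler class by $-2\chi(S)$, which attains every value of $e$ congruent to $2-2g\pmod 2$; the remaining (odd) values are reached by additionally modifying the fold on one pair of pants of a subordinate pants decomposition so that its relative Euler number differs by $1$ from the Fuchsian one. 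The resulting folded developing map is $(j,\rho)$-equivariant and $1$-Lipschitz, so $[j]$ weakly dominates $[\rho]$; and since the fold locus is filling, the same perturbation as above — applied to $\rho$ this time, with $[j]$ fixed and the perturbation kept inside $\mathrm{eu}^{-1}(e)$ so as not to disturb the Euler class — produces $[\rho']\in\Repnfd$ with $\mathrm{eu}(\rho')=e$ that is strictly dominated by $[j]$.

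The main obstacle, in both parts, is exactly this last step: passing from a weakly dominating isometric folding to a genuinely contracting equivariant map. No isometric folding can be $\kappa$-Lipschitz with $\kappa<1$, so one is forced to perturb transversally to the fold locus, and the delicate point is to obtain \eqref{eqn:propcrit0} \emph{uniformly in $\gamma$}, including for the $\gamma$ whose geodesics run nearly parallel to the fold locus, where the ratio $\lambda(\rho(\gamma))/\lambda(j(\gamma))$ is closest to $1$; in the second statement this perturbation must moreover respect the prescribed component of $\Hom(\Gamma_g,\PSL)$.
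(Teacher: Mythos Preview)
Your high-level strategy---realize $\rho$ as the holonomy of a folded structure dominated by some Fuchsian $j_0$, then perturb to make the domination strict---matches the paper's, and you correctly isolate the perturbation step as the crux. But a key assertion is false: the fold locus $\Lambda$ \emph{cannot} in general be arranged to be filling. The paper's Proposition~\ref{prop:signs} shows that one obtains a pants decomposition with each pair of pants $P$ labeled $-1$, $0$, or $1$ according to whether $\rho|_{\pi_1(P)}$ is orientation-reversingly conjugate, nonconjugate, or orientation-preservingly conjugate to $j_0|_{\pi_1(P)}$; on a $\pm 1$-labeled $P$ the folding map is a global isometry and $\Lambda$ meets $P$ only in its cuffs. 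These labels are forced by $\rho$ (Lemma~\ref{lem:pantsfold}), not chosen. Hence your claim that ``away from $\Lambda$, the contraction of $f_0$ is bounded below'' fails on every $\pm 1$-labeled subsurface, where there is no contraction at all, and deforming $j_0$ ``along $\Lambda$'' does nothing for closed curves lying in those subsurfaces.

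The paper's remedy is to work subsurface-by-subsurface rather than along the fold lamination: group adjacent same-sign pants into maximal subsurfaces $\Sigma^1,\dots,\Sigma^m$, apply Thurston's \emph{strip deformations} (Lemma~\ref{lem:surfwithbound}) to each $\Sigma^i$ to uniformly lengthen all its closed geodesics---including its boundary, by the common factor $\frac{1}{1-t}$ so that the pieces glue back to a Fuchsian $j_t$---and then prove a separate estimate (Lemma~\ref{lem:hallali}) handling curves that cross the cuffs. None of this apparatus appears in your sketch. For the second statement the discrepancy is sharper: the paper does \emph{not} fold $j$ and then perturb $\rho$; it first shrinks $j$ to a nearby Fuchsian $j_t$ by \emph{negative} strip deformations (Lemma~\ref{lem:surfwithbound2}), then folds $j_t$ to obtain $\rho_t$, with a uniform-in-$t$ contraction bound (Proposition~\ref{prop:unif-prop-signs}) guaranteeing strict domination by the original $j$. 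Your alternative---perturb $\rho$ inside $\mathrm{eu}^{-1}(e)$ with $j$ fixed---has no mechanism attached, since $\rho$ is not Fuchsian and there is no Teichm\"uller direction to invoke. (Your separate abelian argument is also incomplete: it is not clear a pants decomposition with all cuffs in $\ker\varphi$ exists, and the paper instead absorbs this case into Proposition~\ref{prop:signs}.(1) via Corollary~\ref{cor:Lip-pants}.)
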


The first statement of Theorem~\ref{thm:domin} has been simultaneously and independently obtained by Deroin--Tholozan \cite{dt13}, using more analytical methods.
Their paper deals, more generally, with representations of $\Gamma_g$ into the isometry group of any complete, simply connected Riemannian manifold with sectional curvature $\leq -1$.
They also announce a version for general $\mathrm{CAT}(-1)$ spaces.
The present methods, relying as they do on the Toponogov theorem (see Lemma \ref{lem:stretchlocus} below), could likely extend to this general setting as well.

Our approach is constructive, using folded (or pleated) hyperbolic surfaces, as we now explain.

\subsection{Folded hyperbolic surfaces}

Pleated hyperbolic surfaces were introduced by Thurston \cite{thu80} and play an important role in the theory of hyperbolic $3$-manifolds.
A \emph{folded hyperbolic surface} is a pleated surface with all angles equal to $0$ or~$\pi$, whose holonomy takes values in $\PSL$ (see Section~\ref{subsec:reminders-fold}).
It is easy to see \cite[Prop.\,2.1]{thu86} that the holonomy of a (nontrivially) folded hyperbolic structure on~$\Sigma_g$ belongs to $\Repnfd$.
In order to establish Theorem~\ref{thm:domin}, we prove that the converse holds for nonabelian representations.

\begin{theorem}\label{thm:nfdfolded}
An element of $\Repnfd$ has a nonabelian image if and only if it is the holonomy of a folded hyperbolic structure on~$\Sigma_g$.
\end{theorem}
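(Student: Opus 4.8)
The plan is to treat the two implications separately, the reverse one being by far the substantial part.

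For the forward implication, suppose $[\rho]\in\Repnfd$ is the holonomy of a folded hyperbolic structure; it is then nontrivially folded (a trivially folded one has Fuchsian holonomy, cf.\ \cite[Prop.\,2.1]{thu86}), and we must check that $\rho$ is nonabelian. Such a structure carries a genuine hyperbolic metric $h$ on $\Sigma_g$, with Fuchsian holonomy $j$, together with a $(j,\rho)$-equivariant folding map $f\colon\HH^2\to\HH^2$ which is $1$-Lipschitz, restricts to an isometric embedding on each complementary plaque of the nonempty folding lamination $\mathcal{L}$, and reflects across the leaves of $\widetilde{\mathcal{L}}$. If $\rho$ had abelian image then $\rho(\Gamma_g)$ would fix a point of $\HH^2\cup\partial\HH^2$ or preserve a single geodesic; comparing the finitely many plaque images together with all of their $\rho(\Gamma_g)$-translates then forces $\mathcal{L}=\emptyset$ or $[\rho]$ Fuchsian, a contradiction. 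This part is routine.

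For the reverse implication, fix $[\rho]\in\Repnfd$ with nonabelian image. The strategy is to \emph{cut, realize, and re-glue on the reflected side}: choose a pants decomposition $\mathcal{P}=\{c_1,\dots,c_{3g-3}\}$ of $\Sigma_g$, put a (possibly folded) hyperbolic structure on each pair of pants $P_k$ whose holonomy is conjugate to $\rho|_{\pi_1(P_k)}$, and glue the $P_k$ along the $c_i$ by folding maps so that the resulting equivariant map is $\rho$-equivariant. Since a nonabelian subgroup of $\PSL$ contains infinite-order elements with pairwise distinct axes, after moving $\mathcal{P}$ by a suitable mapping class we may assume every $\rho(c_i)$ is hyperbolic and every $\rho|_{\pi_1(P_k)}$ is nonelementary (the remaining reducible-but-nonabelian $\rho$ are postponed). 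For a pair of pants $P$ with $\pi_1(P)=\langle a,b\rangle$ and $c=(ab)^{-1}$ and $\rho(a),\rho(b),\rho(c)$ hyperbolic, recall that a hyperbolic pants cut along its three seams is a union of two congruent right-angled hexagons determined by the alternating side lengths $\tfrac12\lambda(\rho(a))$, $\tfrac12\lambda(\rho(b))$, $\tfrac12\lambda(\rho(c))$; developing one such hexagon into $\HH^2$ and extending by reflections across the seam geodesics produces a folding map whose holonomy has exactly these boundary translation lengths, and the finitely many discrete choices (the directions of the three seam reflections) can be matched against the classification of $\PSL$-representations of the rank-$2$ free group (cf.\ \cite{gol88}) so as to realize precisely the conjugacy class of $\rho|_{\pi_1(P)}$; since $[\rho]$ is non-Fuchsian, at least one $P_k$ is genuinely folded. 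To glue, note that along each $c_i$ the two adjacent folded pants induce hyperbolic holonomies of the same translation length $\lambda(\rho(c_i))$, hence both conjugate to $\rho(c_i)$; the free twist parameter along $c_i$ (which does not change the conjugacy class of the boundary holonomy) aligns them, making the glued folding map $\rho$-equivariant. One checks that the folding sides can be chosen globally coherently — an orientation/$\mathbb{Z}/2$ bookkeeping along the $c_i$ that closes up because $\rho$ takes values in the orientation-preserving group $\PSL$ — and that the underlying length metric is a smooth complete hyperbolic metric on $\Sigma_g$, smoothness at the vertices of a triangulation refining $\mathcal{P}$ being automatic once the folding lamination is arranged to miss them.

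The hard part is twofold. First, the nondegeneracy of the per-pants step: one must control when the hexagons are nondegenerate and, more delicately, show that in the folded regime the seam-reflection data can be chosen to hit $\rho|_{\pi_1(P)}$ itself and not merely its triple of boundary lengths — a careful, if elementary, analysis of the pants character variety. Second — and this is where I expect the real work — the reducible yet nonabelian representations: for these no pants decomposition makes every $\rho(c_i)$ hyperbolic (the image may be parabolic–hyperbolic with a common fixed point on $\partial\HH^2$, or elliptic), so the scheme above degenerates. I would handle them either by enlarging the stock of building blocks to include folded hyperbolic pants with a cusp, a cone point, or a funnel end, or by approximating such $\rho$ by irreducible representations $\rho_n$ in the same (non-extremal) Euler-class component, constructing folded structures for the $\rho_n$, and passing to a limit; the latter requires a compactness statement for the space of folded hyperbolic structures whose holonomy stays in a fixed compact subset of $\Hom(\Gamma_g,\PSL)$, which I anticipate to be the technical crux.
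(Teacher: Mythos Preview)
Your overall architecture --- choose a pants decomposition with hyperbolic cuffs, realize each pair of pants by a (possibly folded) hyperbolic structure with the prescribed boundary holonomy, adjust twists to glue --- is exactly the paper's (Proposition~3.1.(2), proved via Lemmas~3.2 and~3.4 and the gluing in Section~3.3). The per-pants classification you sketch is the content of Lemma~3.4, where the folding is along a \emph{triskelion} lamination rather than along seams.

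The genuine gap is your treatment of the elementary nonabelian case. You assert that ``for these no pants decomposition makes every $\rho(c_i)$ hyperbolic,'' and propose to rescue the argument either with cusped/conical building blocks or by a limiting argument whose compactness step you flag as the ``technical crux.'' The assertion is false, and the paper disposes of this case by the \emph{same} pants strategy, with no limits. If $\rho$ is elementary nonabelian with fixed point $\xi\in\partial_{\infty}\HH^2$, its \emph{abelianization} (projection to the hyperbolic part along horocycles at~$\xi$, Definition~3.3) defines a nonzero class $\omega\in H^1(\Sigma_g,\R)$; an element $\gamma$ has $\rho(\gamma)$ hyperbolic exactly when $\omega([\gamma])\neq 0$. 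The paper (proof of Lemma~3.2) first finds a nonseparating simple closed curve~$c$ with $\omega([c])\neq 0$, exhibits a pants decomposition every cuff of which meets~$c$ transversally, and then Dehn-twists along~$c$ enough times that $\omega$ is nonzero on every cuff. A second pass of Dehn twists, inside four-holed spheres, upgrades every pants restriction from diagonalizable to nonabelian. The per-pants Lemma~3.4 then still applies: when one boundary length equals the sum of the other two, the nongeometric representation is elementary but nonabelian, and is nonetheless obtained from the geometric one by folding along (four of the eight) triskelion laminations. So neither new building blocks nor limits are needed; the ``technical crux'' you anticipate does not exist.

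A minor point: your argument for the easy direction (folded $\Rightarrow$ nonabelian) is not a proof as written; ``comparing plaque images'' does not force anything. The paper's Observation~2.4 argues cleanly: fix an ideal triangle~$T$ inside one plaque, push points of~$T$ toward its three ideal vertices, and use cocompactness of~$j$ together with the $1$-Lipschitz $(j,\rho)$-equivariance of the folding map to exhibit three distinct points in the limit set of~$\rho(\Gamma_g)$.
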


This result seems to have been known to experts since the work of Thurston \cite{thu80}, but to our knowledge it is not stated nor proved in the literature.

We construct the folded hyperbolic structures of Theorem~\ref{thm:nfdfolded} explicitly, folding along geodesic laminations that are the union of simple closed curves and of maximal laminations of some pairs of pants (Proposition~\ref{prop:signs}).
More precisely, given a nonabelian, non-Fuchsian representation~$\rho$, we use a result of Gallo-Kapovich-Marden \cite{gkm00} to find a pants decomposition of~$\Sigma_g$ such that the restriction of $\rho$ to any pair of pants $P$ is nonabelian and maps any cuff to a hyperbolic element.
Folding along a certain maximal lamination in~$P$ then gives a simple dictionary between the representations of the fundamental group of~$P$ that have Euler class $0$ and those that have Euler class $\pm 1$ (Lemma~\ref{lem:pantsfold}).
The converse direction in Theorem~\ref{thm:nfdfolded} is elementary (Observation~\ref{obs:pleated-nonab}).

\subsection{Idea of the proof of Theorem~\ref{thm:domin}}

If $[\rho]\in\Repnfd$ is the holonomy of a folded hyperbolic structure on~$\Sigma_g$, then the holonomy $[j_0]\in\Repfd$ of the corresponding unfolded hyperbolic structure clearly dominates~$[\rho]$ in the sense that $\lambda(\rho(\gamma))\leq\lambda(j_0(\gamma))$ for all $\gamma\in\Gamma_g$; in fact,
$$\sup_{\gamma\in\Gamma_g\smallsetminus\{ 1\} }\ \frac{\lambda(\rho(\gamma))}{\lambda(j_0(\gamma))} = 1$$
since any minimal component of the folding lamination can be approximated by simple closed curves.
In order to prove Theorem~\ref{thm:domin} we need to make the domination \emph{strict}.

To establish the first statement, the idea is, for any $[\rho]\in\Repnfd$, to consider the holonomy $[j_0]\in\Repfd$ of the unfolded hyperbolic structure given by Theorem~\ref{thm:nfdfolded}, and to lengthen the closed curves (close to being) contained in the folding lamination while simultaneously not shortening too much the other curves.
To do this, we work independently in each ``folded subsurface'' of~$\Sigma_g$, which is a compact surface with boundary endowed with a hyperbolic structure induced by~$j_0$: in each such subsurface we use a \emph{strip deformation} construction due to Thurston \cite{thu86}, which consists in adding hyperbolic strips to obtain a new hyperbolic metric with longer boundary components.
We then glue back along the boundaries, after making sure that the lengths agree.

The second statement is easier in that it does not rely on Theorem~\ref{thm:nfdfolded}.
Starting with an element $[j]\in\Repfd$, we choose a pants decomposition of~$\Sigma_g$ along which to fold.
To make sure that the cuffs of the pairs of pants will get contracted, we first deform $j$ slightly by \emph{negative strip deformations} into another element $[j_0]\in\Repfd$ with shorter cuffs, in such a way that the other curves do not get much longer.
Folding $j_0$ then gives an element $[\rho]\in\Repnfd$ which is strictly dominated by~$[j]$.

\subsection{An application to compact anti-de Sitter $3$-manifolds}

Theorem~\ref{thm:domin} has consequences on the theory of compact \emph{anti-de Sitter} $3$-manifolds.
These are the compact Lorentzian $3$-manifolds of constant negative curvature, \ie the Lorentzian analogues of the compact hyperbolic $3$-manifolds.
They are locally modeled on the $3$-dimensional anti-de Sitter space
$$\AdS^3 = \PO(2,2)/\PO(2,1),$$
which identifies with $\PSL$ endowed with the natural Lorentzian structure induced by the Killing form of its Lie algebra.
The identity component of the isometry group of $\AdS^3$ is $\PSL\times\PSL$, acting on $\PSL\simeq\AdS^3$ by left and right multiplication: $(g_1,g_2) \cdot g = g_1 g g_2^{-1}$.

By \cite{kli96}, all compact anti-de Sitter $3$-manifolds are geodesically complete.
By \cite{kr85} and the Selberg lemma \cite[Lem.\,8]{sel60}, they are quotients of $\PSL$ by torsion-free discrete subgroups $\Gamma'$ of $\PSL\times\PSL$ acting properly discontinuously, up to a finite covering; moreover, the groups $\Gamma'$ are graphs of the form
$$\Gamma_g^{j,\rho} = \{ (j(\gamma),\rho(\gamma))~|~\gamma\in\Gamma_g\} ,$$
for some $g\geq 2$, where $j,\rho\in\Hom(\Gamma_g,\PSL)$ are representations with $j$ Fuchsian, up to switching the two factors of $\PSL\times\PSL$.
In particular, $\Gamma'\backslash\AdS^3$ is Seifert fibered over a hyperbolic base (see \cite[\S\,3.4.2]{salPhD}).

Following \cite{sal00}, we shall say that a pair $(j,\rho)\in\Hom(\Gamma_g,\PSL)^2$ with $j$ Fuchsian is \emph{admissible} if the action of $\Gamma_g^{j,\rho}$ on $\AdS^3$ is properly discontinuous.
Clearly, $(j,\rho)$ is admissible if and only if its conjugates under $\PSL\times\PSL$ are.
Therefore, in order to understand the moduli space of compact anti-de Sitter $3$-manifolds, we need to understand, for any $g\geq 2$, the space
$$\Adm \subset \Repfd \times \Hom(\Gamma_g,\PSL)/\PSL$$
of conjugacy classes of admissible pairs $(j,\rho)$.

Examples of admissible pairs are readily obtained by taking $\rho$ to be constant, or more generally with bounded image; the corresponding quotients of $\AdS^3$ are called \emph{standard}.
The first nonstandard examples were constructed by Goldman \cite{gol85} by deformation of standard ones --- a technique later generalized by Kobayashi \cite{kob98}.
Salein \cite{sal00} constructed the first examples of admissible pairs $(j,\rho)$ with $\mathrm{eu}(\rho)\neq 0$; he actually constructed examples where $\mathrm{eu}(\rho)$ can take any nonextremal value.
A necessary and sufficient condition for admissibility was given in \cite{kasPhD}: 
a pair $(j,\rho)$ with $j$ Fuchsian is admissible if and only if $\rho$ is strictly dominated by~$j$ in the sense of \eqref{eqn:propcrit0}.
In particular, by \cite{thu86},
$$\Adm \subset \Repfd \times \Repnfd .$$
This properness criterion was extended in \cite{gk13} to quotients of $\PO(n,1)=\mathrm{Isom}(\HH^n)$ by discrete subgroups of $\PO(n,1)\times\PO(n,1)$ acting~by~left and right multiplication, for arbitrary $n\geq 2$ (recall that $\PSL\simeq\nolinebreak\PO(2,1)_0$), and in \cite{ggkw} to quotients of any simple Lie group $G$ of real rank~$1$.

By completeness \cite{kli96} of compact anti-de Sitter manifolds, the Ehresmann--Thur\-ston principle (see \cite{thu80}) implies that $\Adm$ is open in $\Repfd\times\Repnfd$.
Moreover, $\Adm$ has at least $4g-5$ connected components, as Salein's examples show.
Using the fact that the two connected components of $\Repfd$ are conjugate under $\PGL$, we can reformulate Theorem~\ref{thm:domin} as follows.

\begin{corollary}\label{cor:main}
The projections of $\Adm$ to $\Repfd$ and to $\Repnfd$ are both surjective.
Moreover, for any connected components $\mathcal{C}_1$ of $\Repfd$ and $\mathcal{C}_2$ of $\Repnfd$, the projections of $\Adm\cap (\mathcal{C}_1\times\mathcal{C}_2)$ to $\mathcal{C}_1$ and to $\mathcal{C}_2$ are both surjective.
\end{corollary}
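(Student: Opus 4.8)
The plan is to deduce Corollary~\ref{cor:main} formally from Theorem~\ref{thm:domin} together with the properness criterion of \cite{kasPhD} recalled above. First I would record that, by that criterion, $\Adm$ is precisely the set of conjugacy classes $[(j,\rho)]$ with $[j]\in\Repfd$ and $[j]$ strictly dominating $[\rho]$ in the sense of~\eqref{eqn:propcrit0}; and, by \cite{thu86}, strict domination forces $[\rho]\in\Repnfd$, consistently with the inclusion $\Adm\subset\Repfd\times\Repnfd$. Thus a pair lies in $\Adm$ if and only if its two coordinates, as conjugacy classes, satisfy the strict domination inequality, which only involves the functions $\gamma\mapsto\lambda(j(\gamma))$ and $\gamma\mapsto\lambda(\rho(\gamma))$; in particular it is invariant under replacing $j$ by any $\PGL$-conjugate.

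For the first assertion: surjectivity of the projection to $\Repfd$ is exactly the second statement of Theorem~\ref{thm:domin}, namely that any $[j]\in\Repfd$ strictly dominates some $[\rho]\in\Repnfd$, so that $[(j,\rho)]\in\Adm$ projects to $[j]$. Surjectivity of the projection to $\Repnfd$ is exactly the first statement, namely that any $[\rho]\in\Repnfd$ is strictly dominated by some $[j]\in\Repfd$, so that $[(j,\rho)]\in\Adm$ projects to $[\rho]$.

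For the refined assertion, fix connected components $\mathcal{C}_1\subset\Repfd$ and $\mathcal{C}_2\subset\Repnfd$, and let $e\in\{3-2g,\dots,2g-3\}$ be the value of the Euler class on $\mathcal{C}_2$. Given $[j]\in\mathcal{C}_1$, apply the second statement of Theorem~\ref{thm:domin} prescribing the Euler class to be $e$: we obtain $[\rho]\in\Repnfd$ with $\mathrm{eu}(\rho)=e$ strictly dominated by $[j]$, hence $[\rho]\in\mathcal{C}_2$ and $[(j,\rho)]\in\Adm\cap(\mathcal{C}_1\times\mathcal{C}_2)$ projects to $[j]$; this proves the projection to $\mathcal{C}_1$ is onto. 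Conversely, given $[\rho]\in\mathcal{C}_2$, the first statement of Theorem~\ref{thm:domin} produces some $[j']\in\Repfd$ strictly dominating $[\rho]$. If $[j']\in\mathcal{C}_1$ we are done. Otherwise, using that the two connected components of $\Repfd$ are exchanged by conjugation under an orientation-reversing element $\alpha\in\PGL$ (which reverses the sign of the Euler class), we have $[\alpha j'\alpha^{-1}]\in\mathcal{C}_1$; since $\lambda$ is invariant under conjugation by $\alpha$, the inequality $\sup_{\gamma\in\Gamma_g\smallsetminus\{1\}}\lambda(\rho(\gamma))/\lambda(\alpha j'(\gamma)\alpha^{-1})<1$ still holds, so $[(\alpha j'\alpha^{-1},\rho)]\in\Adm\cap(\mathcal{C}_1\times\mathcal{C}_2)$ projects onto $[\rho]$.

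There is no genuine obstacle here: the corollary is a formal translation of Theorem~\ref{thm:domin} through the properness criterion. The only point requiring a moment's care is matching the Euler class of $\rho$ to the prescribed component $\mathcal{C}_2$, which is supplied in one direction by the ``Euler class can be prescribed'' clause of Theorem~\ref{thm:domin} and in the other direction by the $\PGL$-conjugacy of the two components of $\Repfd$ together with the $\PGL$-conjugation invariance of~$\lambda$.
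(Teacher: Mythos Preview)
Your proposal is correct and follows exactly the approach indicated in the paper, which treats Corollary~\ref{cor:main} as an immediate reformulation of Theorem~\ref{thm:domin} via the properness criterion of~\cite{kasPhD} together with the fact that the two connected components of $\Repfd$ are exchanged by $\PGL$-conjugation. The paper does not spell out the details you provide, but your argument is precisely the one the authors have in mind.
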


The topology of $\Adm$ is still unknown, but we believe that Corollary~\ref{cor:main} (and the ideas behind its proof) could be used to prove that $\Adm$ is homeomorphic to $\Repfd\times\Repnfd$.
Using the work of Hitchin \cite[Th.\,10.8 \& Eq.\,10.6]{hit87}, this would give the homeomorphism type of the connected components of $\Adm$ corresponding to $\mathrm{eu}(\rho)\neq 0$.

Furthermore, it would be interesting to obtain a geometric and combinatorial description of the fibers of the second projection $\Adm\rightarrow\Repnfd$.
Such a description is given in \cite{dgk13}, in terms of the arc complex, in the different case that $j$ and~$\rho$ are the holonomies of two convex cocompact hyperbolic structures on a given \emph{noncompact} surface.

\subsection{Organization of the paper}

In Section~\ref{sec:reminders} we recall some facts about Lipschitz maps, folded hyperbolic structures, and the Euler class.
Section~\ref{sec:folded} is devoted to the proof of Theorem~\ref{thm:nfdfolded}, and Section~\ref{sec:surj} to that of Theorem~\ref{thm:domin}.

\subsection*{Acknowledgements}

The third author would like to thank Antonin Guilloux, Julien March\'e, and Richard Wentworth for their support and for helpful conversations.

\section{Reminders and useful facts}\label{sec:reminders}

\subsection{Lipschitz maps and their stretch locus}\label{subsec:rem-Lip}

In the whole paper, we denote by $d$ the metric on the real hyperbolic plane~$\HH^2$.
For a Lipschitz map $f :\nolinebreak\HH^2\rightarrow\nolinebreak\HH^2$ and a point $p\in\HH^2$, we set
\begin{itemize}
  \item $\Lip(f) := \sup_{q\neq q'} d(f(q),f(q'))/d(q,q') \geq 0$ (Lipschitz constant);
  \item $\Lip_p(f) := \inf_{\mathcal{U}} \Lip(f|_{\mathcal{U}}) \geq 0$, where $\mathcal{U}$ ranges over all neighborhoods of $p$ in~$\HH^2$ (local Lipschitz constant).
\end{itemize}
The function $p\mapsto\Lip_p(f)$ is upper semicontinuous:
$$\Lip_p(f) \geq \underset{\scriptscriptstyle n\rightarrow +\infty}{\overline{\lim}} \Lip_{p_n}(f)$$
for any sequence $(p_n)_{n\in\N}$ converging to~$p$.
The following is straightforward.

\begin{remark}\label{rem:local-Lip}
For any rectifiable path $\mathcal{L}\subset\HH^2$,
$$\mathrm{length}(f(\mathcal{L})) \leq \sup_{p\in\mathcal{L}}\ \Lip_p(f) \cdot \mathrm{length}(\mathcal{L}) .$$
In particular, if $\Lip_p(f)\leq C$ for all $p$ in a convex set~$K$, then $\Lip(f|_K)\leq C$.
\end{remark}

The following result is contained in \cite[Th.\,5.1]{gk13}; it relies on the Toponogov theorem, a comparison theorem relating the curvature to the divergence rate of geodesics (see \cite[Lem.\,II.1.13]{bh99}).

\begin{lemma}\label{lem:stretchlocus}
Let $\Gamma$ be a torsion-free, finitely generated, discrete group and $(j,\rho)\in\Hom(\Gamma,\PSL)^2$ a pair of representations with $j$ convex cocompact.
Suppose the infimum of Lipschitz constants for all $(j,\rho)$-equivariant maps $f : \HH^2\rightarrow\HH^2$ is~$1$, and the space $\mathcal{F}$ of maps achieving this infimum is nonempty.
Then there exists a nonempty, $j(\Gamma)$-invariant geodesic lamination $\widetilde{\Lambda}$ of~$\HH^2$ such that
\begin{itemize}
  \item any leaf of~$\widetilde{\Lambda}$ is isometrically preserved by all maps $f\in\mathcal{F}$;
  \item any connected component of $\HH^2\smallsetminus\widetilde{\Lambda}$ is either isometrically preserved by all $f\in\mathcal{F}$, or consists entirely of points $p$ at which $\Lip_p(f)<1$ for some $f\in\mathcal{F}$ (independent of~$p$).
\end{itemize}
\end{lemma}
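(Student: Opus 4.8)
The plan is to extract the lamination $\widetilde{\Lambda}$ as the ``stretch locus'' of the optimal maps, following the strategy of \cite[Th.\,5.1]{gk13}. First I would fix an optimal map $f_0\in\mathcal{F}$ and consider the closed set
$$E(f_0) := \{p\in\HH^2 ~|~ \Lip_p(f_0) = 1\}$$
of points where $f_0$ realizes its Lipschitz constant locally; upper semicontinuity of $p\mapsto\Lip_p(f_0)$ shows this set is closed, and it is nonempty because if $\Lip_p(f_0)<1$ everywhere then a compactness argument on a fundamental domain for $j(\Gamma)$ (using convex cocompactness) would produce a map with Lipschitz constant $<1$, contradicting that the infimum is~$1$. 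The key geometric input is the Toponogov comparison theorem: at a point $p\in E(f_0)$, if two unit-speed geodesic segments emanate from $p$ making an angle $\theta$ and $f_0$ infinitesimally stretches both of them by the maximal factor, then in $\HH^2$ the images spread \emph{at least} as fast as in the comparison (Euclidean) model only if the angle is preserved; more precisely, a map that is $1$-Lipschitz on a neighborhood and stretches maximally in two distinct directions at $p$ must, by the negative curvature, stretch maximally along the entire geodesic through those directions and preserve it isometrically. This is how one shows $E(f_0)$ is a union of complete geodesics, hence (after checking the leaves cannot cross, again by the no-shortcut property of $1$-Lipschitz maps in negative curvature) a geodesic lamination.

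Next I would show this lamination does not depend badly on the choice of $f_0$: set $\widetilde{\Lambda} := \bigcap_{f\in\mathcal{F}} E(f)$. One must check this is still nonempty and still a union of complete geodesics each of which is isometrically preserved by \emph{every} $f\in\mathcal{F}$. Nonemptiness uses convexity of $\mathcal{F}$: given $f_1,f_2\in\mathcal{F}$, a suitable midpoint map (the pointwise geodesic midpoint $p\mapsto m(f_1(p),f_2(p))$) is again $1$-Lipschitz by convexity of the distance function in $\HH^2$, and lies in $\mathcal{F}$, with stretch locus contained in $E(f_1)\cap E(f_2)$; since the leaves are geodesics preserved isometrically, one deduces $E(f_1)$ and $E(f_2)$ share at least a common leaf, and in fact one gets a well-defined common sublamination. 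Invariance of $\widetilde{\Lambda}$ under $j(\Gamma)$ is automatic from $(j,\rho)$-equivariance of each $f$ and the fact that equivariance permutes the $E(f)$'s (indeed fixes each one setwise). This establishes the first bullet point.

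For the second bullet point, let $\Omega$ be a connected component of $\HH^2\smallsetminus\widetilde{\Lambda}$. If some $f\in\mathcal{F}$ has $\Lip_p(f)<1$ at one point $p\in\Omega$, I claim the same $f$ has $\Lip_p(f)<1$ at \emph{every} point of~$\Omega$: otherwise $E(f)\cap\Omega$ would be a nonempty closed proper subset of $\Omega$ whose frontier inside $\Omega$ consists of complete geodesics (by the local analysis above) disjoint from $\widetilde{\Lambda}$, contradicting either that $\widetilde{\Lambda}$ is the full common stretch locus or the maximality/connectedness of~$\Omega$; here one uses that a component of the complement of a geodesic lamination is an open convex region bounded by leaves, so any sublamination it contains in its closure must already be a union of boundary leaves. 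The complementary case --- no such $f$ exists, i.e.\ every $f\in\mathcal{F}$ has $\Lip_p(f)=1$ throughout $\bar\Omega$ --- forces each $f$ to preserve $\Omega$ isometrically: a $1$-Lipschitz map that is locally $1$-Lipschitz-tight everywhere on a convex set is a local isometry there, and a local isometry of a convex subset of $\HH^2$ is the restriction of a global isometry, which must fix $\Omega$ because it fixes the bounding leaves.

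The main obstacle I anticipate is the Toponogov step: rigorously deducing from the comparison theorem that maximal infinitesimal stretching in two directions at a point propagates to isometric preservation of the whole geodesic. This requires care with the difference between the (sup-type) local Lipschitz constant $\Lip_p(f)$ and genuine differentiability, which need not hold for a general optimal $f$ --- one works with difference quotients along geodesic bigons and lets the bigons shrink, invoking the curvature comparison to rule out any ``slack''. Since this is precisely the content of \cite[Th.\,5.1]{gk13}, I would cite it for the technical core and only sketch the packaging into the two bullet points as above.
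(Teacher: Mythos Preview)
The paper does not prove this lemma at all: it simply states that the result ``is contained in \cite[Th.\,5.1]{gk13}'' and relies on the Toponogov theorem, without further argument. Your proposal is therefore already more detailed than the paper's treatment, and since you explicitly defer the technical core to the same reference \cite[Th.\,5.1]{gk13}, your approach is essentially the same as the paper's.
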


\begin{definition}\label{def:stretch-locus}
The union of~$\widetilde{\Lambda}$ and of the connected components of $\HH^2\smallsetminus\widetilde{\Lambda}$ that are isometrically preserved by all $f\in\mathcal{F}$ is called the \emph{stretch locus} of $(j,\rho)$.
\end{definition}

By \emph{$(j,\rho)$-equivariant} we mean $f(j(\gamma)\cdot p)=\rho(\gamma)\cdot p$ for all $\gamma\in\Gamma$~and $p\in\nolinebreak\HH^2$.
The space $\mathcal{F}$ is always nonempty if $\rho$ is nonelementary \cite[Lem.\,4.11]{gk13}.
If $j$ and~$\rho$ are conjugate under $\PGL$, then the stretch locus of $(j,\rho)$ is the preimage of the convex core of $j(\Gamma)\backslash\HH^2$.

A technical tool for understanding the stretch locus is a procedure for averaging Lipschitz maps (see \cite[\S\,2.5]{gk13}), under which $\Lip_p$ behaves as it would for the barycenter of maps between affine Euclidean spaces.
In Section~\ref{subsec:uniform-Lip}, we shall use this procedure with a partition of unity, as follows.

Let $\psi_0,\dots,\psi_n : \HH^2\rightarrow [0,1]$ be Lipschitz functions inducing a partition of unity on a subset $X$ of~$\HH^2$, subordinated to an open covering $B_0\cup\nolinebreak\ldots\cup\nolinebreak B_n\supset\nolinebreak X$.
For $0\leq i\leq n$, let $\varphi_i : B_i\rightarrow \HH^2$ be a Lipschitz map.
For $p\in X$, let $I(p)$ be the collection of indices $i$ such that $p\in B_i$.
Let $\sum_{i=0}^n \psi_i\,\varphi_i : X\rightarrow\HH^2$ be the map sending any $p\in X$ to the minimizer in~$\HH^2$ of
$$\sum_{i\in I(p)} \psi_i(p) \, d(\,\cdot\, , \varphi_i(p))^2 .$$
The following is contained in \cite[Lem.\,2.13]{gk13}.

\begin{lemma}\label{lem:partofunity}
The averaged map $\varphi:=\sum_{i=0}^n \psi_i\,\varphi_i$ satisfies the ``Leibniz rule''
$$\Lip_p(\varphi)\leq \sum_{i\in I(p)}  \big(\Lip_p(\psi_i)\,R(p) + \psi_i(p)\,\Lip_p(\varphi_i)\big)$$
for all $p\in X$, where $R(p)$ is the diameter of the set $\{\varphi_i(p)\,|\,i\in I(p)\} $.
\end{lemma}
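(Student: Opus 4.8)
The statement to prove is Lemma~\ref{lem:partofunity}, the ``Leibniz rule'' for the local Lipschitz constant of an averaged map built from a partition of unity. Since this is quoted as ``contained in \cite[Lem.\,2.13]{gk13}'', the proof should be a direct computation based on the geometry of the barycenter construction in $\HH^2$; the plan below is how I would reconstruct it.

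\medskip

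\textbf{Plan of proof.} The key point is to compare the barycenter map $p\mapsto\varphi(p)$ to the affine (Euclidean) barycenter and to control two sources of variation separately: the variation of the \emph{weights} $\psi_i(p)$ and the variation of the \emph{target points} $\varphi_i(p)$. First I would fix $p\in X$ and work locally, so that on a small convex neighborhood $\mathcal{U}$ of $p$ the index set $I(q)$ can only shrink, i.e.\ $I(q)\subseteq I(p)$ for $q$ near $p$ (because the $\psi_i$ are continuous and those vanishing at $p$ stay small); extending each $\psi_i\varphi_i$ by the convention that a vanishing weight kills its term, one reduces to estimating, for $q,q'\in\mathcal{U}$, the distance $d(\varphi(q),\varphi(q'))$ in terms of $\sum_{i\in I(p)}\big(|\psi_i(q)-\psi_i(q')|\,R(p) + \psi_i(p)\,d(\varphi_i(q),\varphi_i(q'))\big)$ plus higher-order terms, then divide by $d(q,q')$ and take the $\limsup$.

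\medskip

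The heart of the matter is the following deformation estimate for barycenters in a CAT$(0)$ (indeed CAT$(-1)$) space: if $\mu=\sum w_i\delta_{x_i}$ and $\mu'=\sum w_i'\delta_{x_i'}$ are two probability measures supported on finitely many points, with barycenters $b,b'$, then
\[
d(b,b') \;\leq\; \sum_i w_i\, d(x_i,x_i') \;+\; D\sum_i |w_i-w_i'|,
\]
where $D$ is a diameter bound for the supports. I would prove this in two steps: (i) moving the points $x_i\rightsquigarrow x_i'$ while keeping weights fixed, using that the barycenter map is $1$-Lipschitz in each point with respect to the weighted sum (a standard consequence of convexity of $q\mapsto d(q,\cdot)^2$ and the variance inequality in CAT$(0)$); this gives the term $\sum_i w_i\,d(x_i,x_i')$. (ii) Moving the weights $w_i\rightsquigarrow w_i'$ while keeping points fixed: here one uses that changing a single weight by $\delta$ (and renormalizing) moves the barycenter by at most a multiple of $\delta$ times the diameter of the support — one can see this by differentiating the defining first-order condition for the barycenter, or by a direct comparison with the Euclidean model and the fact that $\HH^2$ has curvature bounded above, so barycenters vary no faster than in Euclidean space. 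Summing the weight changes one-at-a-time yields $D\sum_i|w_i-w_i'|$. Applying this with $w_i=\psi_i(q)$, $x_i=\varphi_i(q)$ and $w_i'=\psi_i(q')$, $x_i'=\varphi_i(q')$, with $D=R(p)$ in the limit $q,q'\to p$, and dividing by $d(q,q')$ gives exactly
\[
\Lip_p(\varphi)\;\leq\;\sum_{i\in I(p)}\Big(\psi_i(p)\,\Lip_p(\varphi_i)+\Lip_p(\psi_i)\,R(p)\Big),
\]
after using upper semicontinuity of $q\mapsto\Lip_q(\varphi_i)$ and continuity of $\psi_i$ to replace values at $q$ by values at $p$, and $R(q)$ by $R(p)$.

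\medskip

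\textbf{Main obstacle.} The delicate point is step (ii): controlling the motion of the barycenter under a change of weights in a space of \emph{variable} (nonzero) curvature, as opposed to the flat case where this is an exact affine computation. The cleanest route is to invoke the Toponogov-type comparison already used in Lemma~\ref{lem:stretchlocus} — namely that in a space with curvature $\le 0$ the weighted second moment $q\mapsto\sum w_i\,d(q,x_i)^2$ is strongly convex with a modulus no smaller than in Euclidean space, so the minimizer is Lipschitz-stable in the weights with the same constant as in $\R^N$. One must also be careful with the renormalization when some $\psi_i(p)=0$: such indices should simply be dropped (they contribute a term $\Lip_p(\psi_i)\cdot 0$ in the bound since one can take $R$ to be the diameter over $I(p)$ only, matching the statement), and one checks that the extended map $\sum\psi_i\varphi_i$ is still well-defined and continuous across the locus where an index enters or leaves $I$, because the corresponding weight goes to $0$ continuously. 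Handling this bookkeeping carefully, together with the two monotone-deformation lemmas above, completes the argument; everything else is routine and already packaged in \cite[\S\,2.5]{gk13}.
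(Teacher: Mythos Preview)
The paper does not actually prove Lemma~\ref{lem:partofunity}: it is stated without proof and attributed to \cite[Lem.\,2.13]{gk13}. So there is no in-paper argument to compare against; the relevant question is only whether your reconstruction is sound.

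Your outline is the standard one and is essentially correct: reduce to a fixed finite index set near~$p$, then bound $d(\varphi(q),\varphi(q'))$ by a triangle-inequality splitting into ``move the points, keep the weights'' and ``move the weights, keep the points''. Step~(i) is the well-known $1$-Lipschitz dependence of the CAT$(0)$ barycenter on its input measure in the Wasserstein sense, which follows from the variance inequality exactly as you say. Step~(ii) is also correct, and your diagnosis of it as the delicate point is apt: in $\HH^2$ one uses strong convexity of $q\mapsto\sum w_i\,d(q,x_i)^2$ (with Euclidean comparison modulus, by nonpositive curvature) to get Lipschitz stability of the minimizer in the weights with constant the diameter of the support. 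Your handling of the bookkeeping (indices leaving $I(q)$ as $q$ moves, vanishing weights killing their term) is the right way to make the limit $q,q'\to p$ rigorous. In short, this is a faithful reconstruction of the argument packaged in \cite[\S\,2.5]{gk13}; nothing is missing.
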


Understanding the stretch locus has led to different equivalent conditions for admissibility in \cite{kasPhD,gk13}:

\begin{theorem}\label{thm:contcont}
Let $\Gamma$ be a torsion-free, finitely generated, discrete group.
A pair $(j,\rho)\in\Hom(\Gamma,\PSL)^2$ is admissible if and only if (up to switching the two factors) $j$ is injective and discrete and $\rho$ is ``uniformly contracting'' compared to~$j$, which means that 
\begin{enumerate}[(i)]
  \item there is a $(j,\rho)$-equivariant Lipschitz map $f : \HH^2\rightarrow\HH^2$ with $\Lip(f)<1$, 
  \item or equivalently that $\rho$ is strictly dominated by~$j$:
  $$\sup_{\gamma\in\Gamma\ \mathrm{with}\ \lambda(j(\gamma))>0}\ \frac{\lambda(\rho(\gamma))}{\lambda(j(\gamma))} < 1 ,$$
  \item or equivalently, if $j(\Gamma)$ does not contain any parabolic element, that $\rho$ is strictly dominated by~$j$ in restriction to simple closed curves:
  $$\sup_{\gamma\in\Gamma_s}\ \frac{\lambda(\rho(\gamma))}{\lambda(j(\gamma))} < 1 ,$$
  where $\Gamma_s$ is the set of nontrivial elements of~$\Gamma$ corresponding to simple closed curves on the surface $j(\Gamma)\backslash\HH^2$.
\end{enumerate}
\end{theorem}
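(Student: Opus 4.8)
The plan is to prove the cycle of implications (i) $\Rightarrow$ admissible $\Rightarrow$ (ii) $\Rightarrow$ (iii) $\Rightarrow$ (i), so that all four statements become equivalent; the standing assumption is that, up to switching the two factors, $j$ is injective and discrete, and I focus on the case where $j$ is moreover convex cocompact without parabolics (the case relevant to the anti-de Sitter application; the general case requires only minor modifications and is treated in \cite{kasPhD,gk13}). That admissibility forces this is part of the properness criterion used below. Two external inputs do most of the work. The first is the Benoist--Kobayashi properness criterion in this rank-one setting: identifying $\AdS^3$ with $\PSL$ acting on itself by left and right translations, the action of $\Gamma^{j,\rho}$ is properly discontinuous if and only if (up to switching) $j$ is convex cocompact and $\rho$ is \emph{coarsely dominated} by $j$, meaning that there are a finite subset $F\subset\Gamma$ and a constant $c<1$ with $d(o,\rho(\gamma)\cdot o)\le c\,d(o,j(\gamma)\cdot o)$ for all $\gamma\notin F$ and some (equivalently, every) basepoint $o\in\HH^2$. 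The second is the ``maximally stretched lamination'' theorem behind Lemma~\ref{lem:stretchlocus} (the general statement being \cite[Th.\,5.1]{gk13}): for $j$ convex cocompact and $\rho$ nonelementary, the space $\mathcal F$ of $(j,\rho)$-equivariant maps realizing the minimal Lipschitz constant $C_0:=\inf_f\Lip(f)$ is nonempty, and, when $j$ has no parabolics,
$$C_0\ =\ \sup_{\gamma\,:\,\lambda(j(\gamma))>0}\ \frac{\lambda(\rho(\gamma))}{\lambda(j(\gamma))}\ =\ \sup_{\gamma\in\Gamma_s}\ \frac{\lambda(\rho(\gamma))}{\lambda(j(\gamma))} .$$
(If $\rho$ is elementary the statement is checked directly: when $\rho$ has bounded image it fixes a point of $\HH^2$, a constant map is $(j,\rho)$-equivariant, and (i)--(iii) and admissibility hold trivially; the remaining elementary cases are as in \cite{kasPhD}.)

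For (i) $\Rightarrow$ admissible: if $f$ is $(j,\rho)$-equivariant with $\Lip(f)=c<1$, then, using that each $\rho(\gamma)$ is an isometry and $f(j(\gamma)\cdot o)=\rho(\gamma)\cdot f(o)$,
$$d(o,\rho(\gamma)\cdot o)\ \le\ 2\,d(o,f(o))+d\big(f(o),f(j(\gamma)\cdot o)\big)\ \le\ 2\,d(o,f(o))+c\,d(o,j(\gamma)\cdot o),$$
and since $d(o,j(\gamma)\cdot o)\to\infty$ as $\gamma\to\infty$ this is a coarse domination, whence properness by the criterion above. For admissible $\Rightarrow$ (ii): the criterion yields a coarse domination, and for a fixed $\gamma$ with $\lambda(j(\gamma))>0$ the powers $\gamma^n$ eventually avoid the finite set $F$, so $d(o,\rho(\gamma^n)\cdot o)\le c\,d(o,j(\gamma^n)\cdot o)$; dividing by $n$ and letting $n\to\infty$, with $\tfrac1n\,d(o,g^n\cdot o)\to\lambda(g)$, we get $\lambda(\rho(\gamma))\le c\,\lambda(j(\gamma))$, which is (ii). The implication (ii) $\Rightarrow$ (iii) is immediate, since $\Gamma_s$ consists of nontrivial, hence (for $j$ without parabolics) $j$-hyperbolic, elements.

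The crux is (iii) $\Rightarrow$ (i). Granting the maximally stretched lamination theorem, (iii) says $C_0=\sup_{\gamma\in\Gamma_s}\lambda(\rho(\gamma))/\lambda(j(\gamma))<1$, and since the infimum is attained on the nonempty set $\mathcal F$, any $f\in\mathcal F$ has $\Lip(f)=C_0<1$, which is (i). It remains to indicate why $C_0$ is detected by simple closed curves. The inequality $C_0\ge\sup_\gamma\lambda(\rho(\gamma))/\lambda(j(\gamma))$ follows by choosing, for $\gamma$ with $\lambda(j(\gamma))>0$, a point $x$ on the translation axis of $j(\gamma)$ and any $f\in\mathcal F$, so that $\lambda(\rho(\gamma))\le d\big(f(x),\rho(\gamma)\cdot f(x)\big)=d\big(f(x),f(j(\gamma)\cdot x)\big)\le\Lip(f)\,\lambda(j(\gamma))$. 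For the reverse inequality one feeds $\mathcal F$ into Lemma~\ref{lem:stretchlocus} --- after rescaling the target metric by $1/C_0$ so that the minimal Lipschitz constant becomes $1$ --- to obtain a nonempty $j(\Gamma)$-invariant geodesic lamination $\widetilde\Lambda$ whose leaves are isometrically preserved by every $f\in\mathcal F$ (equivalently, scaled by exactly $C_0$ for the original metric); projecting $\widetilde\Lambda$ to the compact surface $j(\Gamma)\backslash\HH^2$, picking a minimal component $\Lambda_0$, and approximating $\Lambda_0$ by simple closed geodesics $\gamma_n$ lying in shrinking neighborhoods of $\Lambda_0$, one checks --- using that the maps of $\mathcal F$ are isometric along leaves of $\widetilde\Lambda$, and Remark~\ref{rem:local-Lip} to bound the contribution of the short transverse excursions of $\gamma_n$ --- that $\lambda(\rho(\gamma_n))/\lambda(j(\gamma_n))\to C_0$, so that $\sup_{\Gamma_s}\ge C_0$ and equality holds. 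The main obstacle is precisely this maximally stretched lamination theorem: both the existence and regularity of $\widetilde\Lambda$ (which rests on the Toponogov comparison theorem, as in Lemma~\ref{lem:stretchlocus}) and its exploitation to pass from translation-length domination to a genuine global contraction. The complete argument is carried out in \cite{gk13,kasPhD}.
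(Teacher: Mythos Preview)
Your proof is correct and matches the paper's own treatment, which does not prove Theorem~\ref{thm:contcont} in full but cites \cite{kasPhD,gk13} and remarks that $(i)\Rightarrow(ii)\Rightarrow(iii)$ follow from Remark~\ref{rem:Clambda-CLip} while $(iii)\Rightarrow(i)$ is the nontrivial step relying on Lemma~\ref{lem:stretchlocus}. Your cycle $(i)\Rightarrow\text{admissible}\Rightarrow(ii)$ is a mild reorganization --- the paper's route $(i)\Rightarrow(ii)$ is direct via Remark~\ref{rem:Clambda-CLip} --- but the substance, and in particular the handling of $(iii)\Rightarrow(i)$ through the maximally stretched lamination, is the same.

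One small slip in your sketch of $(iii)\Rightarrow(i)$: ``rescaling the target metric by $1/C_0$'' does not reduce to the hypothesis of Lemma~\ref{lem:stretchlocus}, since a rescaled $\HH^2$ is no longer $\HH^2$ (the curvature changes). The correct move --- which you in fact also invoke --- is to appeal directly to the more general \cite[Th.\,5.1]{gk13}, which provides the stretch lamination whenever $C_0\geq 1$; one then argues by contradiction (if $C_0\geq 1$, approximating the lamination by simple closed curves forces $\sup_{\Gamma_s}\geq C_0\geq 1$, contradicting~(iii)). Since you explicitly defer the complete argument to \cite{gk13,kasPhD}, this is an expository slip rather than a genuine gap.
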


The implications $(i)\Rightarrow (ii)\Rightarrow (iii)$ are immediate modulo the following easy remark (see \cite[Lem.\,4.5]{gk13}); the implication $(iii)\Rightarrow (i)$ is nontrivial and relies on Lemma~\ref{lem:stretchlocus}.

\begin{remark}\label{rem:Clambda-CLip}
Let $\Gamma$ be a discrete group and $(j,\rho)\in\Hom(\Gamma,\PSL)^2$ a pair of representations.
For any $\gamma\in\Gamma$ and any $(j,\rho)$-equivariant Lipschitz map $f : \HH^2\rightarrow\HH^2$,
$$\lambda(\rho(\gamma)) \leq \Lip(f) \, \lambda(j(\gamma)) .$$
\end{remark}

\subsection{Folded hyperbolic structures} \label{subsec:reminders-fold}

Let $\Sigma$ be a connected, oriented surface of negative Euler characteristic, possibly with boundary, and let $\Gamma=\pi_1(\Sigma)$ be its fundamental group.
Recall from \cite[\S\,7]{bon96} that a \emph{pleated hyperbolic structure} on~$\Sigma$ is a quadruple $(j,\rho,\Upsilon,f)$ where
\begin{itemize}
  \item $j\in\Hom(\Gamma,\PSL)$ is the holonomy of a hyperbolic structure on~$\Sigma$;
  \item $\rho\in\Hom(\Gamma,\mathrm{PSL}(2,\C))$ is a representation;
  \item $\Upsilon$ is a geodesic lamination on~$\Sigma$;
  \item $f : \HH^2\rightarrow\nolinebreak\HH^3$ is a $(j,\rho)$-equivariant, continuous map whose restriction to any connected component of $\HH^2\smallsetminus\widetilde{\Upsilon}$ is an isometric embedding.
  (Here we denote by $\widetilde{\Upsilon}\subset\HH^2$ the preimage of $\Upsilon\subset\Sigma\simeq j(\Gamma)\backslash\HH^2$.)
\end{itemize}
The representation $\rho$ is called the \emph{holonomy} of the pleated hyperbolic structure.
The closures of connected components of $\HH^2\smallsetminus\widetilde{\Upsilon}$ are called the \emph{plates} (or \emph{tiles}).
Note that $f$ is $1$-Lipschitz.
For any $g,h\in\PGL$,
$$\big(gj(\cdot)g^{-1} , h\rho(\cdot)h^{-1} , \Upsilon , h\circ f\circ g^{-1}\big)$$
is still a pleated hyperbolic structure on~$\Sigma$; we shall say that $h\rho(\cdot)h^{-1}$ is a \emph{folding} of $gj(\cdot)g^{-1}$.
We make the following observation.

\begin{observation}\label{obs:pleated-nonab}
Suppose that $\Sigma$ is compact (without boundary).
For any pleated hyperbolic structure $(j,\rho,\Upsilon,f)$ on~$\Sigma$, the~group $\rho(\Gamma)$ is nonabelian.
\end{observation}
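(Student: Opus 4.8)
The plan is to argue by contradiction: assume $\rho(\Gamma)$ is abelian and derive a contradiction with the fact that $\Sigma$ is a closed surface of negative Euler characteristic. The key point is that an abelian subgroup of $\PSL(2,\mathbb{C})$ is \emph{elementary} — it fixes a point in $\HH^3\cup\partial_\infty\HH^3$ or preserves a geodesic — and in particular its action on $\HH^3$ has a small limit set (at most two points). I would first reduce to the case where $\rho$ is nontrivial: if $\rho$ is trivial then $f$ is $j(\Gamma)$-invariant, hence descends to a map from the closed surface $\Sigma\simeq j(\Gamma)\backslash\HH^2$ to $\HH^3$ whose restriction to each plate is an isometric embedding; but a closed surface has no such structure since globally it would force the image to be a closed subset of $\HH^3$ of dimension $2$ that is locally totally geodesic, and following around a non-separating loop one sees this is impossible — alternatively, simply note the image of $f$ would be a compact totally geodesic-by-pieces set on which $\Sigma$ maps, yet any closed hyperbolic surface has loops of arbitrarily large $j$-translation length mapping to loops of bounded length, contradicting that $f$ is an isometry on each plate together with $\rho$ being trivial.

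The cleaner route, and the one I would actually write, is this. Since $f$ is $1$-Lipschitz and $(j,\rho)$-equivariant, Remark \ref{rem:Clambda-CLip} gives $\lambda(\rho(\gamma))\leq\lambda(j(\gamma))$ for all $\gamma$, but more importantly the equivariance forces $\rho$ to be nonelementary whenever $j$ is: I would show that if $\rho(\Gamma)$ were abelian (hence elementary, with limit set $\Lambda_\rho$ of cardinality $\leq 2$), then the image $f(\HH^2)$ would have to accumulate only on $\Lambda_\rho$ in $\partial_\infty\HH^3$. Now pick any plate $\widetilde{P}$, a closure of a connected component of $\HH^2\smallsetminus\widetilde{\Upsilon}$; it is a convex subset of $\HH^2$, and $f|_{\widetilde{P}}$ is an isometric embedding onto a convex subset of a totally geodesic $\HH^2\subset\HH^3$. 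Because $\Sigma$ is closed, at least one plate $\widetilde P$ is noncompact (indeed $\widetilde\Upsilon$ is a geodesic lamination in $\HH^2$, so its complementary regions are ideal polygons or have infinite area in general), and the ideal boundary $\partial_\infty\widetilde P$ is an infinite subset — in fact uncountable, or at least containing more than two points — of $\partial_\infty\HH^2$. An isometric embedding of $\HH^2$ into $\HH^3$ extends continuously to ideal boundaries, so $f(\partial_\infty\widetilde P)\subset\partial_\infty\HH^3$ has the same cardinality as $\partial_\infty\widetilde P$, which exceeds $2$.

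The final step is to locate these ideal endpoints inside the limit set $\Lambda_\rho$, giving $|\Lambda_\rho|>2$, contradicting abelianness. Here I would use that $\Sigma$ is closed so that $j(\Gamma)$ is a cocompact lattice: the ideal points of $\widetilde P$ are limits of sequences $j(\gamma_n)\cdot x_0$ for suitable $\gamma_n\in\Gamma$ and a fixed basepoint $x_0$ (cocompactness lets us push any boundary point of a plate to infinity along the group orbit, up to bounded error), and applying $f$ and using $(j,\rho)$-equivariance together with $f$ being $1$-Lipschitz — hence sending the orbit $j(\Gamma)\cdot x_0$ within bounded distance of the orbit $\rho(\Gamma)\cdot f(x_0)$ — we get that $f$ of these ideal points lie in the closure of $\rho(\Gamma)\cdot f(x_0)$, i.e.\ in $\Lambda_\rho\cup(\rho(\Gamma)\cdot f(x_0))$; since there are more than two and $\rho(\Gamma)$ is discrete-or-bounded as a subgroup of the elementary group, only finitely many can fail to be in $\Lambda_\rho$, so $|\Lambda_\rho|\geq 3$. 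This is the contradiction. The main obstacle is the third step — making precise that the ideal endpoints of a plate are ``seen'' by the group orbit and survive under the $1$-Lipschitz map $f$ into the limit set of $\rho$; this uses cocompactness of $j(\Gamma)$ in an essential way (it is false for surfaces with boundary, where the observation indeed fails), and one has to be slightly careful that a single plate may have few ideal points, so one should rather take the union of $\Gamma$-translates of plates, whose ideal boundaries cover a $j(\Gamma)$-invariant, hence dense, subset of $\partial_\infty\HH^2$, forcing $\Lambda_\rho$ to be infinite.
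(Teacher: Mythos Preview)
Your approach is correct and essentially the same as the paper's: both show that the limit set of $\rho(\Gamma)$ has at least three points by taking ideal vertices of a plate, using cocompactness of $j(\Gamma)$ to approximate them by $j$-orbit points, and transferring via the $(j,\rho)$-equivariance and $1$-Lipschitz property of~$f$. The paper's version is more direct---it simply fixes a nondegenerate ideal triangle $T$ inside one plate (such a triangle always exists, since every complementary region of a geodesic lamination on a closed hyperbolic surface lifts to a convex set with at least three ideal points, so your final hedge about passing to unions of translates is unnecessary) and runs the argument on its three vertices, without the contradiction framing.
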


\begin{proof}
Consider a nondegenerate ideal triangle $T$ of~$\HH^2$ entirely contained in one plate.
Let $(p_n)_{n\in\N}$ be a sequence of points of~$T$ going to infinity.
Since $\Sigma$ is compact, there exist $R>0$ and a sequence $(\gamma_n)_{n\geq 1}$ of elements of~$\Gamma$ such that $d(p_n,j(\gamma_n)\cdot p_0)\leq R$ for all $n\in\N$.
Since $f$ is $(j,\rho)$-equivariant and $1$-Lipschitz,
$$d(\rho(\gamma_n)\cdot f(p_0),f(p_n)) \leq d(j(\gamma_n)\cdot p_0,p_n) \leq R$$
for all $n\in\N$.
Applying this to sequences $(p_n)$ converging to the three ideal vertices of~$T$, and using the fact that the restriction of $f$ to~$T$ is an isometry, we see that the limit set of $\rho(\Gamma)$ contains at least three points.
In particular, $\rho(\Gamma)$ is nonabelian.
\end{proof}

We shall also use the following elementary remark.

\begin{remark}\label{rem:spiral-boundary}
Let $(j,\rho,\Upsilon,f)$ be a pleated hyperbolic structure on~$\Sigma$.
If some leaf of~$\Upsilon$ spirals to a boundary component of~$\Sigma$ corresponding to an element $\gamma\in\Gamma$, then $\lambda(j(\gamma))=\lambda(\rho(\gamma))$, where $\lambda : \mathrm{PSL}(2,\C)\rightarrow\R^+$ is the translation length function in~$\HH^3$ extending \eqref{eqn:deflambda}.
\end{remark}

Any pleated hyperbolic structure $(j,\rho,\Upsilon,f)$ on~$\Sigma$ defines a \emph{bending cocycle}, \ie a map $\beta$ from the set of pairs of plates to $\R/2\pi\Z$ which is symmetric and additive:
$$\beta(P,Q)=\beta(Q,P) \quad\mathrm{and}\quad \beta(P,Q)+\beta(Q,R)=\beta(P,R)$$
for all plates $P,Q,R$.
Intuitively, $\beta(P,Q)$ is the total angle of pleating encountered when traveling from $f(P)$ to $f(Q)$ along $f(\HH^2)$ in~$\HH^3$.
Conversely, to any bending cocycle, Bonahon associates a pleated surface (see \cite[\S\,8]{bon96}).

In this paper we consider a special case of pleated surfaces $(j,\rho,\Upsilon,f)$, namely those for which $f$ takes values in a copy of $\HH^2$ inside~$\HH^3$ (\ie a totally geodesic plane) and $\rho$ takes values in $\Isom^+(\HH^2)=\PSL$.
In this case, we speak of a {\em folded hyperbolic structure} on~$\Sigma$.
The map $f$ defines a \emph{coloring} of $\Sigma\smallsetminus\Upsilon$, namely a $j(\Gamma)$-invariant function $\widetilde{c}$ from the set of plates to $\{ -1,1\}$: we set $\widetilde{c}(P)=1$ if the restriction of $f$ to $P$ is orientation-preserving, and $\widetilde{c}(P)=-1$ otherwise.
Note that the bending cocycle of a folded hyperbolic structure is valued in $\{0,\pi\}$: for all plates $P$ and~$Q$,
\begin{equation}\label{eqn:cocycle-coloring}
\beta(P,Q) = \frac{1 - \widetilde{c}(P) \, \widetilde{c}(Q)}{2} \, \pi \ \in \{ 0,\pi\} .
\end{equation}
The coloring $\widetilde{c}$ descends to a continuous, locally constant function $c$ from $\Sigma\smallsetminus\nolinebreak \Upsilon$ to $\{ -1,1\} $.
Conversely, any such function, after lifting it to a coloring $\widetilde{c}$ from the set of connected components of $\HH^2\smallsetminus\widetilde{\Upsilon}$ to $\{ -1,1\} $, defines a bending cocycle on $\HH^2\smallsetminus\widetilde{\Upsilon}$ by the formula \eqref{eqn:cocycle-coloring}; this bending cocycle, in turn, defines a folded hyperbolic structure on~$\Sigma$ by the work of Bonahon \cite{bon96}.

\subsection{The Euler class}\label{subsec:Euler}

We now give a brief introduction to the Euler class, along the lines of \cite[\S\,2.3.3]{wol11}.
For details and complements we refer to \cite{ghy01} or \cite[\S\,2]{cal04}.

As in the introduction, let $\Sigma_g$ be a closed, connected, oriented surface of genus $g\geq 2$, with fundamental group~$\Gamma_g$.
The Euler class of a representation $\rho\in\Hom(\Gamma_g,\PSL)$ measures the obstruction to lifting $\rho$ to the universal cover $\widetilde{\mathrm{PSL}}(2,\R)$ of $\PSL$; its parity measures the obstruction to lifting $\rho$ to $\SL$.
To define it, choose a set-theoretic section $s$ of the covering map $\widetilde{\mathrm{PSL}}(2,\R)\rightarrow\PSL$.
Consider a triangulation of~$\Sigma_g$ with a vertex at the basepoint~$x_0$ defining $\Gamma_g=\pi_1(\Sigma_g)$.
Choose a maximal tree in the $1$-skeleton of the triangulation, and for every oriented edge $\sigma$ in this tree, set $\rho(\sigma):=1\in\PSL$.
Any other oriented edge $\sigma'$ corresponds (by closing up in the unique possible way along the rooted tree) to an element $\gamma\in\Gamma_g$; we set $\rho(\sigma'):=\rho(\gamma)\in\PSL$.
The boundary of any oriented triangle $\tau$ of the triangulation can be written as $\sigma_1\sigma_2\sigma_3$ where $\sigma_1,\sigma_2,\sigma_3$ are oriented edges; we set
$$\mathrm{eu}(\rho)(\tau) := s(\rho(\sigma_1)) \, s(\rho(\sigma_2)) \, s(\rho(\sigma_3)).$$
Summing over triangles $\tau$, this defines an element of $H^2(\Sigma_g,\pi_1(\PSL))$, hence an element of $H^2(\Sigma_g,\Z)$ under the identification $\pi_1(\PSL)\simeq\Z$.
This element $\mathrm{eu}(\rho)\in H^2(\Sigma_g,\Z)$ is called the \emph{Euler class} of~$\rho$.
Its evaluation on the fundamental class in $H_2(\Sigma_g,\Z)$ is an integer, which we still call the Euler class of~$\rho$.
It is invariant under conjugation by $\PSL$, and changes sign under conjugation by $\PGL\smallsetminus\PSL$.

We can also define the Euler class for representations of the fundamental group of a compact, connected, oriented surface $\Sigma$ with boundary, of negative Euler characteristic, when the boundary curves are sent to hyperbolic elements. Indeed, any hyperbolic element $g\in\PSL$ has a canonical lift to $\widetilde{\mathrm{PSL}}(2,\R)$: it belongs to a unique one-parameter subgroup of $\PSL$, which defines a path from the identity to~$g$.
Choose a section $s$ of the projection $\widetilde{\mathrm{PSL}}(2,\R)\rightarrow\PSL$ that maps any hyperbolic element to its canonical lift.
Then the construction above, using triangulations of~$\Sigma$ containing exactly one vertex on each boundary component, defines an Euler class, independent of all choices.

For instance, let $\Sigma$ be an oriented pair of pants with fundamental group $\Gamma=\langle\alpha,\beta,\gamma\,|\,\alpha\beta\gamma=1\rangle$, where $\alpha,\beta,\gamma$ correspond to the three boundary curves, endowed with the orientation induced by the surface.
For any representation $\rho\in\Hom(\Gamma,\PSL)$ with $\rho(\alpha),\rho(\beta),\rho(\gamma)$ hyperbolic,
\begin{equation}\label{eqn:Euler-nb-pants}
\mathrm{eu}(\rho) = s(\rho(\alpha)) \, s(\rho(\beta)) \, s(\rho(\gamma)) \in Z(\widetilde{\mathrm{PSL}}(2,\R)) \, \simeq \, \Z .
\end{equation}
In particular, $\mathrm{eu}(\rho) \in \{-1,0,1\}$, and $|\mathrm{eu}(\rho)|=1$ if and only if $\rho$ is the holonomy of a hyperbolic structure on~$\Sigma$, after possibly reversing the orientation.
If $s'$ is a section of the projection $\SL\rightarrow\PSL$ that maps any hyperbolic element to its lift of positive trace, then \eqref{eqn:Euler-nb-pants} implies
\begin{equation} \label{eqn:eulermod2}
s'(\rho(\alpha)) \, s'(\rho(\beta)) \, s'(\rho(\gamma)) = (-\mathrm{Id})^{\mathrm{eu}(\rho)} .
\end{equation}

By construction, the Euler class is \emph{additive}: if $\Sigma$ is the union of two subsurfaces $\Sigma'$ and $\Sigma''$ glued along curves $\gamma_i$, and if $\rho\in\Hom(\pi_1(\Sigma),\PSL)$ is a representation sending all the curves $\gamma_i$ (and the boundary curves of $\Sigma$, if any) to hyperbolic elements of $\PSL$, then $\mathrm{eu}(\rho)$ is the sum of the Euler classes of the restrictions of $\rho$ to the fundamental groups of $\Sigma'$ and~$\Sigma''$.
This implies that a folded hyperbolic structure defined by a coloring $c$ from the set $\mathcal{P}$ of connected components of $\Sigma\smallsetminus\Upsilon$ to $\{ -1,1\}$ has Euler class $\frac{1}{2\pi}\sum_{P\in\mathcal{P}} c(P)\,\mathcal{A}(P)$ where $\mathcal{A}(P)$ is the area of~$P$.

We shall use the following terminology.

\begin{definition}\label{def:geomrep}
A representation $\rho\in\Hom(\pi_1(\Sigma),\PSL)$ is {\em geometric} if it maps the boundary curves of~$\Sigma$ to hyperbolic elements of $\PSL$ and has extremal Euler class or, equivalently, if it is the holonomy of a hyperbolic structure on~$\Sigma$, after possibly reversing the orientation.
\end{definition}

\subsection{Laminations in a pair of pants}\label{subsec:triskelion}

A hyperbolic pair of pants $\Sigma$ carries only finitely many geodesic laminations, because only 21 geodesics are simple: $3$ closed geodesics (the boundary components), $6$ geodesics spiraling from a boundary component to itself, and $12$ geodesics spiraling from a boundary component to another.
It admits 32 ideal triangulations: 24 of them contain a geodesic spiraling from a boundary component to itself, and the other 8 do not (see Figure~\ref{fig:laminations}).
We shall call the laminations corresponding to these 8 triangulations the \emph{triskelion} laminations of~$\Sigma$; they differ by the spiraling directions of the spikes of the triangles at each boundary component.

\begin{figure}[ht!]
\centering
\includegraphics[width=9cm]{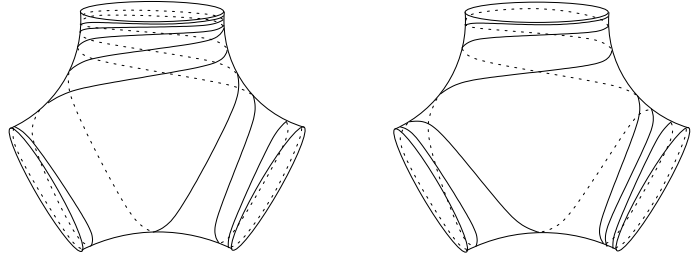}
\caption{A pair of pants carries 24 maximal geodesic laminations containing a geodesic spiraling from a boundary component to itself (left), and 8 triskelion laminations (right).}
\label{fig:laminations}
\end{figure}

\section{Holonomies of folded hyperbolic structures}\label{sec:folded}

Let $\lambda : \PSL\rightarrow\R^+$ be the translation length function \eqref{eqn:deflambda}.
For any representation $\rho\in\Hom(\Gamma_g,\PSL)$, we set
$$\lambda_{\rho} := \lambda\circ\rho :\ \Gamma_g \longrightarrow \R^+ .$$
The function $\lambda_{\rho}$ is identically zero if and only if the group $\rho(\Gamma_g)$ is unipotent or bounded.
The goal of this section is to prove the following.

\begin{proposition}\label{prop:signs}
For any $[\rho]\in\Repnfd$ with $\lambda_{\rho}\not\equiv 0$, there exist elements $[j_0],[j'_0]\in\Repfd$ and a decomposition of~$\Sigma_g$ into pairs of pants, each labeled $-1$, $0$, or~$1$, with the following properties:
\begin{enumerate}
  \item for any representations $j_0,\rho$ in the respective classes $[j_0],[\rho]$, there is a $1$-Lipschitz, $(j_0,\rho)$-equivariant map $f : \HH^2\rightarrow\HH^2$ that is an orientation-preserving (\resp orientation-reversing) isometry in restriction to any connected subset of~$\HH^2$ projecting to a union of pants labeled $1$ (\resp $-1$) in $j_0(\Gamma_g)\backslash\HH^2\simeq\Sigma_g$, and that satisfies $\Lip_p(f)<1$ for any $p\in\HH^2$ projecting to the interior of a pair of pants labeled~$0$;
  \item for any representations $j'_0,\rho$ in the respective classes $[j'_0],[\rho]$, if $\rho$ is nonabelian, then it is a folding of~$j'_0$ along a lamination $\Upsilon$ of~$\Sigma_g$ consisting of all the cuffs together with a triskelion lamination inside each pair of pants labeled~$0$; the coloring $c : \Sigma_g\smallsetminus\Upsilon\rightarrow\{ -1,1\}$ takes the value $1$ (\resp $-1$) on each pair of pants labeled $1$ (\resp $-1$), and both values on each pair of pants labeled~$0$;
  \item $[j_0]$ and~$[j'_0]$ only differ by earthquakes along the cuffs of the pairs of pants of the decomposition.
\end{enumerate}
\end{proposition}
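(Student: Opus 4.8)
If $\rho(\Gamma_g)$ is abelian (hence non-Fuchsian) with $\lambda_\rho\not\equiv 0$, then $\rho$ factors through a homomorphism $L\colon\Gamma_g\to\R$, and the statement --- with every pair of pants labeled $0$ and $[j_0]=[j_0']$ --- reduces to the existence of a hyperbolic structure on $\Sigma_g$ carrying a closed $1$-form of supremum norm $<1$ representing the class of $L$; this is the abelian instance of (the first part of) Theorem~\ref{thm:domin}, which I would dispose of separately by an elementary argument, integrating such a form to a $(j_0,\rho)$-equivariant map into a translation axis. From now on I assume $\rho$ nonabelian. First I would invoke the Gallo--Kapovich--Marden theorem \cite{gkm00} to fix a pants decomposition $\mathcal{P}$ of $\Sigma_g$, with cuffs $\gamma_1,\dots,\gamma_{3g-3}$, such that for every pair of pants $P$ the representation $\rho|_{\pi_1(P)}$ is nonabelian and sends each cuff of $P$ to a hyperbolic element. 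By \eqref{eqn:Euler-nb-pants}, $\mathrm{eu}(\rho|_{\pi_1(P)})\in\{-1,0,1\}$; label $P$ by this integer, and let $\eta_P$ be the holonomy of the hyperbolic structure on $P$ --- unique, since a pair of pants is rigid --- with cuff lengths $\lambda(\rho(\delta))$, $\delta$ running over the cuffs of $P$. If $P$ is labeled $\pm1$, then $\rho|_{\pi_1(P)}$ is geometric and conjugate to $\eta_P$ for the given (\resp reversed) orientation; I attach to $P$ the empty interior lamination and the constant coloring $\pm1$. If $P$ is labeled $0$, then Lemma~\ref{lem:pantsfold} realizes $\rho|_{\pi_1(P)}$ as a folding of $\eta_P$ along a triskelion lamination $\Lambda_P$ of $P$, for a coloring $c_P$ giving its two ideal-triangle plates opposite signs; moreover the leaves of $\Lambda_P$ spiral to the cuffs, so Remark~\ref{rem:spiral-boundary} gives $\lambda(\rho(\delta))=\lambda(\eta_P(\delta))$ for each cuff $\delta$ of $P$, in accordance with the definition of $\eta_P$.

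Since the two pants adjacent to any cuff $\gamma_k$ assign it the same length, the structures $\eta_P$ glue, for any choice of Fenchel--Nielsen twists $t=(t_k)$, into a hyperbolic structure on $\Sigma_g$ with holonomy $j_0'(t)$. Together with the lamination $\Upsilon$ consisting of all the cuffs and all the $\Lambda_P$, and with the coloring $c$ built from the $c_P$ and the constants $\pm1$, this defines --- through \eqref{eqn:cocycle-coloring} and Bonahon's construction \cite{bon96} --- a folded hyperbolic structure whose holonomy $\rho'(t)$ restricts, on each $\pi_1(P)$ and for every $t$, to a representation conjugate to $\rho|_{\pi_1(P)}$. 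The cuff lengths of $j_0'(t)$ being now forced, the twist of $\rho'(t)$ along $\gamma_k$ depends monotonically and bijectively on $t_k$ --- an earthquake of $j_0'$ of magnitude $s$ along $\gamma_k$ shifts it by $\pm s$, the sign depending on whether $c$ is locally constant across $\gamma_k$ --- so there is a unique $t^\circ$ for which $\rho'(t^\circ)$ is globally conjugate to $\rho$ (two representations with hyperbolic cuffs, conjugate restrictions to each $\pi_1(P)$ and equal twists, being globally conjugate). I set $[j_0']:=[j_0'(t^\circ)]$; this proves~(2), the developing map of the folded structure being a $1$-Lipschitz $(j_0',\rho)$-equivariant map, isometric on each plate.

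For~(1) I would build the map $f$ pants by pants and then glue. On a $\pm1$-pants, take for $f|_{\widetilde{P}}$ the orientation-preserving (\resp orientation-reversing) isometry conjugating $\rho|_{\pi_1(P)}$ to $\eta_P$. On a $0$-pants $P$, the pair $(\eta_P,\rho|_{\pi_1(P)})$ has minimal Lipschitz constant $1$ --- at least $1$ by Remark~\ref{rem:Clambda-CLip} applied to a cuff, at most $1$ because $\rho|_{\pi_1(P)}$ is a folding of $\eta_P$ --- and this infimum is attained, so Lemma~\ref{lem:stretchlocus} applies. The crucial claim is that the stretch locus of $(\eta_P,\rho|_{\pi_1(P)})$ is exactly the union of the three boundary cuff axes together with their $\eta_P(\pi_1(P))$-translates. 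The inclusion $\supseteq$ is forced by $\lambda$-matching on the cuffs; for $\subseteq$, any leaf of the stretch lamination would be a simple geodesic of $P$ isometrically preserved by the triskelion-folding map --- a map that sends a simple geodesic of $P$ to a geodesic only when that geodesic is a cuff or a leaf of $\Lambda_P$ --- so the only possibility to exclude is that a leaf of $\Lambda_P$ lie in the stretch lamination. \emph{This exclusion is the main obstacle}: one must produce an optimal $1$-Lipschitz map that strictly contracts across every triskelion leaf. I expect to obtain it by realizing $\rho|_{\pi_1(P)}$ as a folding of $\eta_P$ along a second triskelion disjoint from $\Lambda_P$ and averaging the two folding maps through the procedure of \cite[\S\,2.5]{gk13}, so that the two optimal maps are simultaneously isometric (on simple geodesics) only along the boundary cuffs; should this second realization be unavailable, a direct analysis of the stretch locus via Lemma~\ref{lem:stretchlocus} would be needed instead.

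Granting this claim, Lemma~\ref{lem:stretchlocus} furnishes, on each $0$-pants $P$, a $1$-Lipschitz $(\eta_P,\rho|_{\pi_1(P)})$-equivariant map with $\Lip_p<1$ throughout the interior of $P$, still an isometry on each cuff axis. It remains to choose the Fenchel--Nielsen twists of a hyperbolic structure $[j_0]$ on $\Sigma_g$, glued from the same pants structures $\eta_P$, so that the two adjacent pants maps agree along each common cuff axis; they then patch to a $(j_0,\rho)$-equivariant map $f$, which is $1$-Lipschitz --- a geodesic joining points of two adjacent pants crosses their common cuff axis, so its local Lipschitz constants are bounded by those of the two pieces --- which is an isometry on every $\pm1$-pants and satisfies $\Lip_p(f)<1$ on the interior of every $0$-pants. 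This establishes~(1), and~(3) is then immediate since $[j_0]$ and $[j_0']$ are glued from the same pants structures and so differ only by earthquakes along the cuffs.
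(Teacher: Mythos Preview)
Your overall strategy matches the paper's: find a pants decomposition adapted to~$\rho$, label each pair of pants by the Euler class of $\rho|_{\pi_1(P)}$, realize the $0$-labeled restrictions as triskelion foldings of the geometric representation with the same cuff lengths (Lemma~\ref{lem:pantsfold}), identify the stretch locus on each $0$-pants with its boundary, and then glue after earthquake corrections. The paper packages the stretch-locus step as Corollary~\ref{cor:Lip-pants}.

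There is, however, a genuine gap. You invoke \cite{gkm00} to obtain the pants decomposition, but Gallo--Kapovich--Marden applies only to \emph{nonelementary} representations. A nonabelian $\rho$ can still be elementary (fixing a point of $\partial_\infty\HH^2$ with nontrivial unipotent part); for such~$\rho$ the existence of a pants decomposition with hyperbolic cuffs \emph{and} nonabelian restriction to every pair of pants requires a separate argument. The paper supplies this in Lemma~\ref{lem:findloxo}: it first finds a decomposition with hyperbolic cuffs via the abelianization and a homological argument, then makes every pair of pants nondiagonalizable by iterated Dehn twists along curves in four-holed spheres. Without this, your construction does not cover all nonabelian~$\rho$.

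Two smaller points. First, ``a second triskelion disjoint from $\Lambda_P$'' is not the right condition: any two triskelions in a pair of pants meet. What you need is that every leaf $\ell$ of $\Lambda_P$ fails to be isometrically preserved by \emph{some} optimal map. The paper's argument is cleaner and done leaf by leaf: any spiraling geodesic $\ell$ lies in exactly two of the eight triskelions, while $\rho|_{\pi_1(P)}$ is realized by folding along at least four of them (Lemma~\ref{lem:pantsfold}); hence some realizing triskelion omits~$\ell$, and the corresponding folding map fails to preserve $\ell$ isometrically. Lemma~\ref{lem:stretchlocus} then directly yields the desired map with $\Lip_p<1$ on the interior. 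Second, your treatment of the abelian case is only a sketch and is logically delicate (you reduce it to the abelian instance of Theorem~\ref{thm:domin}, which in the paper is proved \emph{using} Proposition~\ref{prop:signs}). The paper avoids this by handling abelian~$\rho$ uniformly: Lemma~\ref{lem:findloxo} still gives hyperbolic cuffs, and Corollary~\ref{cor:Lip-pants} covers the case where $\rho|_{\Gamma^P}$ is abelian by postcomposing the nonabelian folding map with a horospherical projection onto the axis of~$\rho(\Gamma^P)$.
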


Property~(1) is used to prove Theorem~\ref{thm:domin} in Section~\ref{sec:surj}, while property~(2) is a more precise statement of Theorem~\ref{thm:nfdfolded}.
We refer to Section~\ref{subsec:rem-Lip} for the notation $\Lip_p(f)$ and to Section~\ref{subsec:triskelion} for triskelion laminations.
By additivity (see Section~\ref{subsec:Euler}), the Euler class of~$\rho$ is the sum of the labels of the pairs of pants.

Proposition~\ref{prop:signs} is proved by choosing an appropriate pants decomposition (Section~\ref{subsec:constr}) and understanding the representations of the fundamental group of a pair of pants (Section~\ref{subsec:pants}); these ingredients are brought together in Section~\ref{subsec:provesigns}.
In Section~\ref{subsec:uniform-Lip} we present a variation on Proposition~\ref{prop:signs}.(1), which is later used to prove the second statement of Theorem~\ref{thm:domin}.

\subsection{Pants decompositions}\label{subsec:constr}

Our first ingredient is the following.

\begin{lemma}\label{lem:findloxo}
For any $[\rho]\in\Repnfd$ with $\lambda_{\rho}\not\equiv 0$, there is a pants decomposition of~$\Sigma_g$ such that $\rho$ maps any cuff to a hyperbolic element.
If $\rho$ is~non\-abelian, then we may assume that the restriction of $\rho$ to the fundamental group of any pair of pants of the decomposition is nonabelian.
\end{lemma}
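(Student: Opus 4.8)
\textbf{Proof proposal for Lemma~\ref{lem:findloxo}.}

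The plan is to produce the pants decomposition in two stages: first arrange that all cuffs go to hyperbolic elements, then upgrade to nonabelian restrictions on each pair of pants. For the first stage, I would start from \emph{any} pants decomposition of $\Sigma_g$ given by curves $\gamma_1,\dots,\gamma_{3g-3}$ and try to modify it. If some $\rho(\gamma_i)$ is not hyperbolic — i.e. parabolic, elliptic, or $\pm\mathrm{Id}$ — I would replace $\gamma_i$ by a nearby simple closed curve in the same ``block'' of the decomposition on which $\rho$ is hyperbolic. The key point making this possible is that $\lambda_\rho\not\equiv 0$: since $\rho$ is non-Fuchsian but $\lambda_\rho$ is not identically zero, $\rho(\Gamma_g)$ is neither unipotent nor bounded (as recalled just before Proposition~\ref{prop:signs}), so some element is hyperbolic, and one can argue — using that the curves realizing a given free homotopy class fill up the surface, or more directly by an explicit local surgery at the curve $\gamma_i$ — that one may choose a simple closed curve transverse-to-the-old-one replacement whose $\rho$-image is hyperbolic. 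Here one should be slightly careful about whether $\rho$ is reducible (has a common fixed point on $\partial\HH^2$ or in $\HH^2$); but if $\rho$ were reducible then $\lambda_\rho$ would vanish on all of $\Gamma_g$ (an elementary group of parabolics/elliptics) unless it stabilizes a geodesic, in which case $\rho$ factors through $\Z\rtimes\Z/2$ — so the reducible case reduces to a direct check, and generically $\rho$ is irreducible, where finding hyperbolic elements in abundance is easy.

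For the second stage, suppose we already have a pants decomposition $\mathcal{P}$ with all cuffs hyperbolic, and that $\rho$ is nonabelian. For a pair of pants $P$ in $\mathcal{P}$ with fundamental group $\langle a,b\rangle$ (two boundary curves generating, the third being $(ab)^{-1}$), the restriction $\rho|_{\pi_1(P)}$ is abelian precisely when $\rho(a)$ and $\rho(b)$ commute, which for hyperbolic elements means they share the same axis. If this happens, I would perform an elementary move on $\mathcal{P}$: since $\rho(\Gamma_g)$ is nonabelian, there must be \emph{some} element $\delta\in\Gamma_g$ not commuting with $\rho(a)$, and by an innermost-disc / bigon-removal argument one can find such a $\delta$ represented by a simple closed curve meeting $P$ and disjoint from (or modifiable to be disjoint from) the rest of $\mathcal{P}$; replacing one curve of $\mathcal{P}$ adjacent to $P$ by a curve obtained from a Dehn twist or a band sum with $\delta$ changes $\pi_1(P)$ into a subgroup on which $\rho$ is nonabelian, while — by the first stage's surgery trick, reapplied — one can keep all cuffs hyperbolic. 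The bookkeeping subtlety is that fixing $P$ must not break a neighboring pair of pants, so I would order the pants and do the corrections by a finite induction, noting that each move only affects boundedly many pants and that ``nonabelian restriction to $P$'' is an open and ``move-stable'' condition in the appropriate sense.

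The main obstacle I anticipate is the surgery step: making precise the claim that whenever $\rho(\gamma_i)$ fails to be hyperbolic (or $\rho|_{\pi_1(P)}$ fails to be nonabelian) one can find a \emph{simple} closed curve — compatible with keeping the rest of the decomposition intact — whose $\rho$-image is hyperbolic (resp.\ which makes $\pi_1(P)$ nonabelian under $\rho$). This is where one genuinely uses the topology of curves on surfaces, and it is tempting to instead invoke the Gallo--Kapovich--Marden theorem \cite{gkm00}, which is cited in the introduction exactly for producing pants decompositions with good $\rho$-behaviour; indeed the cleanest route may be to deduce Lemma~\ref{lem:findloxo} directly from \cite{gkm00} (which gives a decomposition adapted to a projective structure / non-elementary representation) plus a separate treatment of the reducible-but-$\lambda_\rho\not\equiv0$ case, which is small and explicit. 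I would present the proof along those lines: cite \cite{gkm00} for the nonabelian/irreducible generic case, and handle the finitely many degenerate configurations (reducible $\rho$ fixing a geodesic, $\rho$ with image in $\Z\times\Z/2$, etc.) by hand.
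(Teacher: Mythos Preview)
Your instinct to invoke Gallo--Kapovich--Marden for the nonelementary case is exactly what the paper does (Lemma~\ref{lem:gkm}), and the idea of fixing abelian pairs of pants by Dehn twists in an adjacent $4$-holed sphere is also on the right track. The genuine gap is in your treatment of the elementary case, which you dismiss as ``small and explicit'' but which is in fact where almost all the work lies.

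Your classification of reducible $\rho$ with $\lambda_\rho\not\equiv 0$ is incorrect: such a $\rho$ need not stabilize a geodesic nor factor through anything like $\Z\rtimes\Z/2$. The relevant case is that $\rho(\Gamma_g)$ fixes a \emph{single} point $\xi\in\partial_\infty\HH^2$, i.e.\ lands in the (nonabelian, $2$-dimensional) upper-triangular subgroup. This is not a degenerate corner case to be checked by hand --- it is an infinite-dimensional family of representations, and your proposed surgery (``replace $\gamma_i$ by a nearby simple closed curve on which $\rho$ is hyperbolic'') has no obvious mechanism here: hyperbolicity of $\rho(\gamma)$ depends only on the abelianization, and there is no local reason why a replacement curve should have nonzero abelianized image. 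The paper's argument is global: the abelianization of $\rho$ defines a nonzero class $\omega\in H^1(\Sigma_g,\R)$, and one must produce a pants decomposition all of whose cuffs pair nontrivially with $\omega$. This is done by first exhibiting (via an explicit doubling-and-covering construction) a pants decomposition together with a single nonseparating curve $c$ meeting every cuff transversally, then applying enough Dehn twists along $c$ (chosen so that $\omega(c)\neq 0$) to push $\omega$ off zero on every cuff simultaneously. The second statement in the elementary case is then handled by a concrete fixed-point computation in a $4$-holed sphere, tracking the second fixed point $\xi_{\rho(\gamma)}$ of each hyperbolic $\rho(\gamma)$ under iterated Dehn twists --- not by a soft ``find some $\delta$ and do a band sum'' argument, which in the elementary setting gives you no control over whether the new pair of pants is diagonalizable.
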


In the case that $\rho$ is nonelementary, Lemma~\ref{lem:findloxo} is contained in the following result of Gallo--Kapovich--Marden \cite[part~A]{gkm00}.

\begin{lemma} \label{lem:gkm}
For any nonelementary $[\rho]\in\Repnfd$, there is a pants decomposition of~$\Sigma_g$ such that the fundamental group of any pair of pants maps injectively to a $2$-generator Schottky group under~$\rho$.
\end{lemma}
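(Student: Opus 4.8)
The plan is to deduce Lemma~\ref{lem:gkm} from the following statement, which I would prove by induction on $-\chi(\Sigma)\geq 1$:
\emph{$(\star)$ for every compact connected orientable surface $\Sigma$ with $\chi(\Sigma)<0$, and every nonelementary representation $\rho\colon\pi_1(\Sigma)\to\mathrm{PSL}(2,\C)$ sending each boundary component of $\Sigma$ to a loxodromic element (a vacuous condition when $\partial\Sigma=\varnothing$), the surface $\Sigma$ admits a pants decomposition such that each pants subgroup maps injectively, under $\rho$, onto a $2$-generator Schottky subgroup of $\mathrm{PSL}(2,\C)$.}
Lemma~\ref{lem:gkm} is the case $\Sigma=\Sigma_g$ of $(\star)$.

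For the base case $-\chi(\Sigma)=1$, the surface $\Sigma$ is either a one-holed torus or a pair of pants. In the pair-of-pants case one has $\rho(\pi_1(\Sigma))=\langle A,B\rangle$ with $A$, $B$ and $AB$ loxodromic and the group nonelementary, and one must show that this can be realized as a Schottky group \emph{for an appropriate choice of the three boundary curves}. Although $\langle A,B\rangle$ need be neither free nor discrete for the original generators, there is enough freedom in the choice of boundary curves that a Klein--Maskit ping-pong argument produces three loxodromic boundary holonomies bounding pairwise disjoint round disks on $\partial\HH^3$, hence a Schottky group. In the one-holed-torus case one performs a single non-separating cut along a curve $\delta$ with $\rho(\delta)$ loxodromic, chosen so that the resulting pair of pants falls into the previous case.

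For the inductive step $-\chi(\Sigma)\geq 2$, since $\Sigma$ is not a pair of pants it carries an essential non-peripheral simple closed curve, and the crux is to choose one, say $\gamma$, such that $\rho(\gamma)$ is loxodromic \emph{and} the restriction of $\rho$ to the fundamental group of each component of the surface obtained by cutting $\Sigma$ along $\gamma$ is again nonelementary --- or is a pair of pants with Schottky image, which is then treated by the base case. Granting such a $\gamma$, the inductive hypothesis applies to each piece (all of its boundary curves, namely $\gamma$ together with the boundary of $\Sigma$, map to loxodromic elements), and reassembling the pants decompositions of the pieces along $\gamma$ gives the desired decomposition of $\Sigma$. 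The existence of a good $\gamma$ is where nonelementariness is used: within a fixed handle or planar sub-configuration one has an infinite family of candidate simple closed curves, whereas the ``bad'' ones --- those with non-loxodromic image, or whose complementary pieces carry an elementary restriction, or which produce a non-Schottky pair of pants --- satisfy finitely many nontrivial algebraic constraints, so a generic member of the family works. (On a closed surface of genus $g\geq 2$, for instance, one takes $\gamma$ non-separating, so that cutting produces a connected surface of genus $g-1$ with two boundary curves, and one argues genericity inside the mapping class group orbit of a non-separating curve.)

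The hard part is exactly this last point, namely controlling degenerations: ruling out that every available candidate either destroys the loxodromicity of a cuff or collapses the restricted representation onto an elementary subgroup, and ensuring that the pairs of pants one eventually reaches carry genuinely Schottky --- discrete, free, purely loxodromic --- holonomy and not merely nonelementary holonomy. This is handled by a case analysis resting on the classification of elementary subgroups and of two-generator subgroups of $\mathrm{PSL}(2,\C)$: whenever a candidate curve fails, the failure forces $\rho$, in restriction to the relevant subsurface, into a rigid configuration (reducible, dihedral, or otherwise elementary), and one then produces another candidate --- or invokes an elementary move relating two pants decompositions, as in the Hatcher--Thurston pants complex --- that escapes that configuration. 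Organizing these cases uniformly is the substance of Part~A of \cite{gkm00}.
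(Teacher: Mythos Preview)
The paper does not prove Lemma~\ref{lem:gkm}; it is quoted without proof as a result of Gallo--Kapovich--Marden \cite[part~A]{gkm00}. Your proposal is therefore not to be compared with a proof in the paper, but with the original argument of \cite{gkm00}.

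Your overall inductive strategy---cut along a well-chosen simple closed curve and recurse---is indeed the shape of the argument in \cite{gkm00}. However, your inductive statement $(\star)$ is false as written, and the error surfaces in the base case. A pair of pants contains no essential simple closed curve other than its three boundary components; hence there is no ``freedom in the choice of the three boundary curves'', and whether $\rho(\pi_1(\Sigma))$ is a Schottky group is determined entirely by the hypotheses you have imposed. But those hypotheses (nonelementary image, loxodromic boundary holonomy) do not force the image to be discrete or free: one can easily have $\langle A,B\rangle\subset\mathrm{PSL}(2,\C)$ dense with $A$, $B$, $AB$ all loxodromic. So $(\star)$ fails for pairs of pants, and your base-case argument is empty. (Note also that being Schottky is a property of the group, not of a generating set, so even if there were other simple closed curves, ``changing the boundary curves'' could not help.)

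The repair is the one you half-acknowledge in the inductive step: one must arrange, \emph{at the moment of cutting}, that any pair-of-pants component produced already has Schottky image. This means the entire content of the lemma lies in the choice of cutting curves, and the induction should start at the one-holed torus only (or, as \cite{gkm00} actually do, one systematically splits off handles with Schottky holonomy rather than arbitrary curves). The genericity heuristic you sketch for the inductive step is in the right spirit, but the assertion that the bad curves satisfy ``finitely many nontrivial algebraic constraints'' is precisely what requires the lengthy case analysis of Part~A of \cite{gkm00}; without that analysis there is no argument ruling out that every candidate curve fails simultaneously.
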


To treat the case that $\rho$ is elementary, we use the following terminology.

\begin{definition}\label{def:abelianization}
Let $\rho\in\Hom(\Gamma_g,\PSL)$ be an elementary representation with a fixed point $\xi$ in the boundary at infinity $\partial_{\infty}\HH^2$ of~$\HH^2$.
Choose a geodesic line $\ell$ of~$\HH^2$ with endpoint~$\xi$.
For any $\gamma\in\Gamma$ we may write in a unique way $\rho(\gamma)=au$ where $a\in\PSL$ fixes~$\xi$ and preserves~$\ell$ and $u\in\PSL$ is unipotent or trivial; setting $\rho^{\mathrm{ab}}(\gamma):=a$ defines an abelian representation $\rho^{\mathrm{ab}}\in\Hom(\Gamma_g,\PSL)$.
We call it the \emph{abelianization} of~$\rho$.
Its conjugacy class under $\PSL$ does not depend on the choice of~$\ell$.
\end{definition}

\begin{proof}[Proof of Lemma~\ref{lem:findloxo} when $\rho$ is elementary]
Since $\lambda_{\rho}\not\equiv 0$, the group $\rho(\Gamma_g)$~has a fixed point $\xi\in\partial_{\infty}\HH^2$, and the abelianization of~$\rho$ can be seen as a nonzero element $\omega$ of $H^1(\Sigma_g,\R)$ after identifying the stabilizer of $\xi$ and~$\ell$ with~$(\R,+)$.

To prove the first statement of Lemma~\ref{lem:findloxo}, we must show that there exists a pants decomposition $\Pi$ of $\Sigma_g$ such that $\omega$ is nonzero on every cuff.
First, $\omega$ is nonzero on the homology class of some oriented, \emph{simple} closed curve~$c$: indeed, the generators of the standard presentation
$$\Gamma_g = \Big\langle \alpha_1, \dots, \alpha_g, \beta_1, \dots, \beta_g~\Big|~ \prod_{i=1}^g \, [\alpha_i, \beta_i] = 1\Big\rangle$$
all define simple closed curves and $\omega$ cannot vanish on all the corresponding homology classes.
Note that $c$ is necessarily nonseparating since separating curves are homologically trivial.
It is sufficient to find a pants decomposition $\Pi'$ of~$\Sigma_g$ and a nonseparating simple curve~$c'$ that intersects transversally all the cuffs of~$\Pi'$: then, applying a homeomorphism of~$\Sigma_g$ will enable us to assume that $c'=c$ (see \eg \cite[\S\,1.3.1]{fm12}), and applying enough Dehn twists along $c$ to~$\Pi'$ will yield a pants decomposition $\Pi$ none of whose cuffs annihilate~$\omega$.
Here is how to produce $\Pi'$ and~$c'$.
In genus $g=2$, consider an abstract pair of pants $P$ with boundary components $X, Y, Z$, and disjoint embedded arcs $u,v$ connecting $Y$ to~$X$ and to~$Z$ respectively.
Double $P$ across $X\cup Y\cup Z$ to obtain a copy of~$\Sigma_2$, then perform a \emph{half} Dehn twist around~$Y$: the arcs $u,v$ and their doubles arrange together to produce a single, nonseparating, simple closed curve~$c'$ that intersects transversally each of $X,Y,Z$.
We may take $\Pi'$ to consist of $P$ and its complement.
In arbitrary genus $g\geq 2$, we take a $(g-1)$-fold cyclic cover of this construction with respect to $Z$ (or~$X$). 

We now suppose that $\rho$ is nonabelian and establish the second statement of Lemma~\ref{lem:findloxo} by modifying the pants decomposition $\Pi$ we have just constructed.
Note that no pair of pants of~$\Pi$ is connected to itself.
Therefore, it is enough to prove that if a nondiagonalizable pair of pants $P$ is adjacent to a diagonalizable one $P'$ along a cuff~$U$, then we may modify $P$ and~$P'$ into two nondiagonalizable pairs of pants by applying enough Dehn twists along some simple closed curve of the $4$-punctured sphere $\Sigma:=P\cup U\cup P'$.
(We say that a pair of pants is \emph{diagonalizable} if $\rho$ takes its fundamental group into a conjugate of the group of diagonal matrices of $\PSL$.)
Note that $U$ and its pushforward by the Dehn twists will have the same, \emph{nonzero} image under~$\omega$ (equal to the sum of the images of the two boundary components of $P$ other than~$U$).
The fundamental group of the $4$-punctured sphere $\Sigma$ may be written
$$\pi_1(\Sigma) = \langle \alpha, \beta, \gamma, \delta ~|~ \alpha \beta \gamma \delta = 1 \rangle ,$$
where $\alpha, \beta, \gamma, \delta$ represent the boundary loops of~$\Sigma$, where $w:=\alpha \beta=( \gamma \delta)^{-1}$ represents~$U$, where $\rho(\langle \alpha, \beta \rangle)$ is nondiagonalizable, and where $\rho(\langle\gamma,\delta\rangle)$ is diagonalizable (see Figure~\ref{fig:4holedsph}).
For any $\gamma'\in\Gamma_g$ with $\rho(\gamma')$ hyperbolic, we denote by $\xi_{\rho(\gamma')}\in\partial_{\infty}\HH^2$ the fixed point of~$\rho(\gamma')$ other than the common fixed point~$\xi$.
Then $\xi_{\rho(\alpha)},\xi_{\rho(\beta)},\xi_{\rho(w)}$ are all distinct, while $\xi_{\rho(\gamma)}=\xi_{\rho(\delta)}=\xi_{\rho(w)}$.
Performing $n$ Dehn twists along the curve $V$ represented by $\beta\gamma=(\delta\alpha)^{-1}$ amounts, under suitable basepoint choice, to replacing $\beta$ with $\beta_n:=(\beta\gamma)^n \beta (\beta\gamma)^{-n}$, and $\gamma$ with $\gamma_n:=(\beta\gamma)^n \gamma (\beta\gamma)^{-n}$, while leaving $\alpha$ and $\delta$ unchanged.
The elements $\rho(\beta_n),\rho(\gamma_n)\in\PSL$ are still hyperbolic, and
$$\xi_{\rho(\beta_n)} = \rho(\beta\gamma)^n \cdot \xi_{\rho(\beta)} \quad\quad\mathrm{and}\quad\quad \xi_{\rho(\gamma_n)} = \rho(\beta\gamma)^n \cdot \xi_{\rho(\gamma )}.$$
Note that $\rho(\beta\gamma)\neq 1$ since $\xi_{\rho(\gamma )}=\xi_{\rho(w)}\neq \xi_{\rho(\beta)}$.
Therefore $\rho(\beta\gamma)$ is either a parabolic element, or a hyperbolic element with $\xi_{\rho(\beta\gamma)}\neq \xi_{\rho(\gamma )}$.
It follows that $\xi_{\rho(\gamma_n)}\neq\xi_{\rho(\gamma )}=\xi_{\rho(\delta)}$ for all $n\neq 0$, and that $\xi_{\rho(\beta_n)}\neq \xi_{\rho(\alpha)}$ for all $n\neq 0$ with at most one exception.
Thus, for all $n\neq 0$ with at most one exception, the pairs of pants whose cuffs are represented by $(\alpha,\delta,\beta\gamma)$ and by $(\beta_n,\gamma_n,\beta\gamma)$ are both nondiagonalizable.
This completes the proof of Lemma~\ref{lem:findloxo}.
\end{proof}

\begin{figure}[ht!]
\centering
\labellist
\small\hair 2pt
\pinlabel $\alpha$ [b] at 230 200
\pinlabel $\beta$ [t] at 53 230
\pinlabel $\gamma$ [u] at 55 40
\pinlabel $\delta$ [b] at 243 35
\pinlabel $U$ [t] at 195 138
\pinlabel $V$ [b] at 140 180
\pinlabel $P$ [b] at 0 160
\pinlabel $P'$ [b] at 0 70
\endlabellist
\includegraphics[scale=0.45]{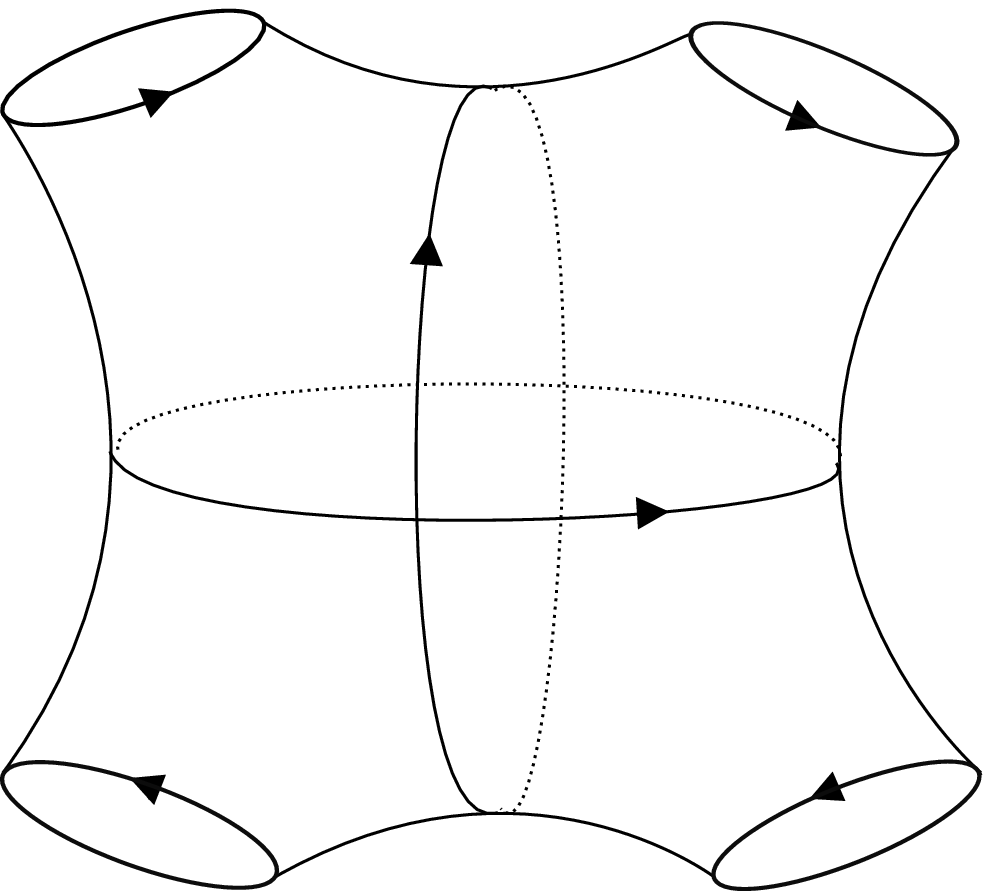}
\caption{The four-holed sphere~$\Sigma$ in the proof of Lemma~\ref{lem:findloxo}}
\label{fig:4holedsph}
\end{figure}

\subsection{Representations of the fundamental group of a pair of pants}\label{subsec:pants}

The following lemma gives a dictionary between the geometric and nongeometric representations (Definition~\ref{def:geomrep}) of the fundamental group of a pair of pants.

\begin{lemma}\label{lem:pantsfold}
Let $\Gamma=\langle \alpha, \beta, \gamma \,|\, \alpha\beta\gamma=1 \rangle$ be the fundamental group of a pair of pants $\Sigma$, with $\alpha, \beta, \gamma$ corresponding to the boundary loops.
\begin{itemize}
  \item For any $a,b,c>0$ such that none is the sum of the other two, there are exactly two representations $\tau\in\Hom(\Gamma,\PSL)$ satisfying
  \begin{equation}\label{eqn:lambda-tau}
  (\lambda_{\tau}(\alpha),\lambda_{\tau}(\beta),\lambda_{\tau}(\gamma)) = (a,b,c) ,
  \end{equation}
  up to conjugation under $\PGL$.
  One of them is geometric (with $|\mathrm{eu}(\tau)|=1$); the other is nongeometric (with $\mathrm{eu}(\tau)=0$), obtained from the geometric one by folding along any of the eight triskelion laminations of~$\Sigma$.
  \item For any $a,b,c>0$ such that one is the sum of the other two, there are exactly four representations $\tau\in\Hom(\Gamma,\PSL)$ satisfying \eqref{eqn:lambda-tau}, up to conjugation under $\PGL$.
  One of them is geometric (with $|\mathrm{eu}(\tau)|=1$).
  The other three are elementary (with $\mathrm{eu}(\tau)=0$): two are nonabelian, the third one is their abelianization.
  Each of the two nonabelian elementary representations is obtained from the geometric one by folding along any of four different triskelion laminations of~$\Sigma$.
\end{itemize}
\end{lemma}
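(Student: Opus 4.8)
The statement is about counting representations $\tau$ of the pair of pants group with prescribed boundary translation lengths $(a,b,c)$, and relating them by folding. I would organize the proof around the classical trace parametrization of $\mathrm{PSL}(2,\mathbb{R})$-representations of the free group $\langle\alpha,\beta\rangle$, together with the sign invariant coming from lifts to $\mathrm{SL}(2,\mathbb{R})$, as recalled in Section~\ref{subsec:Euler}.

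First I would set up coordinates. A representation $\tau$ with $\tau(\alpha),\tau(\beta),\tau(\gamma)$ hyperbolic and $\tau(\alpha\beta\gamma)=1$ is, up to conjugation by $\mathrm{PGL}$, determined by the three traces of lifts to $\mathrm{SL}(2,\mathbb{R})$. Fixing $\lambda_\tau(\alpha)=a$ forces $|\mathrm{tr}\,\widetilde{\tau}(\alpha)| = 2\cosh(a/2)$, so up to sign the three traces $(x,y,z)$ are pinned down by $(a,b,c)$; their signs are constrained by \eqref{eqn:eulermod2}, which says $s'(\tau(\alpha))s'(\tau(\beta))s'(\tau(\gamma))=(-\mathrm{Id})^{\mathrm{eu}(\tau)}$. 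I would then invoke the standard fact (Fricke, or e.g. Goldman's work on the character variety of the one-holed torus / three-holed sphere) that given $(x,y,z)$ with $x=\mathrm{tr}\,\widetilde\tau(\alpha)$, $y=\mathrm{tr}\,\widetilde\tau(\beta)$, $z=\mathrm{tr}\,\widetilde\tau(\gamma)=\mathrm{tr}\,\widetilde\tau(\alpha\beta)^{-1}$, there is exactly one $\mathrm{SL}(2,\mathbb{R})$-character with those three traces once $\kappa:=x^2+y^2+z^2-xyz-2$ is fixed — and here the product relation $\tau(\alpha\beta\gamma)=1$ means $z$ is $\pm\mathrm{tr}\,\widetilde\tau(\alpha\beta)$, which is itself determined by $x,y$ and the fourth trace $\mathrm{tr}\,\widetilde\tau(\alpha\beta^{-1})$; so the free parameter is really the "commutator-type" coordinate, and the representation is rigid once the three boundary traces (with signs) are fixed. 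Concretely: the set of conjugacy classes with $(\lambda_\tau(\alpha),\lambda_\tau(\beta),\lambda_\tau(\gamma))=(a,b,c)$ is in bijection with the admissible sign vectors $(\varepsilon_1,\varepsilon_2,\varepsilon_3)\in\{\pm1\}^3$ subject to $\varepsilon_1\varepsilon_2\varepsilon_3=(-1)^{\mathrm{eu}(\tau)}$, modulo the $\mathrm{PGL}$-action (which can flip an even number of... actually, conjugation by $\mathrm{PGL}\smallsetminus\mathrm{PSL}$ reverses orientation hence flips $\mathrm{eu}$, but does not change traces — so $\mathrm{eu}=1$ and $\mathrm{eu}=-1$ get identified). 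After this identification the count becomes combinatorial.

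Now the dichotomy. For the geometric representation ($|\mathrm{eu}|=1$), I would use the classical fact that a hyperbolic pair of pants with boundary lengths $(a,b,c)$ exists and is unique for \emph{every} triple $(a,b,c)\in(\mathbb{R}_{>0})^3$ — the "no triangle inequality" phenomenon for pants — giving exactly one geometric class regardless of whether one of $a,b,c$ is the sum of the other two. For $\mathrm{eu}=0$: by \eqref{eqn:Euler-nb-pants}/\eqref{eqn:eulermod2} the relevant sign vectors are the even ones. When none of $a,b,c$ is the sum of the other two, I would show $\tau$ must be nonelementary (if it shared a fixed point, one boundary length would be the sum of the other two — this is the key inequality to verify, e.g. via the computation $\lambda(u v) = \lambda(u)+\lambda(v)$ for hyperbolics $u,v$ with a common fixed point and matching translation directions, and $|\lambda(u)-\lambda(v)|$ otherwise), and a nonelementary $\tau$ with $\mathrm{eu}=0$ and fixed boundary traces is unique up to $\mathrm{PGL}$ by the trace-rigidity above; that gives the single nongeometric class, and I would exhibit the folding of the geometric pants along a triskelion lamination as realizing it (checking the Euler class drops from $\pm1$ to $0$ via the area formula $\mathrm{eu} = \frac{1}{2\pi}\sum_P c(P)\mathcal{A}(P)$, since the two triangles of a triskelion get opposite colors — here I would lean on Lemma~\ref{lem:pantsfold}'s own statement being the precise form of Theorem~\ref{thm:nfdfolded} restricted to pants). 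When one of $a,b,c$ \emph{is} the sum of the other two, the nonelementary $\mathrm{eu}=0$ representation degenerates and the elementary locus opens up: the $\mathrm{eu}=0$ classes are now the elementary ones with a common fixed point $\xi$, parametrized (via Definition~\ref{def:abelianization}) by the unipotent parts; I would count them as: one abelianization $\rho^{\mathrm{ab}}$ (all unipotent parts trivial), and two genuinely nonabelian ones distinguished by which boundary element carries the "long" side — i.e. which of the three satisfies $\lambda = $ sum of the other two as an honest equality of translations on the horocycle foliation, with the two choices corresponding to the two ways of distributing a nonzero unipotent part compatibly with the length constraint and the sign/Euler constraint. Then each nonabelian elementary one is a folding of the geometric pants along four of the eight triskelions (the other four spiral "the wrong way" and would produce the other elementary representation or fail to close up) — this I would verify by direct inspection of the eight spiraling patterns, matching the coloring to the degeneration.

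\textbf{Main obstacle.} The delicate point is the elementary case: proving there are \emph{exactly} three $\mathrm{eu}=0$ representations (two nonabelian plus one abelian) and no more, and pinning down precisely which four of the eight triskelion foldings yield each nonabelian one. The trace-rigidity argument that works cleanly in the nonelementary case degenerates exactly on the locus "one length is the sum of the other two," so I cannot just read off the count from the character variety; instead I would argue directly with horocyclic coordinates on the stabilizer of $\xi$, writing $\tau(\alpha),\tau(\beta)$ in the form $\left(\begin{smallmatrix} e^{t/2} & * \\ 0 & e^{-t/2}\end{smallmatrix}\right)$ and solving the constraints $\tau(\alpha\beta\gamma)=1$ together with the three length conditions, carefully tracking when two solutions are $\mathrm{PGL}$-conjugate. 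The folding correspondence in this degenerate case is where I expect to spend the most care, since the lamination's leaves spiral onto boundary geodesics and Remark~\ref{rem:spiral-boundary} must be used to see the boundary lengths are preserved under folding — matching that bookkeeping against the "which four triskelions" claim is the crux.
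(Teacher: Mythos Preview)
Your overall architecture is right --- split by the sign $\varepsilon=(-1)^{\mathrm{eu}}$ of the product of traces, classify for each sign, then realize the nongeometric classes as foldings --- and this is exactly what the paper does. But your account of the trace classification is muddled in a way that would not survive writing out: the quantity $\kappa=x^2+y^2+z^2-xyz-2$ you invoke is already determined by $(x,y,z)$, there is no ``fourth trace'' $\mathrm{tr}\,\widetilde\tau(\alpha\beta^{-1})$ in play (since $\gamma=(\alpha\beta)^{-1}$ gives $\mathrm{tr}\,\widetilde\tau(\gamma)=\mathrm{tr}\,\widetilde\tau(\alpha\beta)$ directly), and the sign-vector discussion collapses to the single bit~$\varepsilon$. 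The honest statement you need is Fricke--Vogt: an \emph{irreducible} $\mathrm{SL}(2,\R)$ representation of $F_2$ is determined up to $\GL$-conjugacy by $(\mathrm{tr}\,\alpha,\mathrm{tr}\,\beta,\mathrm{tr}\,\alpha\beta)$, and irreducibility is precisely what fails on the degenerate locus.

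The paper sidesteps the abstract trace-rigidity invocation entirely by a direct normalization that handles both cases at once: conjugate so that $\overline\tau(\alpha)=\mathrm{diag}(A,A^{-1})$ and $\overline\tau(\beta)=\left(\begin{smallmatrix}B+x & y\\ z & B^{-1}-x\end{smallmatrix}\right)$, solve for $x$ from the trace of $\gamma$ (this already uses only $A,B,C,\varepsilon$), and then observe that the remaining freedom $(y,z)$ with $yz=\nu-1$ is transitive under diagonal $\PGL$-conjugation \emph{unless} $\nu=1$, in which case exactly three classes appear (upper triangular, lower triangular, diagonal). The factorization of $\nu=1$ then shows this happens iff $\varepsilon=+1$ and one length is the sum of the other two. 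This is cleaner than your plan of first proving ``elementary $\Rightarrow$ triangle equality'' and then switching to horocyclic coordinates, though your route would also work.

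For the folding correspondence, your plan to inspect all eight spiraling patterns directly is unnecessary. The paper argues more softly: folding $j$ along any triskelion preserves boundary lengths (Remark~\ref{rem:spiral-boundary}) and produces something not $\PGL$-conjugate to~$j$ (the folding map is not an isometry), hence lands in one of the $\mathrm{eu}=0$ classes just enumerated. In the degenerate case, the two nonabelian elementary classes are swapped by the \emph{natural involution of the pair of pants}, which also permutes the eight triskelions in two orbits of four --- this immediately gives the ``four triskelions each'' claim without any bookkeeping. You should adopt this involution argument; it is the idea your proposal is missing.
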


\begin{proof}
Fix $a,b,c>0$.
We first determine the number of conjugacy classes of representations $\tau$ satisfying
\eqref{eqn:lambda-tau}. Set $(A,B,C) := (e^{a/2},e^{b/2},e^{c/2})$, and let $\tau\in\Hom(\Gamma,\PSL)$ satisfy \eqref{eqn:lambda-tau}.
Up to conjugating~$\tau$ by $\PGL$, we can find lifts $\overline{\tau}(\alpha)\in\SL$ of $\tau(\alpha)$ and $\overline{\tau}(\beta)\in\SL$ of~$\tau(\beta)$ of the form
$$\overline{\tau}(\alpha)=\begin{pmatrix} A & 0 \\ 0 & A^{-1} \end{pmatrix}~\text{ and }~\overline{\tau}(\beta)=\begin{pmatrix} B+x & y \\ z & B^{-1}-x \end{pmatrix}$$
with $x,y,z\in\R$.
Since $\alpha$ and~$\beta$ freely generate~$\Gamma$, this determines a lift $\overline{\tau}\in\Hom(\Gamma,\SL)$ of~$\tau$.
The sign $\varepsilon\in \{ \pm 1\}$ of $\mathrm{Tr}(\overline{\tau}(\alpha))\,\mathrm{Tr}(\overline{\tau}(\beta))\,\mathrm{Tr}(\overline{\tau}(\gamma))$ does not depend on the choice of $\overline{\tau}(\alpha),\overline{\tau}(\beta)$.
By \eqref{eqn:Euler-nb-pants}, we have $\mathrm{eu}(\tau)\in\{ -1,0,1\}$, with $|\mathrm{eu}(\tau)|=1$ if and only if $\tau$ is geometric, and by \eqref{eqn:eulermod2}
$$\varepsilon = (-1)^{\mathrm{eu}(\tau)} .$$
The trace of $\overline{\tau}(\gamma)=\overline{\tau}(\alpha\beta)^{-1}$ is
$$A (B + x) + A^{-1} (B^{-1} - x) = \varepsilon (C + C^{-1}) ,$$
hence
$$x = \frac{\varepsilon (C + C^{-1}) - A B - (A B)^{-1}}{A - A^{-1}}$$
is uniquely determined by $A,B,C$ and~$\varepsilon$.
Let $\nu:=(B+x)(B^{-1}-x)$.
Since $\overline{\tau}(\beta)\in\mathrm{SL}(2,\R)$, we have $yz=\nu-1$.
If $\nu\neq 1$, then all pairs $(y,z)$ of reals with product $\nu-1$ can be obtained by conjugating $\overline{\tau}(\alpha), \overline{\tau}(\beta)$ by a diagonal matrix in $\PGL$ (which does not change~$x$): thus $\tau$ is unique up to conjugation once we fix $\varepsilon\in \{-1,1\}$.
If $\nu=1$, then $\overline{\tau}(\beta)$ is either upper or lower triangular, or both, hence three conjugacy classes for~$\tau$, with $\tau(\Gamma)$ consisting respectively of upper triangular, lower triangular, and diagonal matrices.
The condition $\nu=1$ amounts to $(B^{-1}-B-x)x=0$, or equivalently to
$$\left( \frac{B C}{A} - \varepsilon \right) \left( \frac{A C}{B} - \varepsilon \right) \cdot \left( \frac{A B}{C} - \varepsilon \right) (A B C - \varepsilon) = 0 :$$
in other words, $\varepsilon=1$ and one of $a,b,c$ is the sum of the other two.

Let $j\in\Hom(\Gamma,\PSL)$ be geometric (Definition~\ref{def:geomrep}).
For any folding $\rho$ of~$j$ along a triskelion lamination $\Upsilon$ of~$\Sigma$, the functions $\lambda_j$ and $\lambda_{\rho}$ agree on $\{\alpha, \beta, \gamma\}$ (Remark~\ref{rem:spiral-boundary}), and $\rho$ is not conjugate to~$j$ under $\PGL$ because the folding map $f$ is not an isometry (see Section~\ref{subsec:rem-Lip}); therefore, $\mathrm{eu}(\rho)=0$ by the above discussion.
If none of $a,b,c$ is the sum of the other two, then $\rho$ belongs to the unique conjugacy class of representations $\tau$ satisfying \eqref{eqn:lambda-tau} and $\mathrm{eu}(\tau)=0$.
If one of $a,b,c$ is the sum of the other two, then $\rho$ belongs to one of the two conjugacy classes of nonabelian representations $\tau$ satisfying \eqref{eqn:lambda-tau} and $\varepsilon=1$ (Observation~\ref{obs:pleated-nonab}).
The representation $\rho'$ obtained from~$j$ by folding along the image of~$\Upsilon$ under the natural involution of the pair of pants belongs to the other conjugacy class of nonabelian representations $\tau$ satisfying \eqref{eqn:lambda-tau} and $\varepsilon=1$.
The abelianization of $\rho$ or~$\rho'$ is not conjugate to~$j$, hence satisfies \eqref{eqn:lambda-tau} and $\varepsilon=1$ as well.
\end{proof}

\begin{corollary}\label{cor:Lip-pants}
Let $\Gamma=\langle \alpha, \beta, \gamma \,|\, \alpha\beta\gamma=1 \rangle$ be the fundamental group of a pair of pants $\Sigma$, with $\alpha, \beta, \gamma$ corresponding to the boundary loops.
Consider two representations $j,\rho\in\Hom(\Gamma,\PSL)$ with $j$ geometric (Definition~\ref{def:geomrep}), $\rho$ nongeometric, and
$$(\lambda_j(\alpha),\lambda_j(\beta),\lambda_j(\gamma)) = (\lambda_{\rho}(\alpha),\lambda_{\rho}(\beta),\lambda_{\rho}(\gamma)) .$$
Then there exists a $1$-Lipschitz, $(j,\rho)$-equivariant map $f : \HH^2\rightarrow\HH^2$ such~that $\Lip_p(f)<1$ for any $p\in\HH^2$ projecting to a point of $j(\Gamma)\backslash\HH^2$ off the boundary of the convex core.
\end{corollary}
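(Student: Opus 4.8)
The plan is to reduce to the case where $\rho$ is nonabelian, treat that case with the stretch-locus machinery of Lemma~\ref{lem:stretchlocus}, and then recover the abelian case by a horocyclic projection. So suppose first that $\rho$ is nonabelian. By Lemma~\ref{lem:pantsfold}, since $j$ is geometric and $\lambda_j=\lambda_\rho$ on $\{\alpha,\beta,\gamma\}$, the representation~$\rho$ is obtained from~$j$ by folding along each of a family $\Upsilon_1,\dots,\Upsilon_m$ of triskelion laminations of~$\Sigma$ --- all eight when no value of~$\lambda_j$ is the sum of the other two, an explicit set of four otherwise. After composing the associated folding maps with isometries of~$\HH^2$, I may assume each is a $1$-Lipschitz, $(j,\rho)$-equivariant map $f_i:\HH^2\to\HH^2$ that is an isometry in restriction to each plate of~$\widetilde{\Upsilon_i}$ and, since the two plates carry opposite colors (so that $\mathrm{eu}(\rho)=0$), folds by angle~$\pi$ across every leaf of~$\widetilde{\Upsilon_i}$ lying over the interior of the convex core. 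Writing $\widetilde K\subset\HH^2$ for the preimage of the convex core~$K$ of $j(\Gamma)\backslash\HH^2$, the infimum of Lipschitz constants of $(j,\rho)$-equivariant maps equals~$1$: it is $\leq 1$ because $f_1$ exists, and $\geq 1$ by Remark~\ref{rem:Clambda-CLip} since $\lambda_\rho(\alpha)=\lambda_j(\alpha)>0$; moreover it is achieved. Hence Lemma~\ref{lem:stretchlocus} applies and produces a nonempty $j(\Gamma)$-invariant geodesic lamination~$\widetilde\Lambda$ and the associated stretch locus, with $\mathcal{F}$ the (nonempty) space of $1$-Lipschitz $(j,\rho)$-equivariant maps $\HH^2\to\HH^2$.

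The crux is to show that the stretch locus is contained in~$\partial\widetilde K$, the preimage of $\partial K=\alpha\cup\beta\cup\gamma$. First it lies in~$\widetilde K$: for any $f\in\mathcal{F}$, the map agreeing with~$f$ on~$\widetilde K$ and with $f\circ\pi$ on $\HH^2\smallsetminus\widetilde K$, where $\pi$ is the nearest-point projection onto~$\widetilde K$, is again in~$\mathcal{F}$ and is strictly contracting off~$\widetilde K$ (because $\pi$ is, by negative curvature), so no point off~$\widetilde K$ can be isometrically preserved by all of~$\mathcal{F}$. Next, no leaf of~$\widetilde\Lambda$, and no isometrically preserved complementary region of~$\widetilde\Lambda$, can meet the interior of~$K$: a leaf of~$\widetilde\Lambda$ through the interior of~$K$ would be a complete geodesic of the pair of pants whose two ends spiral onto cuffs, and choosing~$i$ so that the leaves of~$\Upsilon_i$ spiral the other way around one such cuff, that leaf crosses~$\widetilde{\Upsilon_i}$ transversally, so $f_i$ --- which folds by~$\pi$ there --- does not restrict to it as an isometric embedding, contradicting Lemma~\ref{lem:stretchlocus} since $f_i\in\mathcal{F}$; and an isometrically preserved region meeting the interior of~$K$ would have to avoid~$\widetilde{\Upsilon_i}$ for every~$i$, contradicting the fact that the~$\Upsilon_i$ together fill the pair of pants. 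These two combinatorial facts --- that the triskelion laminations along which $\rho$ folds~$j$ spiral in both directions around each cuff, and that they fill~$\Sigma$ --- are finite checks on the eight triskelion laminations (Figure~\ref{fig:laminations}). Hence the stretch locus lies in~$\partial\widetilde K$; in particular $\widetilde\Lambda$ is the preimage of a submulticurve of $\{\alpha,\beta,\gamma\}$ and so has finitely many complementary regions modulo~$j(\Gamma)$.

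To conclude, take any $p\in\HH^2$ projecting off~$\partial K$. Then $p\notin\widetilde\Lambda$, so $p$ lies in a complementary region~$C$ of~$\widetilde\Lambda$ that is not contained in the stretch locus, and Lemma~\ref{lem:stretchlocus} gives $g_C\in\mathcal{F}$ with $\Lip_q(g_C)<1$ for all $q\in C$ --- hence, by $(j,\rho)$-equivariance of~$g_C$, on every $j(\Gamma)$-translate of~$C$ as well. Picking representatives $C_1,\dots,C_N$ of the regions and the corresponding maps $g_1,\dots,g_N$, I set $f:=\frac1N\sum_{k=1}^N g_k$ (barycentric average) and apply Lemma~\ref{lem:partofunity} with the constant weights $\tfrac1N$: this gives $\Lip_p(f)\leq\frac1N\sum_k\Lip_p(g_k)$, which is $\leq 1$ everywhere and $<1$ at every~$p$ lying in some translate of a~$C_k$, i.e.\ at every~$p$ off~$\partial\widetilde K$. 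Then $f$ is $(j,\rho)$-equivariant, $1$-Lipschitz by Remark~\ref{rem:local-Lip}, and satisfies $\Lip_p(f)<1$ for all $p$ projecting off the boundary of the convex core.

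Finally, if $\rho$ is abelian then, by Lemma~\ref{lem:pantsfold}, one value of~$\lambda_j$ is the sum of the other two and $\rho$ is the abelianization (Definition~\ref{def:abelianization}) of some nonabelian elementary representation~$\rho_1$ with the same values on $\{\alpha,\beta,\gamma\}$. Applying the case already treated to $(j,\rho_1)$ yields a $1$-Lipschitz, $(j,\rho_1)$-equivariant map~$F$ with $\Lip_p(F)<1$ off the boundary of the convex core. Normalizing so that the common fixed point of $\rho_1(\Gamma)$ and $\rho(\Gamma)$ is $\infty\in\partial_\infty\HH^2$ and the axis defining the abelianization is the imaginary axis~$\ell$, the projection $p_\infty:z\mapsto i\,\mathrm{Im}(z)$ of~$\HH^2$ onto~$\ell$ along the horocycles centered at~$\infty$ is $1$-Lipschitz (being the composition of an isometry $\R\xrightarrow{\sim}\ell$ with a Busemann function at~$\infty$) and satisfies $p_\infty\circ\rho_1(\gamma)=\rho(\gamma)\circ p_\infty$, since $\rho_1(\gamma)=\rho(\gamma)u_\gamma$ with $u_\gamma$ unipotent fixing~$\infty$ and preserving every horocycle centered there; hence $f:=p_\infty\circ F$ is $(j,\rho)$-equivariant, $1$-Lipschitz, and $\Lip_p(f)\leq\Lip_p(F)<1$ off the boundary of the convex core. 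I expect the main obstacle to be the second paragraph: confining the stretch locus to~$\partial\widetilde K$ forces one to exploit the explicit combinatorics of the triskelion laminations (their bidirectional spiraling at the cuffs and their filling~$\Sigma$), together with the elementary but crucial observation that folding by~$\pi$ along a lamination crossed transversally by a leaf destroys isometry along that leaf --- and to be a little careful so that ``off the boundary of the convex core'' genuinely covers the funnels and not merely the interior of the compact pants.
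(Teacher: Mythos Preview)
Your proof is correct and takes essentially the same approach as the paper: use the folding maps along the triskelion laminations supplied by Lemma~\ref{lem:pantsfold} as witnesses in~$\mathcal F$ to confine the stretch locus of Lemma~\ref{lem:stretchlocus} to~$\partial\widetilde K$, and treat the abelian case by postcomposing with the horocyclic projection. Your presentation is slightly more explicit than the paper's --- you spell out the barycentric averaging that produces a single~$f$ from the per-region maps, and you separately exclude isometrically preserved complementary regions --- and your ``opposite spiraling'' criterion for transversality is equivalent to the paper's observation that any interior simple geodesic lies in at most two of the eight triskelions (hence is transverse to at least two of the four that realize~$\rho$).
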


Note that such a map $f$ is necessarily an isometry in restriction to the translation axes of $j(\alpha),j(\beta),j(\gamma)$ in~$\HH^2$.
The convex core of $j(\Gamma)\backslash\HH^2$ naturally identifies with~$\Sigma$.

\begin{proof}
We first assume that $\rho$ is nonabelian.
By Lemma~\ref{lem:pantsfold}, it is obtained from~$j$ by folding along any of at least four of the eight triskelion laminations of~$\Sigma$.
Let $\ell$ be an injectively immersed geodesic that spirals between two boundary components.
If the two boundary components are different, then $\ell$ is contained in only two triskelion laminations, and intersects the others transversely; if the two boundary components are the same, then $\ell$ intersects transversely all triskelion laminations of~$\Sigma$.
In both cases we see that a lift of $\ell$ to~$\HH^2$ cannot be isometrically preserved by all $1$-Lipschitz, $(j,\rho)$-equivariant maps $f : \HH^2\rightarrow\HH^2$ (such maps exist since $\rho$ is a folding of~$j$).
This holds for any~$\ell$, hence shows that the lamination $\widetilde{\Lambda}\subset\HH^2$ of Lemma~\ref{lem:stretchlocus} is contained in (in fact, equal to) the preimage of the boundary of the convex core of $j(\Gamma)\backslash\HH^2$, which identifies with the boundary of~$\Sigma$.
By Lemma~\ref{lem:stretchlocus}, this means that there exists a $1$-Lipschitz, $(j,\rho)$-equivariant map $f : \HH^2\rightarrow\HH^2$ such that $\Lip_p(f)<1$ for all $p\in\HH^2$ projecting to points of $j(\Gamma)\backslash\HH^2$ off the boundary of the convex core.

We now assume that $\rho$ is abelian.
By Lemma~\ref{lem:pantsfold}, it is the abelianization of some representation $\rho'$ that is a folding of~$j$.
The group $\rho'(\Gamma)$ fixes a point $\xi\in\partial_{\infty}\HH^2$, and $\rho(\Gamma)$ preserves a geodesic line $\ell$ of~$\HH^2$ with endpoint~$\xi$.
By postcomposing any $1$-Lipschitz, $(j,\rho')$-equivariant map with the projection onto~$\ell$ along the horospheres centered at~$\xi$, we obtain a $1$-Lipschitz, $(j,\rho)$-equivariant map; moreover, $1$ is the optimal Lipschitz constant by Remark~\ref{rem:Clambda-CLip}.
This shows that the stretch locus (Definition~\ref{def:stretch-locus}) of $(j,\rho)$ is contained in that of $(j,\rho')$, and we conclude as above.
\end{proof}

\begin{remark}
The nonabelian, nongeometric representations in Lemma~\ref{lem:pantsfold} can also be obtained by folding along a nonmaximal geodesic lamination consisting of a unique leaf spiraling from a boundary component to itself.
Folding along a maximal lamination which is not a triskelion gives a representation with values in $\PGL$ and not $\PSL$.
\end{remark}

\subsection{Proof of Proposition~\ref{prop:signs}}\label{subsec:provesigns}

By Lemma~\ref{lem:findloxo}, there is a pants decomposition $\Pi$ of~$\Sigma_g$ such that $\rho$ maps each cuff to a hyperbolic element, and such that if $\rho$ is nonabelian then its restriction to the fundamental group of each pair of pants is nonabelian.
Let $j\in\Hom(\Gamma_g,\PSL)$ be a Fuchsian representation such that $\lambda_j(\gamma)=\lambda_{\rho}(\gamma)$ for all $\gamma\in\Gamma_g$ corresponding to cuffs of pants of~$\Pi$. 
The twist parameters along the cuffs will be adjusted later, so for the moment we choose them arbitrarily.
Let $\mathcal{C}$ be the $j(\Gamma_g)$-invariant (disjoint) union of all geodesics of~$\HH^2$ projecting to the cuffs in $j(\Gamma_g)\backslash\HH^2\simeq\Sigma_g$.
For each pair of pants $P$ in~$\Pi$, choose a subgroup $\Gamma^P$ of~$\Gamma_g$ which is conjugate to $\pi_1(P)$: then $j|_{\Gamma^P}$ is the holonomy of a hyperbolic metric on $P$ with cuff lengths given by~$\lambda_{\rho}$.
Choose a lift $\widetilde{P}\subset\HH^2$ of the convex core of $j(\Gamma_P)\backslash\HH^2$: it is the closure of a connected component of $\HH^2\smallsetminus\mathcal{C}$.
If the restrictions of $j$ and $\rho$ to~$\Gamma^P$ are conjugate by some isometry $f^P$ of~$\HH^2$, then we give $P$ the label $1$ or $-1$, depending on whether $f^P$ preserves the orientation or not.
If the restrictions of $j$ and $\rho$ to~$\Gamma^P$ are not conjugate, then we give $P$ the label~$0$.
In this case,
\begin{itemize}
  \item by Corollary~\ref{cor:Lip-pants}, there is a $1$-Lipschitz, $(j|_{\Gamma^P},\rho|_{\Gamma^P})$-equivariant map $f^P : \widetilde{P}\rightarrow\HH^2$ with $\Lip_p(f^P)<1$ for all $p\notin\partial\widetilde{P}$;
  \item by Lemma~\ref{lem:pantsfold}, if $\rho$ is nonabelian then $\rho|_{\Gamma^P}$ is a folding of~$j|_{\Gamma^P}$ along some triskelion lamination of~$P$; we denote by $F^P : \widetilde{P}\rightarrow\HH^2$ the folding map.
\end{itemize}
Note that in restriction to any connected component of $\partial\widetilde{P}$ (a line), the maps $f^P$ and~$F^P$ are both isometries; they may disagree by a constant shift.

The collection of all maps~$f^P$, extended $(j,\rho)$-equivariantly, piece together to yield a map $f^{\ast} : \HH^2\smallsetminus\mathcal{C}\rightarrow\HH^2$.
The obstruction to extending $f^{\ast}$ by continuity on each geodesic $\ell\subset\mathcal{C}$ is that the maps on either side of~$\ell$ may disagree by a constant shift along~$\ell$ if $\ell$ separates two pairs of pants labeled $(1,-1)$, $(\pm 1,0)$, or $(0,0)$.
This discrepancy $\delta(\ell)\in\R$ is the same on the whole $j(\Gamma_g)$-orbit of~$\ell$.
To correct it, we postcompose $j$ with an earthquake supported on the cuff associated with~$\ell$, of length $-\delta(\ell)$.
We repeat for each $j(\Gamma_g)$-orbit in~$\mathcal{C}$, and eventually obtain a new Fuchsian representation~$j_0$.
By construction, there is a $1$-Lipschitz, $(j_0,\rho)$-equivariant map $f : \HH^2\rightarrow\HH^2$, obtained simply by gluing together isometric translates of the~$f_P$.
This extension $f$ satisfies Proposition~\ref{prop:signs}.(1).

If $\rho$ is nonabelian, then similarly the maps $f^P$ for $P$ labeled $\pm 1$ and $F^P$ for $P$ labeled~$0$ piece together to yield a map $F^{\ast} : \HH^2\smallsetminus\mathcal{C}\rightarrow\HH^2$.
As above, we can modify $j$ by earthquakes into a new Fuchsian representation~$j'_0$, and $F^{\ast}$ by piecewise isometries into a $(j'_0,\rho)$-equivariant, continuous map $F : \HH^2\rightarrow\HH^2$ which is a folding map.
This proves Proposition~\ref{prop:signs}.(2).

Proposition~\ref{prop:signs}.(3) is satisfied by construction.

\subsection{Uniform Lipschitz bounds}\label{subsec:uniform-Lip}

In order to prove the second statement of Theorem~\ref{thm:domin} in Section~\ref{subsec:hallali2}, we shall use the following result, which gives Lipschitz bounds analogous to Proposition~\ref{prop:signs}.(1) but uniform.

\begin{proposition}\label{prop:unif-prop-signs}
For any decomposition $\Pi$ of~$\Sigma_g$ into pairs of pants labeled $-1,0,1$ and any continuous family $(j_t)_{t\geq 0}\subset\Hom(\Gamma_g,\PSL)$ of Fuchsian representations, there exist a family $(\rho_t)_{t\geq 0}\subset\Hom(\Gamma_g,\PSL)$ of non-Fuchsian representations and, for any $t$ in a small interval $[0,t_0]$, a~$1$-Lip\-schitz, $(j_t,\rho_t)$-equivariant map $\varphi_t : \HH^2\rightarrow\HH^2$, with the following properties:
\begin{itemize}
  \item $\varphi_t$ is an orientation-preserving (\resp orientation-reversing) isometry in restriction to any connected subset of~$\HH^2$ projecting to a union of pants labeled $1$ (\resp $-1$) in $j_t(\Gamma_g)\backslash\HH^2\simeq\Sigma_g$;
  \item for any $\eta>0$ there exists $C<1$ such that $\Lip_p(\varphi_t)\leq C$ for all $t\in [0,t_0]$ and all $p\in\HH^2$ whose image in $j_t(\Gamma_g)\backslash\HH^2\simeq\Sigma_g$ lies inside a pair of pants $P$ labeled~$0$, at distance $\geq\eta$ from the boundary of~$P$.
\end{itemize}
\end{proposition}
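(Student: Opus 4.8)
The plan is to run the construction of Proposition~\ref{prop:signs}.(1) ``with parameters'', tracking everything uniformly in~$t$, and then to correct the failure of continuity of the glued map by a single averaging step (Lemma~\ref{lem:partofunity}) rather than by earthquakes. First I would fix, once and for all, for each pair of pants $P$ of~$\Pi$ a subgroup $\Gamma^P\subset\Gamma_g$ conjugate to $\pi_1(P)$, and I would choose $\rho_t$ by specifying its restriction to each $\Gamma^P$: for $P$ labeled $\pm1$, I take $\rho_t|_{\Gamma^P}$ to be the image of $j_t|_{\Gamma^P}$ under a fixed orientation-reversing (resp.\ orientation-preserving) isometry so that the two are $\PGL$-conjugate with the right sign; for $P$ labeled~$0$, I take $\rho_t|_{\Gamma^P}$ to be a (continuously chosen) folding of $j_t|_{\Gamma^P}$ along a fixed triskelion lamination, which by Lemma~\ref{lem:pantsfold} is the unique nongeometric representation with the same cuff lengths. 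Gluing the $\rho_t|_{\Gamma^P}$ across the cuffs of~$\Pi$ — using that cuff lengths agree and choosing the twist parameters continuously in~$t$ — yields $\rho_t\in\Hom(\Gamma_g,\PSL)$, non-Fuchsian since it has a nongeometric (hence Euler class $0$) pants piece whenever some label is~$0$, and otherwise because an orientation-reversing piece already forces it out of $\Repfd$. Continuity of $t\mapsto\rho_t$ is immediate from the explicit formulas.

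Next I would build the equivariant maps. On each lifted pants plate $\widetilde P\subset\HH^2$ for $j_t$, Corollary~\ref{cor:Lip-pants} (for $P$ labeled~$0$) or an isometry (for $P$ labeled $\pm1$) supplies a $1$-Lipschitz $(j_t|_{\Gamma^P},\rho_t|_{\Gamma^P})$-equivariant map $f_t^P:\widetilde P\to\HH^2$ with $\Lip_p(f_t^P)<1$ off $\partial\widetilde P$ in the labeled-$0$ case. These can be chosen to depend continuously on~$t$, and the strict-Lipschitz estimate off the boundary can be upgraded to a \emph{uniform} one: for each $\eta>0$ there is $C_\eta<1$ with $\Lip_p(f_t^P)\le C_\eta$ for all $t\in[0,t_0]$ and all $p$ whose projection to the pants lies at distance $\ge\eta$ from $\partial P$. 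I would extract this from the proof of Corollary~\ref{cor:Lip-pants} together with compactness of the relevant pants moduli as $t$ ranges over $[0,t_0]$; the function $(t,p)\mapsto\Lip_p(f_t^P)$ is upper semicontinuous, and on the compact set $\{t\in[0,t_0]\}\times\{p:\ \mathrm{dist}\ge\eta\}$ it is $<1$, so it has a maximum $C_\eta<1$. Extending $(j_t,\rho_t)$-equivariantly, the $f_t^P$ assemble to a map $\varphi_t^\ast$ defined off the union $\mathcal C_t$ of cuff-geodesics.

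The main obstacle — and the reason for the restriction to a small interval $[0,t_0]$ — is that $\varphi_t^\ast$ need not match up continuously across a cuff-geodesic $\ell\subset\mathcal C_t$: the two adjacent plate maps, both isometries on~$\ell$, may differ by a shift $\delta_t(\ell)$ along~$\ell$. In Proposition~\ref{prop:signs} this was killed by an earthquake on $j$, but here $j_t$ is given and may not be deformed. Instead I would fix the discrepancy by an averaging/interpolation in a collar neighborhood of each cuff: choose a $j_t(\Gamma_g)$-invariant collar $N_t(\ell)$ of each $\ell$, of fixed width $\ge 2\eta_0$ for some small $\eta_0>0$ and all $t\in[0,t_0]$ (possible for $t_0$ small since the $j_t$ stay in a compact set of Teichm\"uller space), and on $N_t(\ell)$ replace $\varphi_t^\ast$ by the Lemma~\ref{lem:partofunity} average of the two adjacent plate maps with a partition of unity $\{\psi_0,\psi_1\}$ that is a function of the signed distance to~$\ell$ only. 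The ``Leibniz rule'' of Lemma~\ref{lem:partofunity} gives $\Lip_p(\varphi_t)\le \Lip_p(\psi_i)\,R_t(p)+\sum\psi_i(p)\,\Lip_p(f_t^{P_i})$, where $R_t(p)=|\delta_t(\ell)|\cdot(\text{a bounded factor})\to 0$ as $t_0\to 0$ if we normalize the twist parameters so that $\delta_0(\ell)=0$ on the nose; since the $\psi_i$ have gradient bounded by $1/\eta_0$, shrinking $t_0$ makes the correction term as small as we like. This keeps $\varphi_t$ globally $1$-Lipschitz (indeed $<1$ wherever some $\Lip_p(f_t^{P_i})<1$, in particular everywhere on the collar once the correction is small, and on the $\pm1$-pieces outside the collars $\varphi_t$ is still the original isometry). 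Choosing $\eta_0<\eta$ ensures that any point of a pants $P$ labeled~$0$ at distance $\ge\eta$ from $\partial P$ lies outside all collars, where $\Lip_p(\varphi_t)=\Lip_p(f_t^P)\le C_\eta<1$, which is exactly the second bullet; the first bullet is immediate since outside the collars $\varphi_t$ equals the chosen (orientation-preserving or -reversing) isometry on each $\pm1$-piece. The delicate points to get right are the uniform collar width and the uniform $C_\eta$, both of which follow from compactness of $[0,t_0]$ and upper semicontinuity of $\Lip_p$.
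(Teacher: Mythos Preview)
Your overall strategy is right, but two steps do not go through as written, and the first of them is precisely the technical content that the paper isolates as a separate lemma.

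\textbf{The uniform Lipschitz bound on labeled-$0$ pants.} You assert that $(t,p)\mapsto\Lip_p(f_t^P)$ is upper semicontinuous and then conclude by compactness. Upper semicontinuity in~$p$ for fixed~$t$ is fine, but joint upper semicontinuity in $(t,p)$ does not follow from a continuous (even $C^0$-uniform) choice of $t\mapsto f_t^P$: local Lipschitz constants are not upper semicontinuous under uniform convergence (think of $x\mapsto n^{-1}\sin(n^2x)$). Corollary~\ref{cor:Lip-pants} gives, for each fixed~$t$, \emph{some} $1$-Lipschitz equivariant map with $\Lip_p<1$ off the boundary, but no quantitative bound and no canonical choice whose local Lipschitz constants vary well. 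This is exactly why the paper proves a separate Lemma~\ref{lem:pantsfold-uniform}: there $\varphi_t$ is built as an explicit perturbation of a single map $\varphi_0$ via the averaging of Lemma~\ref{lem:partofunity}, and the key device is to make $\varphi_0$ (and the piece $\varphi_t^0$) factor through the closest-point projection onto $\partial N_t$ in a collar, so that its local Lipschitz constant there is $1/\cosh\eta<1$; this strict inequality creates the slack needed to absorb the correction term $\Lip_p(\psi^i)R_t(p)\to 0$. Your appeal to ``compactness of the relevant pants moduli'' hides this work.

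\textbf{Fixing the cuff discrepancies by averaging.} This step is problematic on two counts. First, whenever a cuff borders a pants labeled $\pm1$, the adjacent plate map is an isometry with $\Lip_p\equiv 1$; the Leibniz estimate then yields only $\Lip_p(\varphi_t)\le 1+\Lip_p(\psi_i)\,R_t(p)$ in the collar, and any positive $R_t(p)$ pushes you above~$1$ --- there is no slack to absorb it, however small $t_0$ is. Second, modifying $\varphi_t$ on the portion of the collar lying inside a labeled-$\pm1$ pants destroys the isometry property there and violates the first bullet. The paper avoids both problems by leaving $j_t$ and the plate maps alone and instead precomposing the folding $\rho'_t$ of~$j_t$ with earthquakes along the cuffs of length $-\delta_t(\ell)$, yielding $\rho_t$; the plate maps then glue \emph{exactly} to a continuous $\varphi_t$, which is automatically $1$-Lipschitz and an isometry on each $\pm1$ piece. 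This is the same trick as in Proposition~\ref{prop:signs}, but applied to $\rho_t$ (which is yours to choose) rather than to $j_t$ (which here is prescribed).
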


Proposition~\ref{prop:unif-prop-signs} is based on the following uniform version of Corollary~\ref{cor:Lip-pants}.

\begin{lemma}\label{lem:pantsfold-uniform}
Let $\Gamma=\langle\alpha,\beta,\gamma \,|\, \alpha\beta\gamma=1\rangle$ be the fundamental group of a pair of pants $\Sigma$, with $\alpha, \beta, \gamma$ corresponding to the boundary loops.
Consider two continuous families $(j_t)_{t\geq 0},(\rho_t)_{t\geq 0}\subset\Hom(\Gamma,\PSL)$ of representations with $j_t$ geometric (Definition~\ref{def:geomrep}), $\rho_t$ nongeometric, and
$$\big(\lambda_{j_t}(\alpha),\lambda_{j_t}(\beta),\lambda_{j_t}(\gamma)\big) = \big(\lambda_{\rho_t}(\alpha),\lambda_{\rho_t}(\beta),\lambda_{\rho_t}(\gamma)\big)$$
for all $t\geq 0$.
Then there exists a family of $1$-Lipschitz, $(j_t,\rho_t)$-equivariant maps $\varphi_t : \HH^2\rightarrow\HH^2$, defined for all $t$ in a small interval $[0,t_0]$, with the following property: for any $\eta>0$ there exists $C<1$ such that $\Lip_p(\varphi_t)\leq C$ for any $t\in [0,t_0]$ and any $p\in\HH^2$ whose image in $j_t(\Gamma)\backslash\HH^2$ lies at distance $\geq\eta$ from the boundary of the convex core.
\end{lemma}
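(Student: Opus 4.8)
The plan is to realize $\varphi_t$ as a constant-weight average of two folding maps coming from two \emph{distinct} triskelion laminations, and to extract the uniform bound from the transversality of those laminations together with a compactness argument in the pair $(t,p)$.

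First I would dispose of the abelian case exactly as in the proof of Corollary~\ref{cor:Lip-pants}: if $\rho_t$ is abelian it is the abelianization of a folding $\rho_t'$, and postcomposing a $1$-Lipschitz $(j_t,\rho_t')$-equivariant map with the projection onto the $\rho_t$-invariant geodesic along horospheres produces a $1$-Lipschitz $(j_t,\rho_t)$-equivariant map; this operation preserves continuity in~$t$ and the local Lipschitz estimates, so it suffices to treat the nonabelian case. Next, by Lemma~\ref{lem:pantsfold} the representation $\rho_0$ is obtained from $j_0$ by folding along at least four of the eight triskelion laminations of~$\Sigma$; I fix two distinct such laminations $\Upsilon,\Upsilon'$. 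For $t$ in a small enough interval $[0,t_0]$ these same two laminations realize $\rho_t$ as a folding of~$j_t$: indeed, off the ``wall'' where one boundary length equals the sum of the other two, \emph{every} triskelion works (Lemma~\ref{lem:pantsfold}), while on the wall the two nonabelian classes vary continuously and stay apart, so the set of triskelions realizing the class of $\rho_t$ is locally constant along the family. Folding $j_t$ along $\Upsilon$ gives a $1$-Lipschitz folding map equivariant for $(j_t,g_t\rho_t g_t^{-1})$ with $g_t\in\PGL$ unique (as $\rho_t$ is nonabelian) and continuous in~$t$; postcomposing it with $g_t^{-1}$ yields a continuous family of $1$-Lipschitz, $(j_t,\rho_t)$-equivariant folding maps $F_t:\HH^2\to\HH^2$, each an isometry in restriction to every plate of~$\widetilde\Upsilon$. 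Likewise I obtain $F_t':\HH^2\to\HH^2$ from~$\Upsilon'$.

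I then set $\varphi_t := \tfrac12 F_t + \tfrac12 F_t'$, the average of Section~\ref{subsec:rem-Lip} for the constant partition of unity $\psi_0\equiv\psi_1\equiv\tfrac12$. Averaging preserves $(j_t,\rho_t)$-equivariance, and since $\Lip_p(\psi_i)=0$ the Leibniz rule of Lemma~\ref{lem:partofunity} gives
$$\Lip_p(\varphi_t) \le \tfrac12\,\Lip_p(F_t)+\tfrac12\,\Lip_p(F_t') \le 1$$
for every~$p$, so $\varphi_t$ is $1$-Lipschitz and depends continuously on~$t$. The crux is the uniform strict contraction in the $\eta$-thick part. Since $\widetilde\Upsilon\neq\widetilde\Upsilon'$, away from the three boundary geodesics of the convex core these laminations share no leaf and meet transversally; I would argue from this that at every $p$ projecting at distance $\geq\eta$ from the boundary of the convex core the ``linearizations'' of $F_t$ and $F_t'$ at~$p$ (the plate-isometries, or their restrictions to the relevant leaf) are distinct — if they agreed, $F_t$ and $F_t'$ would coincide on an open subset of a common plate, and then, since one of the two maps is already creased along a leaf of its lamination crossing the interior of that plate while the other is a local isometry there, and since a folding map with nonabelian holonomy is determined by its restriction to any one plate together with its pleating combinatorics, the two maps would have to be equal, contradicting $\widetilde\Upsilon\neq\widetilde\Upsilon'$. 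Granting this, the barycentric behaviour of the averaging procedure yields $\Lip_p(\varphi_t)<1$, with a quantitative gain controlled from below by the angle $\theta_t(p)>0$ between the two linearizations (with a uniform gain when their orientations differ, which is automatic whenever $p$ lies in plates of $\widetilde\Upsilon$ and $\widetilde\Upsilon'$ carrying opposite colors). Finally, intersecting $[0,t_0]$ with a fundamental domain of the $\eta$-thick part gives a compact set on which $(t,p)\mapsto\theta_t(p)$ is positive; upper semicontinuity of $p\mapsto\Lip_p$, continuity of the families, and the finiteness of the set of triskelion laminations preclude any degeneration of this angle, so $\theta_t(p)\geq\theta_0>0$ there, whence a single $C=C(\eta)<1$ with $\Lip_p(\varphi_t)\leq C$ for all $t\in[0,t_0]$ and all such~$p$.

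I expect the main obstacle to be precisely the last paragraph: proving that the two folding maps have everywhere-distinct linearizations on the thick part and that the resulting angular defect admits a positive lower bound uniform in~$t$. This is where the combinatorial rigidity of triskelion foldings — finiteness of the triskelions, their mutual transversality, and uniqueness of the folding map associated with a given nonabelian holonomy and pleating lamination — has to be used in an essential way; the rest (reduction to the nonabelian case, the averaging, the Leibniz rule, and the compactness step) is soft.
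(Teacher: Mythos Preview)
Your approach --- averaging two folding maps to produce $\varphi_t$ directly --- is attractive, but the central claim fails: the barycentric average of two $1$-Lipschitz maps need not have local Lipschitz constant strictly below~$1$ at points where the maps disagree. Concretely, suppose that on a region $R$ (a component of $\HH^2\smallsetminus(\widetilde\Upsilon\cup\widetilde\Upsilon')$) the folding maps $F_t, F_t'$ restrict to orientation-preserving isometries $g\neq g'$ with $h:=g^{-1}g'$ hyperbolic. Then on~$R$ the average is $\varphi_t(q)=g\big(\text{midpoint of }q\text{ and }h(q)\big)$, and on the translation axis of~$h$ the map $q\mapsto\text{midpoint}(q,h(q))$ is a pure translation by $\lambda(h)/2$, hence an isometry; so $\Lip_p(\varphi_t)=1$ for $p$ on that axis inside~$R$. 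Even more simply, if $\Upsilon$ and $\Upsilon'$ share a leaf~$\ell_0$ (this happens whenever they differ in spiraling direction at only one cuff), then both $F_t$ and $F_t'$ carry $\ell_0$ isometrically onto the same target geodesic, and so does their midpoint; thus $\Lip_p(\varphi_t)=1$ along~$\ell_0$, which runs through the interior of the convex core. Your ``angle between linearizations'' does not rescue this: for $p$ on the axis of a hyperbolic~$h$, the derivatives $dg_p$ and $dg'_p$ agree after parallel transport along the geodesic from $g(p)$ to $g'(p)$, so the angle you invoke vanishes exactly where contraction fails. The Euclidean intuition misleads here: the linear part of an affine isometry of~$\R^n$ is a global invariant, but in~$\HH^2$ two distinct isometries sharing an axis have ``the same derivative'' along that axis.

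The paper circumvents this by first invoking Corollary~\ref{cor:Lip-pants}, whose proof uses the stretch-locus theory (Lemma~\ref{lem:stretchlocus}) rather than an explicit average: the existence of folding maps along several transverse triskelions forces the stretch lamination~$\widetilde\Lambda$ to lie in the boundary, and Lemma~\ref{lem:stretchlocus} then \emph{produces} a single map $f_0$ with $\Lip_p(f_0)<1$ off the boundary. For $t>0$ the paper perturbs~$f_0$ by a partition-of-unity argument (balls $B^i$ in the interior, plus a collar where one projects onto the boundary): each piece $\varphi_t^i$ already inherits a uniform bound $\Lip_p<1$ from~$f_0$, and the Leibniz corrections $\Lip_p(\psi_t^i)\,R_t(p)$ tend to~$0$ uniformly as $t\to 0$. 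The averaging here serves only to \emph{glue} maps that are individually strictly contracting, not to manufacture strict contraction from isometric pieces; that latter step genuinely needs the structural input of Lemma~\ref{lem:stretchlocus}, which your argument is trying to bypass.
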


\begin{proof}[Proof of Lemma~\ref{lem:pantsfold-uniform}]
By Corollary~\ref{cor:Lip-pants}, there exists a $1$-Lipschitz, $(j_0,\rho_0)$-equiva\-riant map $f_0 : \HH^2\rightarrow\HH^2$ such that $\Lip_p(f_0)<1$ for any $p\in\HH^2$ whose image in $j_0(\Gamma)\backslash\HH^2$ does not belong to the boundary of the convex core.
If $(j_t,\rho_t)=(j_0,\rho_0)$ for all~$t$, then we may take $\varphi_t=f_0$.
In the general case, we shall build $\varphi_t$ as a small deformation of~$f_0$ in restriction to the preimage of the convex core of $j_t(\Gamma)\backslash\HH^2$.

Choose $\Delta>0$ so that for all small $t\geq 0$, the $2\Delta$-neighborhoods of the boundary components of the convex core of the hyperbolic surface $j_t(\Gamma)\backslash\HH^2$ are disjoint.
Choose a small $\delta\in(0,\Delta/2)$ and let $\sigma_{\delta} : \R^+\rightarrow \R^+$ be the function that satisfies
$$\sigma_{\delta}(\eta) = \left \{ \begin{array}{lll}
0 & \mathrm{for} & 0 \leq \eta \leq 2\delta,\\
\Delta - 2\delta & \mathrm{for} & \eta = \Delta,\\
\eta & \mathrm{for} & \eta \geq 2\Delta 
\end{array} \right .$$
and is affine on $[2\delta,\Delta]$ and $[\Delta,2\Delta]$ (Figure~\ref{fig:fonction}).
\begin{figure}[ht!]
\labellist
\small\hair 2pt
\pinlabel {$0$} at 5 3
\pinlabel {$2\delta$} at 22 3
\pinlabel {$\Delta$} at 45 3
\pinlabel {$2\Delta$} at 83 3
\pinlabel {$\eta$} at 105 16
\pinlabel {$\sigma_{\delta}(\eta)$} at 25 93
\endlabellist
\centering
\includegraphics[width=4cm]{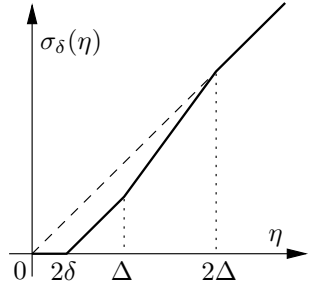}
\caption{The function $\sigma_{\delta}$}
\label{fig:fonction}
\end{figure}
Note that $\sigma_{\delta}$ is $(1+o(1))$-Lipschitz as $\delta\rightarrow 0$, and $1$-Lipschitz away from $[\Delta,2\Delta]$.
For any $t\geq 0$, let $N_t\subset\HH^2$ be the preimage of the convex core of $j_t(\Gamma)\backslash\HH^2$, and let $\pi_t : \HH^2\rightarrow N_t$ be the closest-point projection; it is $1$-Lipschitz.
We set
$$\varphi_0 := f_0 \circ J_{\delta} \circ \pi_0 ,$$
where $J_{\delta}$ is the homotopy of~$\HH^2$ taking any point at distance $\eta\leq 2\Delta$ from a boundary component $\ell_0$ of~$N_0$, to the point at distance $\sigma_{\delta}(\eta)$ from~$\ell_0$ on the same perpendicular ray to~$\ell_0$, leaving other points unchanged.
By construction, in restriction to the $2\delta$-neighborhood of~$\partial N_0$, the map $\varphi_0$ factors through the closest-point projection onto $\partial N_0$.
The function $p\mapsto\Lip_p(f_0)$ is $j_0(\Gamma)$-invariant, upper semicontinuous, and $<1$ on $\HH^2\smallsetminus\partial N_0$, hence bounded away from~$1$ when $p\in N_0$ stays at distance $\geq\Delta-2\delta$ from $\partial N_0$.
This implies that if we have chosen $\delta$ small enough (which we shall assume from now on), then $\Lip(\varphi_0)=1$ and $\Lip_p(\varphi_0)<1$ for all $p$ in the interior of~$N_0$.
For $t>0$, we construct $\varphi_t$ as a deformation of~$\varphi_0$ via a partition of unity, as follows.

Let $\mathcal{U}_t^{\delta}\subset N_t$ be the $\delta$-neighborhood of $\partial N_t$ and $N_t^{\delta}:=N_t\smallsetminus\mathcal{U}_t^{\delta}$ its complement in~$N_t$; we define $\mathcal{U}_t^{2\delta}$ similarly.
Choose a $1$-Lipschitz, $(j_t,\rho_t)$-equivariant map $\varphi_t^0 : \mathcal{U}_t^{2\delta}\rightarrow\nolinebreak\HH^2$ factoring through the closest-point projection onto~$\partial N_t$ and taking any boundary component $\ell_t$ of~$N_t$, stabilized by a cyclic subgroup $j_t(S)$ of $j_t(\Gamma)$, isometrically to the translation axis of $\rho_t(S)$ in~$\HH^2$.
Up to postcomposing each $\varphi_t^0$ with an appropriate shift along the axis of $\rho_t(S)$, we may assume that $\varphi_t^0(p)\rightarrow\varphi_0(p)$ for any $p\in\mathcal{U}_0^{2\delta}$ as $t\rightarrow 0$ (recall that the restriction of $\varphi_0$ to any boundary component of~$N_0$ is an isometry).

Let $B^1,\dots,B^n\subset N_0$ be balls of~$\HH^2$, each projecting injectively to $j_0(\Gamma)\backslash\HH^2$, disjoint from a neighborhood of $\partial N_0$, and such that
$$N_0^{\delta} \subset j_0(\Gamma) \cdot \bigcup_{i=1}^n B^i .$$
For $1\leq i \leq n$, let $\varphi^i_t : j_t(\Gamma)\cdot B^i\rightarrow\HH^2$ be the $(j_t,\rho_t)$-equivariant map that agrees with $\varphi_0$ on~$B^i$.
By construction, for all $1\leq i\leq n$ (\resp for~$i=0$) and for all $p\in j_0(\Gamma)\cdot B^i$ (\resp $p\in\mathcal{U}_0^{2\delta}$) we have $\varphi_t^i(p)\rightarrow\varphi_0(p)$ as $t\rightarrow 0$, uniformly for $p$ in any compact set.
However, the maps $\varphi_t^i$, for $0\leq i\leq n$, may not agree at points where their domains overlap.
The goal is to paste them together by the procedure described in Section~\ref{subsec:rem-Lip}, using a $j_t(\Gamma)$-invariant partition of unity $(\psi^i_t)_{0\leq i\leq n}$ that we now construct.

Let $\psi^0_t:\HH^2\rightarrow [0,1]$ be the function supported on~$\mathcal{U}_t^{2\delta}$ that takes any point at distance $\eta$ from $\partial N_t$ to $\tau(\eta)\in[0,1]$, where $\tau([0,\delta])=1$, where $\tau([2\delta,+\infty))=0$, and where $\tau$ is affine on $[\delta,2\delta]$. 
Let $\psi^1,\dots, \psi^n : \HH^2\rightarrow [0,1]$ be $j_0(\Gamma)$-invariant Lipschitz functions inducing a partition of unity on a neighborhood of~$N_0^{\delta}$, with $\psi^i$ supported in $j_0(\Gamma)\cdot B^i$. 
Since $N_t$ has a compact fundamental domain for $j_t(\Gamma)$ that varies continuously in~$t$ (for instance a right-angled octagon), for small enough~$t$ we have
$$N_t^{\delta} \subset j_t(\Gamma) \cdot \bigcup_{i=1}^n B^i .$$
For $1\leq i\leq n$ and $t\geq 0$, let $\hat{\psi}^i_t : \HH^2\rightarrow [0,1]$ be the $j_t(\Gamma)$-invariant function supported on $j_t(\Gamma)\cdot B^i$ that agrees with $\psi^i$ on~$B^i$.
Then $\sum_{i=1}^n \hat{\psi}^i_t=1+o(1)$ as $t\rightarrow 0$, with an error term uniform on~$N_t^{\delta}$. 
Therefore the functions
$$\psi^0_t \quad\quad\mathrm{and}\quad\quad \psi^i_t := (1-\psi^0_t) \, \frac{\hat{\psi}^i_t}{\sum_{k=1}^n \hat{\psi}^k_t} \ :\ \HH^2 \longrightarrow [0,1]$$
for $1\leq i \leq n$ form a $j_t(\Gamma)$-invariant partition of unity of~$N_t$, subordinated to the covering $\mathcal{U}_t^{2\delta}\cup j_t(\Gamma)\cdot B^1\cup\dots\cup j_t(\Gamma)\cdot B^n\supset N_t$, and are all $L$-Lipschitz for some $L>0$ independent of $i$ and~$t$.

For $t\geq 0$, let $\varphi_t:=\sum_{i=0}^n \psi^i_t\,\varphi^i_t : N_t\rightarrow\HH^2$ be the averaged map defined in Section~\ref{subsec:rem-Lip}: it is $(j_t,\rho_t)$-equivariant by construction.
We extend it to a map $\varphi_t : \HH^2\rightarrow\HH^2$ by precomposing with the closest-point projection $\pi_t :\nolinebreak\HH^2\rightarrow\nolinebreak N_t$.
We claim that the maps $\varphi_t$ satisfy the conclusion of Lemma~\ref{lem:pantsfold-uniform}.
Indeed, by Lemma~\ref{lem:partofunity}, for any $t\geq 0$ and $p$ in the interior of~$N_t$,
\begin{equation}\label{eqn:Leibniz}
\Lip_p(\varphi_t)\leq \sum_{i\in I_t(p)} \big(\Lip_p(\psi_t^i)\,R_t(p) + \psi_t^i(p)\,\Lip_p(\varphi_t^i)\big) ,
\end{equation}
where $I_t(p)$ is the set of indices $0\leq i\leq n$ such that $p$ belongs to the support of~$\psi_t^i$, and $R_t(p)\geq 0$ is the diameter of the set $\{ \varphi_t^i(p)\,|\,i\in I_t(p)\} $.
Let $\eta>0$ be the distance from $p$ to $\partial N_t$.
If $\eta<\delta$, then $\varphi_t$ coincides on a neighborhood of~$p$ with~$\varphi_t^0$, hence with the closest-point projection onto $\partial N_t$ postcomposed with an isometry of~$\HH^2$, and the right-hand side of \eqref{eqn:Leibniz} reduces to
$$\Lip_p(\varphi_t^0) = \frac{1}{\cosh \eta} < 1$$
(see \cite[(A.9)]{gk13} for instance).
If $\eta\geq\delta$, then the bound on $\Lip_p(\varphi_t^0)$~still~holds, and $\Lip_p(\varphi_t^i)$ for $1\leq i\leq n$ can also be uniformly bounded away from~$1$: indeed, $\sup_{q\in B^i}\Lip_q(\varphi_t^i)<1$ since $B^i$ is disjoint from a neighborhood of $\partial N_0$ and the local Lipschitz constant is upper semicontinuous, and we argue by equivariance.
Moreover, all the other contributions to \eqref{eqn:Leibniz} are small: $R_t(p)\rightarrow 0$ as $t\rightarrow 0$, uniformly in~$p$, and $\Lip_p(\psi_t^i)$ is bounded independently of $p,i,t$ (by~$L$).
Therefore, for small $t$ there exists $C<1$, independent of $p$ and~$t$, such that $\Lip_p(\varphi_t)\leq\nolinebreak C$.
This treats the case when $p\in N_t$.
To conclude, we note that on a neighborhood of any $p\in\HH^2\smallsetminus N_t$ the map $\varphi_t$ coincides with the closest-point projection onto $\partial N_t$ postcomposed with an isometry of~$\HH^2$, hence $\Lip_p(\varphi_t)=1/\cosh\eta<1$ where $\eta=d(p,\partial N_t)$.
\end{proof}

\begin{proof}[Proof of Proposition~\ref{prop:unif-prop-signs}]
Let $\Upsilon$ be a lamination of~$\Sigma_g$ consisting of all the cuffs of~$\Pi$ together with a triskelion lamination inside each pair of pants labeled~$0$.
Let $c : \Sigma_g\smallsetminus\Upsilon\rightarrow\{ -1,1\}$ be a coloring taking the value $1$ (\resp $-1$) on each pair of pants labeled $1$ (\resp $-1$), and both values on each pair of pants labeled~$0$.
For any $t\geq\nolinebreak 0$, let $\rho'_t$ be the folding of $j_t$ along~$\Upsilon$ with coloring~$c$.
We now argue similarly to the proof of Proposition~\ref{prop:signs} in Section~\ref{subsec:provesigns}: for each pair of pants $P$ in~$\Pi$, choose a subgroup $\Gamma^P$ of~$\Gamma_g$ which is conjugate to $\pi_1(P)$, and for any $t\geq 0$ a lift $\widetilde{P}_t\subset\HH^2$ of the convex core of $j_t(\Gamma^P)\backslash\HH^2$.
If $P$ is labeled $1$ (\resp $-1$), then for any $t\geq 0$ the restrictions of $j_t$ and $\rho'_t$ to~$\Gamma_P$ are conjugate by some orientation-preserving (\resp orientation-reversing) isometry $\varphi_t^P$ of~$\HH^2$.
If $P$ is labeled~$0$, then by Lemma~\ref{lem:pantsfold-uniform} there is a family of $1$-Lipschitz, $(j_t|_{\Gamma^P},\rho'_t|_{\Gamma^P})$-equivariant maps $\varphi_t^P : \HH^2\rightarrow\HH^2$, defined for all $t$ in a small interval $[0,t_0]$, with the following property: for any $\eta>0$ there exists $C<1$ such that $\Lip_p(\varphi_t^P)\leq C$ for all $t\in [0,t_0]$ and all $p\in\widetilde{P}_t$ at distance $\geq\eta$ from~$\partial\widetilde{P}_t$.
The collection of all maps~$\varphi_t^P$, extended $(j_t,\rho'_t)$-equivariantly, piece together to yield a map $\varphi_t^{\ast} : \HH^2\smallsetminus\mathcal{C}_t\rightarrow\HH^2$, where $\mathcal{C}_t$ is the union of all geodesics of~$\HH^2$ projecting to cuffs of~$\Pi$ in $j_t(\Gamma_g)\backslash\HH^2\simeq\Sigma_g$.
The obstruction to extending $\varphi_t^{\ast}$ by continuity on each geodesic $\ell_t\subset\mathcal{C}_t$ is that the maps on either side of~$\ell_t$ may disagree by a constant shift along~$\ell_t$ if $\ell_t$ separates two pairs of pants labeled $(\pm 1,0)$ or $(0,0)$.
This discrepancy $\delta(\ell_t)\in\R$ is the same on the whole $j_t(\Gamma_g)$-orbit of~$\ell_t$.
To correct it, we precompose the folding $\rho'_t$ of~$j_t$ with an earthquake supported on the cuff associated with~$\ell_t$ (in the $j_t$-metric), of length $-\delta(\ell_t)$.
We repeat for each $j_t(\Gamma_g)$-orbit in~$\mathcal{C}_t$, and eventually obtain a new folded representation~$\rho_t$.
By construction, there is a family of $1$-Lipschitz, $(j_t,\rho_t)$-equivariant maps $\varphi_t : \HH^2\rightarrow\HH^2$ satisfying Proposition~\ref{prop:unif-prop-signs}, obtained simply by gluing together isometric translates of the~$\varphi_t^P$.
\end{proof}

\section{Surjectivity of the two projections}\label{sec:surj}

In this section we prove Theorem~\ref{thm:domin}.
We first construct uniformly lengthening deformations of surfaces with boundary (Section~\ref{subsec:boundary}), then glue these together according to combinatorics given by Proposition~\ref{prop:signs} (Sections \ref{subsec:hallali1} and~\ref{subsec:hallali2}).
Section~\ref{subsec:proof-lemma} is devoted to the proof of a technical lemma.

\subsection{Uniformly lengthening deformations of compact hyperbolic surfaces with boundary}\label{subsec:boundary}

Our two main tools to prove Theorem~\ref{thm:domin} are Proposition~\ref{prop:signs} and the following lemma.

\begin{lemma}\label{lem:surfwithbound}
Let $\Gamma$ be the fundamental group and $j_0\in\Hom(\Gamma,\PSL)$ the holonomy of a compact, connected, hyperbolic surface $\Sigma$ with nonempty geodesic boundary.
Then there exist $t_0>0$ and a continuous family of representations $(j_t)_{0\leq t \leq t_0}$ with the following properties:
\begin{itemize}
  \item[$(a)$] $\lambda_{j_0}(\gamma)=(1-t)\,\lambda_{j_t}(\gamma)$ for any $t\in [0,t_0]$ and any $\gamma\in\Gamma$ corresponding to a boundary component of~$\Sigma$;
  \item[$(b)$] $\sup_{\gamma\in\Gamma\smallsetminus \{1\}}\, \frac{\lambda_{j_0}(\gamma)}{\lambda_{j_t}(\gamma)}<1$ for any $t\in (0,t_0]$;
  \item[$(c)$] $j_t(\gamma)=j_0(\gamma)+O(t)$ for any $\gamma\in\Gamma$ as $t\rightarrow 0$, where both sides are seen as $2\times 2$ real matrices with determinant~$1$;
  \item[$(d)$] for any compact subset $K$ of~$\HH^2$ projecting to the interior of the convex core of $j_0(\Gamma)\backslash\HH^2$, there exists $L>0$ such that
  $$d(p,f_t(p))\leq Lt$$
  for any $p\in K$, any $t\in[0,t_0]$, and any $1$-Lipschitz, $(j_t,j_0)$-equivariant map $f_t : \HH^2\rightarrow\HH^2$.
\end{itemize}
\end{lemma}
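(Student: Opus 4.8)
The plan is to realize $(j_t)$ as a \emph{uniform strip deformation} of $j_0$, following Thurston's strip construction \cite{thu86}. Fix a finite collection $\mathcal{A}$ of disjoint simple geodesic arcs in $\Sigma$ whose endpoints lie on $\partial\Sigma$ and which cut $\Sigma$ into a union of (topological) disks; one can always choose $\mathcal{A}$ so that every boundary component of $\Sigma$ meets at least one arc, and in fact so that the arcs, together with $\partial\Sigma$, fill $\Sigma$. For a parameter $t\geq 0$, form $\Sigma_t$ by slicing $\Sigma$ along each arc $a\in\mathcal{A}$ and inserting a hyperbolic \emph{strip} --- the $\varepsilon_a(t)$-neighborhood of a bi-infinite geodesic --- glued back in along the two copies of $a$ (with a waist chosen along $a$, say at its midpoint), where $\varepsilon_a(t)=t$ for all $a$. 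Since $\Sigma$ had geodesic boundary and we only add strips in the interior, $\Sigma_t$ is again a complete hyperbolic surface with geodesic boundary, homeomorphic to $\Sigma$; let $j_t$ be its holonomy. Continuity in $t$ and property $(c)$, namely $j_t(\gamma)=j_0(\gamma)+O(t)$, follow because the gluing data depend real-analytically on the strip widths, which are $O(t)$; this is a standard computation with the explicit description of strip maps (compare \cite{thu86} and \cite[\S 4]{dgk13}).

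The key geometric input is the existence of a $1$-Lipschitz, $(j_0,j_t)$-equivariant ``strip map'' $F_t:\widetilde{\Sigma}\to\widetilde{\Sigma}_t$ that is an isometric embedding on the complement of the preimage of $\bigcup_{a}a$ and collapses each lift of an arc $a$ onto the waist of the corresponding strip; see \cite{thu86}. Its inverse branch gives a $1$-Lipschitz $(j_t,j_0)$-equivariant map, and Remark~\ref{rem:Clambda-CLip} then yields $\lambda_{j_0}(\gamma)\leq\lambda_{j_t}(\gamma)$ for all $\gamma$, so the domination is non-strict for free. To upgrade it: a closed geodesic $c$ for $j_t$ of positive length must cross the interior of $\Sigma_t$, hence cross some strip, hence the collapsed geodesic representative on $\Sigma$ is \emph{strictly} shorter, by an amount bounded below in terms of the crossing angle; because the arcs fill, \emph{every} nontrivial $\gamma$ crosses some arc and the deficit $\lambda_{j_t}(\gamma)-\lambda_{j_0}(\gamma)$ is at least a uniform positive multiple of $t\,\lambda_{j_0}(\gamma)$ (using that each crossing contributes $\gtrsim t$ to the length, compare the width computation in \cite[Lem.\,A.?]{gk13} / \cite{thu86}). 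This gives $(b)$, i.e.\ $\sup_{\gamma\neq1}\lambda_{j_0}(\gamma)/\lambda_{j_t}(\gamma)<1$ for $t\in(0,t_0]$. For $(a)$, I instead do the following: each boundary component $\partial_i$ crosses some arcs, so its $j_t$-length grows; by prescribing the strip widths $\varepsilon_a(t)$ \emph{not} uniformly but by solving a system --- there are as many boundary components as linear conditions $\lambda_{j_0}(\partial_i)=(1-t)\lambda_{j_t}(\partial_i)$, and the Jacobian of boundary lengths with respect to strip widths at $t=0$ is nondegenerate when the arcs are chosen so that the ``arc–boundary incidence'' has full rank --- one finds $\varepsilon_a(t)=c_a t+O(t^2)$ with $c_a>0$ realizing exactly $(a)$ for small $t$. (Equivalently: first do the uniform deformation, then post-compose with a small correcting deformation supported near $\partial\Sigma$ to fix boundary lengths; this preserves $(b)$ and $(c)$ if $t_0$ is small.)

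Finally, property $(d)$: fix a compact $K\subset\widetilde{\Sigma}$ projecting into the interior of the convex core, and let $f_t$ be \emph{any} $1$-Lipschitz $(j_t,j_0)$-equivariant map. Since $j_t\to j_0$ in $\Hom(\Gamma,\PSL)$ and $\lambda_{j_0}\leq\lambda_{j_t}$, the optimal Lipschitz constant from $j_t$ to $j_0$ is $1$, so by Lemma~\ref{lem:stretchlocus} (applied with the roles $(j,\rho)=(j_t,j_0)$; $j_t$ is convex cocompact) there is a nonempty stretch locus, and on its complement some extremal map is locally contracting --- but what I actually need is only the a priori bound $d(p,f_t(p))\leq Lt$. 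For this, compare $f_t$ to the explicit strip map $G_t:\widetilde{\Sigma}_t\to\widetilde{\Sigma}$ constructed above, which satisfies $d(p,G_t(p))=O(t)$ uniformly on $K$ by $(c)$ and the local description of strip maps. Both $f_t$ and $G_t$ are $(j_t,j_0)$-equivariant and $1$-Lipschitz, so $h_t:=f_t^{-1}\circ G_t$ (or rather the comparison through a common target) is a $j_t$-equivariant self-map at bounded distance from the identity; on the cocompact piece $K$, equivariance plus the $1$-Lipschitz bound forces $\sup_{p\in K}d(f_t(p),G_t(p))$ to be controlled by $\sup$ over a compact fundamental domain of $d(f_t,G_t)$, which in turn is $O(t)$ because both maps are $O(t)$-close to the $t=0$ map $F_0=\mathrm{id}$ (using $(c)$ and the fact that a $1$-Lipschitz equivariant map is determined up to bounded error by its values on a net). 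Hence $d(p,f_t(p))\leq d(p,G_t(p))+d(G_t(p),f_t(p))\leq Lt$ on $K$, giving $(d)$.

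\textbf{Main obstacle.} The delicate point is $(d)$: a priori one has \emph{no} uniqueness for the $1$-Lipschitz equivariant map $f_t$, so the estimate $d(p,f_t(p))=O(t)$ cannot be read off from any single explicit construction. The argument must exploit that $\lambda_{j_0}/\lambda_{j_t}$ is uniformly bounded away from $1$ on the \emph{non}-boundary curves (property $(b)$) together with convex cocompactness, to confine every extremal map near the explicit strip map on compact subsets of the convex core interior --- this is exactly the kind of rigidity packaged in Lemma~\ref{lem:stretchlocus}, and making the comparison quantitative (linear in $t$) on $K$ is where the real work lies.
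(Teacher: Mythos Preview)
Your construction for $(a)$--$(c)$ via strip deformations is essentially the paper's approach, though a few details are garbled: the $1$-Lipschitz map goes from the \emph{enlarged} surface $\Sigma_t$ back to $\Sigma$ by collapsing the inserted strips to arcs (not the other way around), and the paper obtains $(b)$ not by a direct ``uniform deficit per crossing'' estimate but by observing that if the collapsing map $\varsigma_t$ had optimal Lipschitz constant $1$, Lemma~\ref{lem:stretchlocus} would produce a geodesic lamination isometrically preserved by $\varsigma_t$, which is impossible since each $\varsigma_t^{\beta_i}$ preserves only the boundary components other than~$\beta_i$. Your ``solve a linear system for the widths'' step for $(a)$ is fine in spirit; the paper simply treats the boundary components one at a time.

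The real problem is $(d)$, and you have correctly located it but not closed it. Your argument is circular: you want to show that \emph{any} $1$-Lipschitz $(j_t,j_0)$-equivariant $f_t$ stays within $O(t)$ of the identity on~$K$, and you try to do this by comparing $f_t$ to the explicit collapsing map $G_t$ --- but your justification that $d(f_t,G_t)=O(t)$ on a fundamental domain is precisely the assertion that $f_t$ is $O(t)$-close to the identity, which is what you are trying to prove. There is no general principle that two $1$-Lipschitz equivariant maps must be close to each other; Lemma~\ref{lem:stretchlocus} gives qualitative information about where the stretch locus sits, not a quantitative $O(t)$ displacement bound.

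The paper's argument for $(d)$ is quite different and worth learning. For $p$ in the interior of the convex core, pick three boundary axes $\mathcal{A}_{j_0(\gamma_i)}$ so that $p$ lies in the triangle formed by its orthogonal projections to them. The hyperbolic identity
\[
\sinh\!\Big(\tfrac{1}{2}\,d(p,h\cdot p)\Big)=\sinh\!\Big(\tfrac{1}{2}\,\lambda(h)\Big)\cdot\cosh d(p,\mathcal{A}_h)
\]
turns the $1$-Lipschitz condition $d(f_t(p),j_0(\gamma_i)\cdot f_t(p))\leq d(p,j_t(\gamma_i)\cdot p)$ into
\[
\cosh d\big(f_t(p),\mathcal{A}_{j_0(\gamma_i)}\big)\leq \cosh d\big(p,\mathcal{A}_{j_0(\gamma_i)}\big)+O(t),
\]
using $(c)$ to compare $\lambda_{j_t}(\gamma_i)$ and the axes to their $t=0$ values. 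Since $d(p,\mathcal{A}_{j_0(\gamma_i)})>0$ one may invert the $\cosh$, and the three resulting inequalities trap $f_t(p)$ in a curvilinear triangle about $p$ of diameter $O(t)$. This directly yields the uniform bound on $K$, with constants independent of the particular map~$f_t$. That trigonometric trapping is the missing idea in your proposal.
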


As in Section~\ref{subsec:pants}, the convex core of $j_0(\Gamma)\backslash\HH^2$ naturally identifies with~$\Sigma$.
The idea is to construct the representations $j_t$ as holonomies of hyperbolic surfaces obtained from $j_0(\Gamma)\backslash\HH^2$ by \emph{strip deformations}.
This type of deformation was first introduced by Thurston \cite[proof of Lem.\,3.4]{thu86}; we refer to \cite{pt10} and \cite{dgk13} for more details.

\begin{proof}
We first explain how to lengthen one boundary component $\beta$ of~$\Sigma$.
Choose a finite collection of disjoint, biinfinite geodesic arcs $\alpha_1,\dots, \alpha_n \subset j_0(\Gamma)\backslash\HH^2$, each crossing $\beta$ orthogonally twice, and subdividing the convex core $\Sigma$ into right-angled hexagons and one-holed right-angled bigons.
Along each arc~$\alpha_i$, following \cite{thu86}, slice $j_0(\Gamma)\backslash\HH^2$ open and insert a strip $A_i$ of~$\HH^2$, bounded by two geodesics, with narrowest cross section at the midpoint of $\alpha_i\cap\Sigma$ (see Figure~\ref{fig:deformation}).
This yields a new complete hyperbolic surface, with a compact convex core, equipped with a natural $1$-Lipschitz map $\varsigma_t^{\beta}$ to $j_0(\Gamma)\backslash\HH^2$ obtained by collapsing the strips $A_i$ back to lines.
Note that the image under~$\varsigma_t^{\beta}$ of the new convex core is \emph{strictly contained} in~$\Sigma$ (see Figure~\ref{fig:deformation}).
The geodesic corresponding to~$\beta$ is longer in the new surface than in~$\Sigma$; by adjusting the widths of the strips~$A_i$, we may assume that the ratio of lengths is $\frac{1}{1-t}$.
Note that the appropriate widths for this ratio are in $O(t)$ as $t\rightarrow 0$.
All lengths of geodesics corresponding to boundary components other than~$\beta$ are unchanged.

\begin{figure}[ht!]
\labellist
\small\hair 2pt
\pinlabel {$\alpha_i$} at 25 48
\pinlabel {$\beta$} at 12 12
\pinlabel {$\Sigma$} at 72 68
\pinlabel {$\varsigma_t^{\beta}$} at 130 10
\pinlabel {$A_i$} at 170 48
\endlabellist
\centering
\includegraphics[width=11cm]{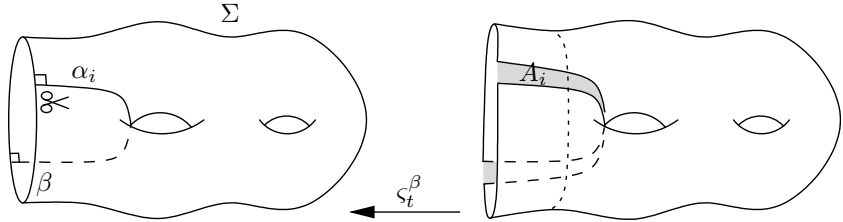}
\caption{A strip deformation. In the source of the collapsing map $\varsigma_t^{\beta}$ we show the new peripheral geodesic, dotted.}
\label{fig:deformation}
\end{figure}

Repeat the construction, iteratively, for all boundary components $\beta_1,\dots, \beta_r$ of~$\Sigma$, in some arbitrary order: we thus obtain a new complete hyperbolic surface $j_t(\Gamma)\backslash\HH^2$, with a compact convex core~$\Sigma_t$, such that $j_t$ satisfies~(a).
We claim that $j_t$ also satisfies~(b).
Indeed, consider the $1$-Lipschitz map $\varsigma_t:=\varsigma^{\beta_r}_t\circ\dots\circ\varsigma^{\beta_1}_t$ from $\Sigma_t$ to~$\Sigma$.
If $1$ were its optimal Lipschitz constant, then by Lemma~\ref{lem:stretchlocus} there would exist a geodesic lamination of~$\Sigma_t$ whose leaves are isometrically preserved by~$\varsigma_t$.
But this is not the case here since for every~$i$, the map $\varsigma^{\beta_i}_t$ does not isometrically preserve any geodesic lamination except the boundary components other than~$\beta_i$.
Therefore $\varsigma_t$ has Lipschitz constant $<1$, which implies~(b) by Remark~\ref{rem:Clambda-CLip}.

Up to replacing each $j_t$ with a conjugate under $\PSL$, we may assume that (c) holds.
Indeed, it is well known that there exist elements $\gamma_1,\dots,\gamma_n\in\Gamma$ whose length functions form a smooth coordinate system for $\Hom(\Gamma,\PSL)/\PSL$ near~$[j_0]$ (see \cite[Th.\,2.1]{gx11} for instance).
For any~$i$, the preimage under~$\varsigma_t$ of the closed geodesic of~$\Sigma$ associated with~$\gamma_i$ is obtained by expanding finitely many strips of width $O(t)$, hence
$\lambda_{j_t}(\gamma_i)\leq\lambda_{j_0}(\gamma_i)+O(t)$
as $t\rightarrow 0$.
On the other hand, $\lambda_{j_t}(\gamma_i)\geq\lambda_{j_0}(\gamma_i)$ due to the existence of the $1$-Lipschitz map~$\varsigma_t$.
Therefore, $d'(j_0,j_t)=O(t)$ for any smooth metric $d'$ on a neighborhood of $[j_0]$ in $\Hom(\Gamma,\PSL)/\PSL$.

To check~(d), we use a perturbative version of the argument that a $j_0(\Gamma)$-invariant, $1$-Lipschitz map must be the identity on the preimage $N_0\subset\HH^2$ of the convex core $\Sigma$ of $j_0(\Gamma)\backslash\HH^2$.
For any hyperbolic element $h\in\PSL$, with translation axis $\mathcal{A}_h\subset\HH^2$, and for any $p\in\HH^2$, a classical formula gives
\begin{equation}\label{eqn:trigo}
\sinh\Big(\frac{d(p,h\cdot p)}{2}\Big) = \sinh\Big(\frac{\lambda(h)}{2}\Big) \cdot \cosh d(p,\mathcal{A}_h)
\end{equation}
(see Figure~\ref{fig:quadrilatere}, left).
Consider $p\in\HH^2$ in the interior of~$N_0$.
We can find three translation axes $\mathcal{A}_{j_0(\gamma_1)},\mathcal{A}_{j_0(\gamma_2)},\mathcal{A}_{j_0(\gamma_3)}\subset\partial N_0$ of elements of $j_0(\Gamma)$ such that if $q_i$ denotes the projection of $p$ to~$\mathcal{A}_{j_0(\gamma_i)}$, then $p$ belongs to the interior of the triangle $q_1 q_2 q_3$.
For any $t\geq 0$ and any $1$-Lipschitz, $(j_t,j_0)$-equivariant map $f_t : \HH^2\rightarrow\HH^2$,
$$d\big(f_t(p),j_0(\gamma_i)\cdot f_t(p)\big) \leq d(p,j_t(\gamma_i)\cdot p) ,$$
which by \eqref{eqn:trigo} may be written as
$$\sinh\Big(\frac{\lambda_{j_0}(\gamma_i)}{2}\Big) \cdot \cosh d\big(f_t(p),\mathcal{A}_{j_0(\gamma_i)}\big) \leq \sinh\Big(\frac{\lambda_{j_t}(\gamma_i)}{2}\Big) \cdot \cosh d\big(p,\mathcal{A}_{j_t(\gamma_i)}\big) .$$
Since $\lambda_{j_0}(\gamma_i)=\lambda_{j_t}(\gamma_i)+O(t)$ and $d(p, \mathcal{A}_{j_t(\gamma_i)})=d(p, \mathcal{A}_{j_0(\gamma_i)})+O(t)$ by~(c), this implies
$$\cosh d\big(f_t(p),\mathcal{A}_{j_0(\gamma_i)}\big) \leq \cosh d\big(p,\mathcal{A}_{j_0(\gamma_i)}\big) + O(t) ,$$
where the error term does not depend on the choice of the map~$f_t$.
Since $d(p, \mathcal{A}_{j_0(\gamma_i)})>0$, we may invert the hyperbolic cosine:
$$d\big(f_t(p),\mathcal{A}_{j_0(\gamma_i)}\big) \leq d\big(p,\mathcal{A}_{j_0(\gamma_i)}\big) + O(t) .$$
Applied to $i=1,2,3$, this means that $f_t(p)$ belongs to a curvilinear triangle around~$p$ bounded by three hypercycles (curves at constant distance from a geodesic line) expanding at rate $O(t)$ as $t$ becomes positive, hence $d(p,f_t(p))=O(t)$ (see Figure~\ref{fig:quadrilatere}, right).
All estimates $O(t)$ are robust under small perturbations of $p$, hence can be made uniform (and still independent of $f_t$) for $p$ in a compact set~$K$, yielding~(d).
\end{proof}

\begin{figure}[ht!]
\labellist
\small\hair 2pt
\pinlabel {$\lambda(h)$} at 60 17
\pinlabel {$p$} at 3 59
\pinlabel {$h\cdot p$} at 142 60
\pinlabel {$\mathcal{A}_h$} at -5 10
\pinlabel {$d(p,\mathcal{A}_h)$} at 1 30
\pinlabel {$p$} at 210 38
\pinlabel {$\mathcal{A}_{j_0(\gamma_1)}$} at 212 68
\pinlabel {$\mathcal{A}_{j_0(\gamma_2)}$} at 170 30
\pinlabel {$\mathcal{A}_{j_0(\gamma_3)}$} at 234 20
\endlabellist
\centering
\includegraphics[width=11cm]{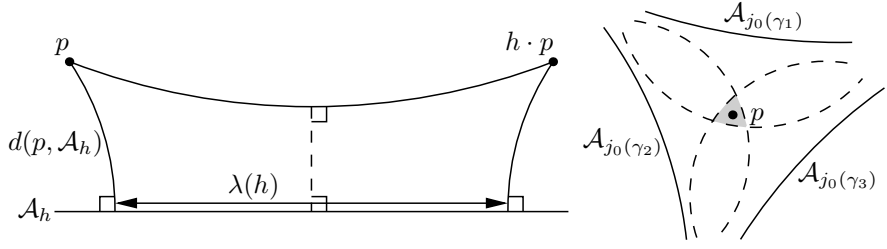}
\caption{Left: A hyperbolic quadrilateral with two right angles. Right: The point $f_t(p)$ belongs to the shaded region.}
\label{fig:quadrilatere}
\end{figure}

\subsection{Gluing surfaces with boundary}\label{subsec:hallali1}

We now prove the first statement of Theorem~\ref{thm:domin}: namely, given $[\rho]\in\Repnfd$, we construct $[j]\in\Repfd$ that strictly dominates~$[\rho]$.

If $\lambda_{\rho}\equiv 0$, then any $[j]\in\Repfd$ strictly dominates~$[\rho]$.
We now suppose that $\lambda_{\rho}\not\equiv 0$.
Proposition~\ref{prop:signs}.(1) then gives us an element $[j_0]\in\Repfd$, a labeled pants decomposition $\Pi$ of~$\Sigma_g$, and, for any $j_0,\rho\in\Hom(\Gamma_g,\PSL)$ in the respective classes $[j_0],[\rho]$ (which we now fix), a $1$-Lipschitz, $(j_0,\rho)$-equivariant map $f :\nolinebreak\HH^2\rightarrow\nolinebreak\HH^2$ that is an orientation-preserving (\resp orien\-tation-reversing) isometry in restriction to any connected subset of~$\HH^2$ projecting to a union of pants labeled $1$ (\resp $-1$) in $j_0(\Gamma_g)\backslash\HH^2\simeq\Sigma_g$, and that satisfies $\Lip_p(f)<1$ for any $p\in\HH^2$ projecting to the interior of a pair of pants labeled~$0$.
Not all pairs of pants are labeled $1$, and not all~$-1$, since $j_0$ and $\rho$ are not conjugate under $\PGL$.
By Remark~\ref{rem:Clambda-CLip}, the class $[j_0]$ dominates $[\rho]$ in the sense that $\lambda(\rho(\gamma))\leq\lambda(j_0(\gamma))$ for all $\gamma\in\Gamma_g$.
Our goal is to use Lemma~\ref{lem:surfwithbound} to modify $j_0$ into a representation $j$ such that $[j]$ \emph{strictly} dominates~$[\rho]$.
For this purpose, we erase all the cuffs that separate two pairs of pants of~$\Pi$ with labels $(1,1)$ or $(-1,-1)$, and write
$$\Sigma_g = \Sigma^{1} \cup \dots \cup \Sigma^{m} ,$$
where $\Sigma^{i}$, for any $1\leq i\leq m$, is a compact surface with boundary that is
\begin{itemize}
  \item either a pair of pants labeled~$0$,
  \item or a full connected component of the subsurface of $\Sigma_g$ made of pants labeled $1$,
  \item or a full connected component of the subsurface of $\Sigma_g$ made of pants labeled $-1$
\end{itemize}
(see Figure~\ref{fig:decomposition}).
The boundary components of the~$\Sigma^{i}$ are the cuffs that separated two pairs of pants of~$\Pi$ with labels $(1,-1)$, $(\pm 1,0)$, or $(0,0)$.

\begin{figure}[ht!]
\labellist
\small\hair 2pt
\pinlabel {$1$} at 40 15
\pinlabel {$1$} at 85 15
\pinlabel {$-1$} at 113 15
\pinlabel {$0$} at 142 15
\pinlabel {$0$} at 168 15
\pinlabel {$1$} at 208 15
\endlabellist
\centering
\includegraphics[width=8cm]{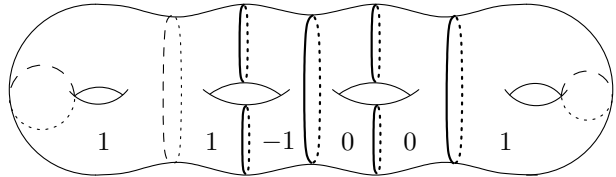}
\caption{A labeled pants decomposition with $m=5$. The boundary components of the $\Sigma^{i}$, $1\leq i\leq 5$, are in bold.}
\label{fig:decomposition}
\end{figure}

Choose a small $\delta>0$ such that in all hyperbolic metrics on~$\Sigma_g$ which are close enough to that defined by~$j_0$, any simple geodesic entering the $\delta$-neighborhood of the geodesic representative of a cuff of~$\Pi$ crosses it.
Let $\mathcal{C}_0\subset\HH^2$ be the union of all geodesic lines of~$\HH^2$ projecting to boundary components of the $\Sigma^{i}$ in $j_0(\Gamma_g)\backslash\HH^2\simeq\Sigma_g$, let $N_0^{\delta}\subset\HH^2$ be the complement of the $\delta$-neighborhood of~$\mathcal{C}_0$, and let $K\subset\HH^2\smallsetminus\mathcal{C}_0$ be a compact set whose interior contains a fundamental domain of~$N_0^{\delta}$ for the action of~$j_0(\Gamma_g)$, with $m$ connected components projecting respectively to $\Sigma^{1},\dots,\Sigma^{m}$.
We apply Lemma~\ref{lem:surfwithbound} to $\Gamma^{i}:=\pi_1(\Sigma^{i})$ and $j_0^{i}:=j_0|_{\Gamma^{i}}$ and obtain continuous families $(j_t^{i})_{0\leq t \leq t_0}\subset\Hom(\Gamma^{i},\PSL)$ of representations, for $1\leq i\leq m$, satisfying properties (a),(b),(c),(d) of Lemma~\ref{lem:surfwithbound}, with a uniform constant $L>0$ for the compact set $K\subset\HH^2\smallsetminus\mathcal{C}_0$.
For any $t\in [0,t_0]$, using~(a), we can glue together the (compact) convex cores of the $j_t^{i}(\Gamma^{i})\backslash\HH^2$ following the same combinatorics as the~$\Sigma^{i}$: this gives a closed hyperbolic surface of genus~$g$, hence a holonomy representation $j_t\in\Hom(\Gamma_g,\PSL)$.
By~(c), up to adjusting the twist parameters, we may assume that
\begin{equation}\label{eqn:jt-j0-close-boundary}
j_t(\gamma) = j_0(\gamma) + O(t)
\end{equation}
for any $\gamma\in\Gamma_g$ as $t\rightarrow 0$, where both sides are seen as $2\times 2$ real matrices with determinant~$1$.
To complete the proof of the first statement of Theorem~\ref{thm:domin}, it is sufficient to prove that for small enough $t>0$,
\begin{equation}\label{eqn:lambda-rho-j_t}
\sup_{\gamma\in (\Gamma_g)_s}\ \frac{\lambda_{\rho}(\gamma)}{\lambda_{j_t}(\gamma)} < 1 ,
\end{equation}
where $(\Gamma_g)_s$ is the set of nontrivial elements of~$\Gamma_g$ corresponding to simple closed curves on~$\Sigma_g$: then $[j]:=[j_t]$ will strictly dominate~$[\rho]$ by Theorem~\ref{thm:contcont}.
Note that $\lambda(j_t(\gamma)) = \lambda(j_t^{i}(\gamma))$ for all $\gamma$ in $\Gamma^{i}$, seen as a subgroup of~$\Gamma_g$; thus (b) gives the control required in \eqref{eqn:lambda-rho-j_t} for simple closed curves \emph{contained in one of the~$\Sigma^{i}$}.
We now explain why the lengths of the other simple closed curves also decrease uniformly, based on (c) and~(d).

For any $t\in (0,t_0]$, let $\mathcal{C}_t\subset\HH^2$ be the union of the lifts to~$\HH^2$ of the simple closed geodesics of $j_t(\Gamma_g)\backslash\HH^2\simeq\Sigma_g$ corresponding to~$\mathcal{C}_0$ and let $N_t^{\delta}$ be the complement of the $\delta$-neighborhood of $\mathcal{C}_t$ in~$\HH^2$.
For $t$ small enough, we can find a fundamental domain $K_t$ of $N_t^{\delta}$ for the action of $j_t(\Gamma_g)$ that is contained in~$K$ and has $m$ connected components.
By (b) and Theorem~\ref{thm:contcont}, for any $1\leq i\leq m$ and $t\in (0,t_0]$ there exists a $(j_t|_{\Gamma^{i}},j_0|_{\Gamma^{i}})$-equivariant map $f_t^{i} : \HH^2\rightarrow\HH^2$ with $\Lip(f_t^{i})<1$.
For small $t>0$, we choose a $(j_t,j_0)$-equivariant map $f_t : (N_t^{\delta}\cup\mathcal{C}_t)\rightarrow\HH^2$ such that
\begin{itemize} 
  \item $f_t=f_t^{i}$ on the component of $K_t$ projecting to~$\Sigma^{i}$, for all $1\leq i\leq m$;
  \item $f_t$ takes any geodesic line in~$\mathcal{C}_t$ to the corresponding line in~$\mathcal{C}_0$, multiplying all distances on it by the uniform factor $(1-t)$.
\end{itemize}
We choose the $f_t$ so that, in addition, for any compact set $K'\subset\HH^2$ there exists $L_1\geq 0$ such that $d(x',f_t(x'))\leq L_1t$ for all small enough $t>0$ and all $x'\in\mathcal{C}_t\cap K'$.
Consider the $(j_t,\rho)$-equivariant map
$$F_t := f \circ f_t :\ (N_t^{\delta} \cup \mathcal{C}_t) \longrightarrow \HH^2 ,$$
where $f : \HH^2\rightarrow\HH^2$ is the $(j_0,\rho)$-equivariant map from the beginning of the proof.
In order to prove \eqref{eqn:lambda-rho-j_t}, it is sufficient to establish the following.

\begin{lemma}\label{lem:hallali}
For small enough $t>0$, there exists $C<1$ such that for all $p,q\in\partial N_t^{\delta}$ lying at distance $\delta$ from a line $\ell_t\subset\mathcal{C}_t$, on opposite sides of~$\ell_t$,
$$d(F_t(p ),F_t(q)) \leq C \, d(p,q).$$
\end{lemma}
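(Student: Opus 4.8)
The plan is to bound $d(F_t(p),F_t(q))$ by three different mechanisms according to the size of $d(p,q)$, and to take for $C$ the worst — all $<1$ — of the resulting ratios. First I would use the $j_t(\Gamma_g)$-equivariance of $F_t$ and of the hypotheses (both sides of the desired inequality change by the same amount under $j_t(\Gamma_g)$) to reduce to the case where the point $z:=[p,q]\cap\ell_t$ lies in a fixed compact fundamental domain $D$ of $\ell_t$ for the cyclic group generated by the cuff element $c$ corresponding to $\ell_t$. Write $\ell_0:=f_t(\ell_t)\subset\mathcal{C}_0$. I would then record the facts used below: $f_t$ restricts on $\ell_t$ to the $(1-t)$-homothety onto $\ell_0$; $f$ restricts to an isometry of $\ell_0$ onto the translation axis $\mathcal{A}$ of $\rho(c)$ (since $\lambda_{j_0}(c)=\lambda_\rho(c)$ and $f$ is $1$-Lipschitz and equivariant); $d(p,q)\ge 2\delta$; the two pieces $\Sigma^{a},\Sigma^{b}$ of the decomposition abutting $\ell_t$ carry labels that are neither $(1,1)$ nor $(-1,-1)$; and on the two shrunk chambers adjacent to $\ell_t$ the map $f_t$ restricts to maps $f_t^{a},f_t^{b}$ of Lipschitz constant $\le C^*_t:=\max_i\Lip(f_t^{i})<1$, each displacing a point of the ball of radius $R$ about $z$ by $O(tR)$ (this follows from Lemma~\ref{lem:surfwithbound}(d) combined with \eqref{eqn:jt-j0-close-boundary}).

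\emph{Short pairs}, $d(p,q)\le S_0$ for a constant $S_0$ depending only on $\delta$. Then $p$ and $q$ lie in a fixed compact set, so $f_t$ displaces them by $O(t)$ and $d(F_t(p),F_t(q))=d(f(p),f(q))+O(t)$. Near the cuff, $f$ is either a \emph{fold} — $f=g_{a}$ on $\widetilde{\Sigma^{a}}$ and $f=g_{b}=g_{a}\circ R_{\ell_0}$ on $\widetilde{\Sigma^{b}}$ when the labels are $(1,-1)$, with $R_{\ell_0}$ the reflection in $\ell_0$ — so $d(f(p),f(q))=d(p,R_{\ell_0}(q))$, whose hyperbolic cosine equals $\cosh d(p,q)-2\sinh^2 d(p,\ell_0)$ up to $O(t)$; or, when a label $0$ occurs, an isometry on one side meeting on the other a map with $\Lip_\cdot(f)\le C_\delta<1$ at distance $\ge\delta/2$ from $\mathcal{C}_0$, which (the portion of $[p,q]$ on that side beyond distance $\delta/2$ from $\mathcal{C}_0$ having length $\ge\delta/2$) forces $d(f(p),f(q))\le d(p,q)-(1-C_\delta)\delta/2$. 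In both cases $d(f(p),f(q))\le C_1\, d(p,q)$ with $C_1=C_1(\delta,S_0)<1$ on the range $2\delta\le d(p,q)\le S_0$, hence $d(F_t(p),F_t(q))\le(C_1+O(t))\, d(p,q)$ for $t$ small.

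\emph{Intermediate pairs}, $S_0<d(p,q)\le c'\delta/t$. Since $d(p,z)+d(z,q)=d(p,q)\le c'\delta/t$, the maps $f_t^{a},f_t^{b}$ displace $p,q$ by only $O(c'\delta)$, so for $c'$ small $f_t(p),f_t(q)$ still lie at distance $\ge\delta/2$ from $\ell_0$ on the appropriate sides; running the fold (resp.\ the $C_\delta$-contraction) as above yields $\cosh d(F_t(p),F_t(q))\le\cosh d(p,q)-\kappa$ with a fixed $\kappa=\kappa(\delta)>0$, whence, inverting $\arccosh$, $d(F_t(p),F_t(q))\le d(p,q)-\kappa/\sinh d(p,q)\le\big(1-\epsilon_2(t)\big)\, d(p,q)$ with $\epsilon_2(t)=\kappa/\big((c'\delta/t)\sinh(c'\delta/t)\big)>0$. \emph{Long pairs}, $d(p,q)>c'\delta/t$: here $f_t$ compresses the feet of $p,q$ along $\ell_t$ by the factor $(1-t)$ and $f$ carries $\ell_0$ isometrically onto $\mathcal{A}$, so $F_t(p),F_t(q)$ lie within $O(\delta)$ of $\mathcal{A}$ with feet on $\mathcal{A}$ at distance $\le(1-t)s+O(\delta)$, $s$ being the distance between the feet of $p,q$ on $\ell_t$; thus $\cosh d(F_t(p),F_t(q))\le e^{O(\delta)}\cosh\big((1-t)s\big)$, giving $d(F_t(p),F_t(q))\le(1-t)\, d(p,q)+O(\delta)\le\big((1-t)+O(t)\big)\, d(p,q)$, which is $<d(p,q)$ once $c'$ exceeds the implied constant and $t$ is small. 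Taking $C$ to be the maximum of the three ratios finishes the proof for $t$ small.

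The main obstacle is precisely the uniformity linking the intermediate and long regimes. The fold (or the $\Lip<1$ behaviour of $f$) near the cuff only improves $\cosh d$ by a bounded \emph{additive} amount, so on its own it degenerates into a contraction ratio tending to $1$ as $d(p,q)\to\infty$ and cannot handle large $d(p,q)$; it must be handed over to the $(1-t)$-contraction that $f_t$ exerts along the cuff exactly at scale $d(p,q)\sim 1/t$, and one must check that the geometric constants — $\kappa$, the $O(\delta)$ defect in the long-pair estimate, and the displacement bound $O(tR)$ for $f_t$ at distance $R$ from $z$ — all cohere through the single free parameter $c'$. Quantifying that displacement bound, i.e.\ controlling the defect between the chamber maps $f_t^{i}$ and the $(1-t)$-homothety along $\mathcal{C}_t$, is the technical heart of the argument.
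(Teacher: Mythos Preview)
Your three-regime scheme has a genuine gap in the intermediate range $S_0<d(p,q)\le c'\delta/t$. The claim ``running the fold \dots\ yields $\cosh d(F_t(p),F_t(q))\le\cosh d(p,q)-\kappa$'' is not justified by the displacement bound you invoke. From ``$f_t$ displaces a point at distance $R$ from $z$ by $O(tR)$'' you only get $|d(f_t(p),f_t(q))-d(p,q)|=O(c'\delta)$, an \emph{additive} error in the distance. Since $\cosh$ is exponential, this becomes a \emph{multiplicative} error $e^{O(c'\delta)}$ in $\cosh d$, i.e.\ $\cosh d(f_t(p),f_t(q))$ may exceed $\cosh d(p,q)$ by as much as $O(c'\delta)\sinh d(p,q)$. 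For $d(p,q)$ near the top of your intermediate range this quantity is enormous and completely swamps the bounded gain $\kappa=2\sinh^2(\delta/2)$ that the fold provides. (Concretely: with $p,q$ at height $\delta$ over feet $s$ apart on~$\ell_t$, and $F_t(p),F_t(q)$ at heights $\alpha,\beta\approx\delta$ over feet $s''=s+O(c'\delta)$ apart on the same side of~$\mathcal{A}$, the difference $\cosh\alpha\cosh\beta\cosh s''-\cosh^2\!\delta\cosh s$ is of order $c'\delta\cdot\cosh s$, not bounded.) Your bound $1-\epsilon_2(t)$ with $\epsilon_2(t)=\kappa/\big((c'\delta/t)\sinh(c'\delta/t)\big)$ therefore does not follow from what you have written.

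The missing ingredient is exactly what you flag in your last paragraph, but the right quantification is stronger than $O(tR)$-displacement: one needs $d(f_t(p),f_t(x))\le(1-t)\,d(p,x)+L't$ for $x\in\ell_t$, with additive error $L't$ \emph{independent of $d(p,x)$}. The paper obtains this (Observation~\ref{obs:additive}) by comparing $f_t$ not with the identity but with an explicit map $h_t$ that agrees with $f_t$ on~$\ell_t$, sends perpendiculars to~$\ell_t$ to perpendiculars to~$\ell_0$, and has global Lipschitz constant $\le 1-t$; the $L't$ then comes from condition~(d) of Lemma~\ref{lem:surfwithbound} on a compact fundamental domain. With this estimate the triangle inequality through $x=[p,q]\cap\ell_t$ gives $d(f_t(p),f_t(q))\le(1-t)\,d(p,q)+2L't$, and only \emph{two} regimes are needed, split at the $t$-independent threshold $d(p,q)=3L'$: for short pairs the fold (or the local $\Lip<1$ of $f$ in the $0$-labeled case) buys an additive $\varepsilon$ in $d$ that beats $2L't$; for long pairs the $(1-t)$ factor alone already gives $d(F_t(p),F_t(q))\le(1-t/3)\,d(p,q)$. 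Your intermediate regime, and the $t$-dependent cutoff $c'\delta/t$, become unnecessary once the correct form of the estimate is in hand.
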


\noindent
Indeed, fix a small $t>0$.
Any geodesic segment $I=[p,q]$ of~$\HH^2$ projecting to a closed geodesic of $j_t(\Gamma_g)\backslash\HH^2\simeq\Sigma_g$ may be decomposed into subsegments $I_1,\dots,I_n$ contained in~$N_t^{\delta}$ alternating with subsegments $I'_1,\dots,I'_n$ crossing connected components of $\HH^2\smallsetminus N_t^{\delta}$ (indeed, any simple closed curve that enters one of these components crosses it, by choice of~$\delta$).
By construction, the map $F_t$ has Lipschitz constant $<1$ on each connected component of~$N_t^{\delta}$, hence moves the endpoints of each~$I_k$ closer together by a uniform factor (independent of~$I$).
Lemma~\ref{lem:hallali} ensures that the same holds for the~$I'_k$.
Thus the ratio $d(F_t(p),F_t(q))/d(p,q)$ is bounded by some factor $C'<1$ independent of~$I$, and the corresponding element $\gamma\in\Gamma_g$ satisfies $\lambda(\rho(\gamma))\leq\nolinebreak C'\lambda(j_t(\gamma))$.
This proves \eqref{eqn:lambda-rho-j_t}, hence completes the proof of the first statement of Theorem~\ref{thm:domin}.

\subsection{Proof of Lemma~\ref{lem:hallali}}\label{subsec:proof-lemma}

In this section we give a proof of Lemma~\ref{lem:hallali}.
We first make the following observation.

\begin{observation}\label{obs:additive}
There exists $L'\geq 0$ such that for any small enough $t>0$, any $p\in\partial N_t^{\delta}$ at distance $\delta$ from a geodesic $\ell_t\subset\mathcal{C}_t$, and any $x\in\ell_t$,
$$d(f_t(p),f_t(x)) \leq (1-t) \, d(p,x) + L' t .$$
\end{observation}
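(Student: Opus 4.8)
The point $x$ lies on the line $\ell_t\subset\mathcal{C}_t$, which $f_t$ maps to the corresponding line $\ell_0\subset\mathcal{C}_0$ by multiplying distances along it by the factor $(1-t)$; the point $p$ lies at distance $\delta$ from $\ell_t$ on $\partial N_t^{\delta}$, in the interior of one of the $\Sigma^i$-pieces, where $f_t=f_t^i$ is the $1$-Lipschitz (indeed strictly $1$-Lipschitz) equivariant map provided by Lemma~\ref{lem:surfwithbound}(b) and Theorem~\ref{thm:contcont}. The plan is to compare $f_t(p)$ and $f_t(x)$ to the corresponding quantities for the fixed representation $j_0$, using the two robustness inputs already at hand: property~(c) of Lemma~\ref{lem:surfwithbound} (so $j_t^i=j_0^i+O(t)$, hence $\mathcal{C}_t$ lies within Hausdorff distance $O(t)$ of $\mathcal{C}_0$ on compact sets, and $\ell_t$ is $O(t)$-close to $\ell_0$), and the extra uniformity we arranged for the $f_t$, namely $d(x',f_t(x'))\leq L_1 t$ for all $x'\in\mathcal{C}_t$ in a fixed compact set.

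First I would reduce to a statement near the single line $\ell_0$: let $\ell_t'$ be the $j_0$-geodesic parallel to $\ell_0$ at distance $\delta$ from it (on the appropriate side), so that $p$ is $O(t)$-close to $\ell_t'$. Let $\pi:\HH^2\to\ell_0$ be the nearest-point projection and write $x_p:=\pi(p)\in\ell_0$. By the triangle inequality it suffices to bound $d(f_t(p),f_t(x_p))$ and $d(f_t(x_p),f_t(x))$ separately. For the second term: both $x_p$ and $x$ lie on $\ell_t$ up to an $O(t)$ error (they are $O(t)$-close to points of $\ell_t$), and $f_t$ contracts distances along $\ell_t$ by exactly $(1-t)$, so $d(f_t(x_p),f_t(x))\leq (1-t)\,d(x_p,x)+O(t)$. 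For the first term: $f_t$ restricted to the $\Sigma^i$-piece is $1$-Lipschitz, and $d(p,x_p)=\delta+O(t)$, giving $d(f_t(p),f_t(x_p))\leq \delta+O(t)$. Combining, and using $d(p,x)\geq d(x_p,x)$ together with the fact that the hyperbolic quadrilateral with a right angle at $x_p$ forces $d(p,x)^2 \gtrsim d(x_p,x)^2 + (\text{const})$ when $d(x_p,x)$ is large — more precisely $d(p,x)\geq d(x_p,x)$ and $d(p,x)\geq \delta-O(t)$ — one gets
$$d(f_t(p),f_t(x)) \leq (1-t)\,d(x_p,x) + \delta + O(t) \leq (1-t)\,d(p,x) + \delta + O(t),$$
which is not quite the claimed bound because of the additive $\delta$. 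To absorb it, I would instead not split at $x_p$ but argue that the $O(t)$ transversal displacement of $p$ relative to $f_t(p)$ is what produces the $L't$ term, using the uniform estimate $d(p,f_t(p))\le L't$ available on compact sets by Lemma~\ref{lem:surfwithbound}(d) (for the $\Sigma^i$ labeled $0$; the labeled $\pm1$ pieces are isometric embeddings, so there the bound is cleaner, with $f_t$ literally $(1-t)$-scaling on $\ell_t$ and isometric off it up to an $O(t)$ shift): then $d(f_t(p),f_t(x)) \le d(f_t(p),p) + d(p,x) \cdot(\text{factor})+\dots$, and the point $p$ being at the \emph{fixed} distance $\delta$ from $\mathcal{C}_t$ means its $f_t$-image recedes from $f_t(\ell_t)$ at a controlled rate, so the genuine contraction factor along directions transverse to $\ell_t$, weighted by $\cosh\delta$, still yields a net bound of the form $(1-t)d(p,x)+L't$ once $d(p,x)$ exceeds a constant, while for $d(p,x)$ bounded the inequality is trivial after enlarging $L'$.

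\textbf{Main obstacle.} The delicate point is obtaining the \emph{linear} (in $t$) additive error $L't$ rather than merely a constant additive error, uniformly in $p$ and $x$ as $d(p,x)\to\infty$. This forces one to track that (i) the only source of discrepancy is the $O(t)$-perturbation of $\mathcal{C}_t$ versus $\mathcal{C}_0$ and the $O(t)$-shift of $f_t$ on $\mathcal{C}_t$, both genuinely $O(t)$ by Lemma~\ref{lem:surfwithbound}(c),(d) and the construction of $f_t$, and (ii) these perturbations enter the distance estimate \emph{additively} and not multiplicatively — which is exactly the content of the classical hyperbolic formula \eqref{eqn:trigo}–type computations: a fixed transversal offset $\delta$ contributes a bounded correction $\log\cosh\delta$ to distances, and an $O(t)$ change in the data changes that correction by $O(t)$. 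I would make this precise by a direct trigonometric estimate in the quadrilateral with vertices $p$, $\pi(p)$, $x$, $f_t$-images thereof, exactly as in the proof of Lemma~\ref{lem:surfwithbound}(d), and check that all error terms there are uniform for $p$ ranging over the (compact, modulo $j_t(\Gamma_g)$) set $\partial N_t^{\delta}$ and $x$ anywhere on $\ell_t$.
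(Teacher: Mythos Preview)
Your first attempt (splitting at the foot $x_p$) correctly self-diagnoses: bounding $d(f_t(p),f_t(x_p))$ by $\delta$ throws away the transverse contraction and leaves an additive constant, not $L't$. In your second attempt you assemble the right ingredients --- equivariance under the stabilizer $S$ of $\ell_0$ to reduce $p$ to a compact set, Lemma~\ref{lem:surfwithbound}(d) for $d(p,f_t(p))=O(t)$, and the built-in $d(x',f_t(x'))\leq L_1t$ on~$\ell_t$ --- and the bounded-$d(p,x)$ case does follow from these by the triangle inequality. But for large $d(p,x)$ there is a genuine gap: the inequality you write, $d(f_t(p),f_t(x)) \le d(f_t(p),p) + d(p,x)\cdot(\text{factor})+\dots$, has no mechanism to produce a factor $(1-t)$ on $d(p,x)$; the triangle inequality through $p$ gives factor~$1$, and your appeal to a ``genuine contraction factor along directions transverse to~$\ell_t$, weighted by $\cosh\delta$'' is a hope, not an argument. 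A direct trigonometric attack on the relevant quadrilateral is possible in principle but fiddly, since $f_t(p)$ is not known to sit on any particular hypercycle of~$\ell_0$.

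The paper closes this with one auxiliary map. Let $h_t:\HH^2\to\HH^2$ agree with $f_t$ on~$\ell_t$, send each geodesic orthogonal to~$\ell_t$ to the geodesic orthogonal to~$\ell_0$ through the image of its foot, and multiply distances by $1-t$ along those orthogonals. In Fermi coordinates about~$\ell_t$ its differential has principal values $1-t$ and $(1-t)\cosh((1-t)\eta)/\cosh\eta\leq 1-t$, so $h_t$ is globally $(1-t)$-Lipschitz. Since $h_t(x)=f_t(x)$ for $x\in\ell_t$,
\[
d(f_t(p),f_t(x)) \;\leq\; d(f_t(p),h_t(p)) + (1-t)\,d(p,x),
\]
and the whole observation reduces to $d(f_t(p),h_t(p))\leq L't$. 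Now the equivariance reduction works cleanly: both $f_t$ and $h_t$ are $(j_t,j_0)$-equivariant under~$S$, so restrict $p$ to a compact fundamental domain; there $d(p,h_t(p))=O(t)$ (from the $L_1t$ bound on~$\ell_t$) and $d(p,f_t(p))=O(t)$ by Lemma~\ref{lem:surfwithbound}(d), hence $d(h_t(p),f_t(p))=O(t)$. The map $h_t$ is precisely the device that converts your transverse-contraction intuition into a uniform $(1-t)$ factor valid for all $x\in\ell_t$ at once, with no case split on $d(p,x)$.
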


\begin{proof}[Proof of Observation~\ref{obs:additive}]
Since $f_t$ is $(j_t,j_0)$-equivariant and $\mathcal{C}_0$ has only fini\-tely many connected components modulo $j_0(\Gamma_g)$, we may fix a geodesic $\ell_0\subset\mathcal{C}_0$ and prove the observation only for the geodesics $\ell_t\subset\mathcal{C}_t$ corresponding to~$\ell_0$.
For any $t>0$, the map $f_t$ takes $\ell_t$ linearly to~$\ell_0$, multiplying all distances by the uniform factor $1-t$.
Let $h_t : \HH^2\rightarrow\HH^2$ be the orientation-preserving map that coincides with $f_t$ on~$\ell_t$, takes any line orthogonal to~$\ell_t$ to a line orthogonal to~$\ell_0$, and multiplies all distances by $1-t$ on such lines.
At distance $\eta$ from~$\ell_t$, the differential of $h_t$ has principal values $1-t$ and $(1-t)\cosh((1-t)\eta)/\cosh \eta \leq 1-t$ (see \cite[\!\!(A.9)]{gk13}), hence $\Lip(h_t)\leq 1-t$~and
$$d(f_t(x),h_t(p)) = d(h_t(x),h_t(p)) \leq (1-t) \, d(x,p)$$
for all $x\in\ell_t$ and $p\in\HH^2$.
By the triangle inequality, it is enough to find $L'\geq\nolinebreak 0$ such that $d(h_t(p),f_t(p))\leq L't$ for all small enough $t>0$ and all $p\in\nolinebreak\partial N_t^{\delta}$ at distance $\delta$ from~$\ell_t$.
Since $f_t$ and~$h_t$ are both $(j_t,j_0)$-equivariant under the stabilizer $S$ of $\ell_0$ in~$\Gamma_g$, and $j_t(S)$ acts cocompactly on the set $\overline{\mathcal{U}}_t$ of points at distance $\leq\delta$ from~$\ell_t$, we may restrict to $p$ in a compact fundamental domain of $\overline{\mathcal{U}}_t$ for $j_t(S)$.
Let $K'\subset\HH^2$ be a compact set containing such fundamental domains for all $t\in [0,t_0]$.
By construction of~$f_t$, there exists $L_1\geq 0$ such that $d(x',f_t(x'))\leq L_1t$ for all small enough $t>0$ and all $x'\in\ell_t\cap K'$.
By definition of~$h_t$, this implies the existence of $L_2\geq 0$ such that $d(p,h_t(p))\leq L_2t$ for all small enough $t>0$ and all $p\in K'$.
On the other hand, condition (d) of Lemma~\ref{lem:surfwithbound} (applied to the $\Gamma^i$ and $j_0^i$ as in Section~\ref{subsec:hallali1}) implies the existence of $L_3\geq 0$ such that $d(p,f_t(p))\leq L_3t$ for all $t$ and $p\in\partial N_t^{\delta}\cap K'$.
By the triangle inequality, we may take $L'=L_2+L_3$.
\end{proof}

\begin{proof}[Proof of Lemma~\ref{lem:hallali}]
As in the proof of Observation~\ref{obs:additive}, we may fix a geodesic $\ell_0\subset\mathcal{C}_0$ and restrict to the geodesics $\ell_t\subset\mathcal{C}_t$ corresponding to~$\ell_0$.
Fix a small $t>0$ and consider $p,q\in\partial N_t^{\delta}$ lying at distance $\delta$ from~$\ell_t$, on opposite sides of~$\ell_t$.
The segment $[p,q]$ can be subdivided, at its intersection point $x$ with~$\ell_t$, into two subsegments to which Observation~\ref{obs:additive} applies, yielding
\begin{equation}\label{eqn:splice}
\left \{ \begin{array}{rcl}
d(f_t(p), f_t(x)) & \leq & (1-t) \, d(p,x) + L't ,\\
d(f_t(x), f_t(q)) & \leq & (1-t) \, d(x,q) + L't .
\end{array} \right .
\end{equation}
Up to switching $p$ and~$q$, we may assume that either $[p,x]$ projects to a pair of pants labeled~$0$ in $j_t(\Gamma_g)\backslash\HH^2\simeq\Sigma_g$, or $[p,x]$ projects to a pair of pants labeled~$1$ and $[x,q]$ to a pair of pants labeled $-1$.

Suppose that $[p,x]$ projects to a pair of pants labeled~$0$ in $j_t(\Gamma_g)\backslash\HH^2\simeq\Sigma_g$.
We first observe that if $t$ is small enough (independently of~$p$), then
\begin{equation}\label{eqn:f_t(p)-far}
d(f_t(p),\ell_0)\geq \frac{3\delta}{4} .
\end{equation}
Indeed, as in the proof of Observation~\ref{obs:additive}, the inequality is true for $p\in\partial N_t^{\delta}$ in a fixed compact set~$K'$ independent of~$t$, by condition (d) of Lemma~\ref{lem:surfwithbound} and \eqref{eqn:jt-j0-close-boundary}, and we then use the fact that $f_t$ is $(j_t,j_0)$-equivariant under the stabilizer $S$ of $\ell_0$ in~$\Gamma_g$, which acts cocompactly (by~$j_t$) on the set of points at distance $\delta$ from~$\ell_t$.
By \eqref{eqn:f_t(p)-far}, if $t$ is small enough (independently~of~$p$), then the segment $[f_t(p),f_t(x)]$ spends at least $\delta/4$ units of length in the complement $N_0^{\delta/2}$ of the $\delta/2$-neighborhood of~$\mathcal{C}_0$.
The point is that $\Lip_y(f)<\nolinebreak 1$ for all $y\in\HH^2\smallsetminus\mathcal{C}_0$ projecting to a pair of pants labeled~$0$ in $j_0(\Gamma_g)\backslash\HH^2\simeq\Sigma_g$, and this bound is uniform in restriction to~$N_0^{\delta/2}$ since the function $p\mapsto\Lip_p(f)$ is upper semicontinuous and $j_0(\Gamma_g)$-invariant.
Remark~\ref{rem:local-Lip} thus implies the existence of a constant $\varepsilon>0$, independent of $t,\ell_t,p,x$, such that
\begin{equation}\label{eqn:rabiot}
d\big(f\circ f_t(p), f\circ f_t(x)\big) \leq d(f_t(p), f_t(x)) - \varepsilon .
\end{equation}
Using the triangle inequality and the fact that $f$ is $1$-Lipschitz, together with \eqref{eqn:splice} and \eqref{eqn:rabiot}, we find
\begin{eqnarray*}
d(F_t(p), F_t(q)) & \leq & d\big(f\circ f_t(p), f\circ f_t(x)\big) + d\big(f\circ f_t(x), f\circ f_t(q)\big) \\
& \leq & (1-t)\,d(p,x) + L't -\varepsilon + (1-t)\,d(x,q) + L' t ,
\end{eqnarray*} 
which is bounded by $(1-t)\,d(p,q)$ as soon as $t\leq\varepsilon/(2L')$.

Suppose that $[p,x]$ projects to a pair of pants labeled~$1$ and $[x,q]$ to a pair of pants labeled $-1$.
We then use the fact that the continuous map $f$ folds along $\ell_0=f_t(\ell_t)$: in restriction to the connected component of $\HH^2\smallsetminus\mathcal{C}_0$ containing $f_t(p)$ (\resp $f_t(q)$), it is an isometry preserving (\resp reversing) the orientation.
In particular, $d(F_t( p),F_p(q))<d(f_t( p),f_t(q))$.
Moreover,~this inequality can be made uniform in the following sense: there exists $\varepsilon>0$ such that
$$d(F_t(p),F_t(q)) \leq d(f_t(p),f_t(q)) - \varepsilon$$
whenever $f_t(p)$ and~$f_t(q)$ lie at distance $\geq 3\delta/4$ from~$\ell_0$ (which is the~case for $t$ small enough by \eqref{eqn:f_t(p)-far}) and at distance $\leq 3L'$ from each other.
By \eqref{eqn:splice},
\begin{equation}\label{eqn:pivot}
d(f_t(p),f_t(q)) \leq (1-t) \, d(p,q) + 2 L' t ,
\end{equation} 
which implies
$$d(F_t(p),F_t(q)) \leq (1-t)\,d(p,q)$$
for $d(p,q)\leq 3L'$ as soon as $t\leq\varepsilon/(2L')$ is small enough.
If $d(p,q)\geq 3L'$, then applying the $1$-Lipschitz map $f$ to \eqref{eqn:pivot} directly gives
$$d(F_t(p),F_t(q)) \leq (1-t) \, d(p,q) + 2L' t \leq \Big(1-\frac{t}{3}\Big) \, d(p,q) .\qedhere$$
\end{proof}

\subsection{Folding a given surface} \label{subsec:hallali2}

We now prove the second statement of Theorem~\ref{thm:domin}: namely, given $[j_0]\in\Repfd$ and an integer $k\in (-2g+2,2g-2)$, we construct $[\rho]\in\Repnfd$ with $\mathrm{eu}(\rho)=k$ that is strictly dominated by~$[j_0]$.

It is easy to find $[\rho]$ with $\mathrm{eu}(\rho)=k$ such that $\lambda_{\rho}(\gamma)\leq\lambda_{j_0}(\gamma)$ for all $\gamma\in\Gamma_g$: just decompose $\Sigma_g$ into pairs of pants and attribute arbitrary values $0,1,-1$ to each so that the sum is~$k$.
Consider a lamination $\Upsilon$ of~$\Sigma_g$ consisting of all the cuffs together with a triskelion lamination inside each pair of pants labeled~$0$, and let $c : \Sigma_g\smallsetminus\Upsilon\rightarrow\{ -1,1\}$ be a coloring taking the value $1$ (\resp $-1$) on each pair of pants labeled $1$ (\resp $-1$), and both values on each pair of pants labeled~$0$.
Folding along~$\Upsilon$ with the coloring~$c$ gives an element $[\rho]\in\Repnfd$ with $\lambda_{\rho}(\gamma)\leq\lambda_{j_0}(\gamma)$ for all $\gamma\in\Gamma_g$.
However, we need a \emph{strict} domination: the idea is to obtain $\rho$ by folding, not $j_0$, but a small deformation of $j_0$.
For this purpose, we use the following result, which is analogous to Lemma~\ref{lem:surfwithbound}.

\begin{lemma}\label{lem:surfwithbound2}
Let $\Gamma$ be the fundamental group and $j_0\in\Hom(\Gamma,\PSL)$ the holonomy of a compact, connected hyperbolic surface $\Sigma$ with nonempty geodesic boundary.
Then there exist $t_0>0$ and a continuous family of representations $(j_t)_{0\leq t \leq t_0}$ with the following properties:
\begin{itemize}
  \item[$(a)$] $\lambda_{j_t}(\gamma)=(1-t)\,\lambda_{j_0}(\gamma)$ for any $t\in [0,t_0]$ and any $\gamma\in\Gamma$ corresponding to a boundary component of~$\Sigma$;
  \item[$(b)$] $\sup_{\gamma\in\Gamma\smallsetminus \{1\}}\, \frac{\lambda_{j_t}(\gamma)}{\lambda_{j_0}(\gamma)}<1$ for any $t\in (0,t_0]$;
  \item[$(c)$] $j_t(\gamma)=j_0(\gamma)+O(t)$ for any $\gamma\in\Gamma$ as $t\rightarrow 0$, where both sides are seen as $2\times 2$ real matrices with determinant~$1$;
  \item[$(d)$] for any compact subset $K$ of~$\HH^2$ projecting to the interior of the convex core of $j_0(\Gamma)\backslash\HH^2$, there exists $L>0$ such that
  $$d(p,f_t(p))\leq Lt$$
  for any $p\in K$, any $t\in[0,t_0]$, and any $1$-Lipschitz, $(j_0,j_t)$-equivariant map $f_t : \HH^2\rightarrow\HH^2$.
\end{itemize}
\end{lemma}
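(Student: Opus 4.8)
The plan is to establish Lemma~\ref{lem:surfwithbound2} by mimicking the proof of Lemma~\ref{lem:surfwithbound}, replacing the (positive) strip deformations used there, which insert hyperbolic strips and \emph{lengthen} boundary components, by \emph{negative} strip deformations, which remove thin hyperbolic strips and \emph{shorten} them; equivalently, one runs the construction of Lemma~\ref{lem:surfwithbound} with a small negative width. Concretely, to shorten one boundary component $\beta$ of $\Sigma$ I would fix, exactly as in that proof, a finite family of disjoint biinfinite geodesic arcs $\alpha_1,\dots,\alpha_n\subset j_0(\Gamma)\backslash\HH^2$, each crossing $\beta$ orthogonally twice and cutting the convex core into right-angled hexagons and one-holed right-angled bigons. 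Along each $\alpha_i$ one removes a strip of width $O(t)$ with narrowest cross section at the midpoint of $\alpha_i\cap\Sigma$ and reglues; since $\alpha_i$ is embedded it has an embedded tubular neighbourhood of definite width, so for $t$ small this yields a well-defined complete hyperbolic surface together with a natural $1$-Lipschitz comparison map from $j_0(\Gamma)\backslash\HH^2$ onto it, equal to the identity away from $\alpha_i$. Tuning the widths so that $\beta$ is scaled by $(1-t)$ while the other boundary geodesics are untouched, then iterating over $\beta_1,\dots,\beta_r$, produces a continuous family $(j_t)_{0\le t\le t_0}$ satisfying~$(a)$, together with an associated $1$-Lipschitz, $(j_0,j_t)$-equivariant map $c_t:\HH^2\rightarrow\HH^2$ obtained by composing the individual comparison maps.

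Properties $(b)$ and $(c)$ then follow as in Lemma~\ref{lem:surfwithbound}. For $(b)$, if $\Lip(c_t)$ were equal to $1$ then Lemma~\ref{lem:stretchlocus} would supply a nonempty geodesic lamination of $j_0(\Gamma)\backslash\HH^2$ isometrically preserved by $c_t$; but each individual negative strip deformation isometrically preserves no geodesic lamination except the boundary components it leaves fixed, and since every boundary component is shortened at some stage there is no common invariant lamination, so $\Lip(c_t)<1$ and $(b)$ follows from Remark~\ref{rem:Clambda-CLip}. For $(c)$, the strip widths are $O(t)$, so a fixed closed geodesic crosses the modified region a bounded number of times and its length changes by $O(t)$; together with the bound $\lambda_{j_t}(\gamma)\le\lambda_{j_0}(\gamma)$ given by $c_t$, this shows $\lambda_{j_t}(\gamma)=\lambda_{j_0}(\gamma)+O(t)$ for finitely many elements $\gamma$ whose length functions form a smooth coordinate system near $[j_0]$, and conjugating each $j_t$ by a suitable element of $\PSL$ depending continuously on $t$ gives $j_t(\gamma)=j_0(\gamma)+O(t)$ for all $\gamma$.

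For $(d)$ I would transcribe the argument of Lemma~\ref{lem:surfwithbound}$(d)$ with the direction of equivariance reversed. Fix a compact $K$ in the interior of the preimage $N_0$ of the convex core and $p\in K$, and choose $\gamma_1,\gamma_2,\gamma_3\in\Gamma$ corresponding to boundary components whose $j_0$-axes $\mathcal{A}_{j_0(\gamma_i)}\subset\partial N_0$ bound a geodesic triangle containing $p$. For any $1$-Lipschitz, $(j_0,j_t)$-equivariant map $f_t:\HH^2\rightarrow\HH^2$ one has $d(f_t(p),j_t(\gamma_i)\cdot f_t(p))\le d(p,j_0(\gamma_i)\cdot p)$; a short preliminary argument (the same as for $1$-Lipschitz equivariant maps, using that $j_t=j_0+O(t)$) shows that $f_t(p)$ stays in a fixed compact set for $t$ small, so the $O(t)$-estimates from $(a)$ and $(c)$ apply there. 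By \eqref{eqn:trigo} the inequality reads
$$\sinh\!\Big(\tfrac{\lambda_{j_t}(\gamma_i)}{2}\Big)\cosh d\big(f_t(p),\mathcal{A}_{j_t(\gamma_i)}\big) \le \sinh\!\Big(\tfrac{\lambda_{j_0}(\gamma_i)}{2}\Big)\cosh d\big(p,\mathcal{A}_{j_0(\gamma_i)}\big).$$
Now $\lambda_{j_t}(\gamma_i)=\lambda_{j_0}(\gamma_i)+O(t)$ by~$(a)$, and the axis $\mathcal{A}_{j_t(\gamma_i)}$ lies within $O(t)$ of $\mathcal{A}_{j_0(\gamma_i)}$ on that region by~$(c)$, so the inequality gives $\cosh d(f_t(p),\mathcal{A}_{j_0(\gamma_i)})\le\cosh d(p,\mathcal{A}_{j_0(\gamma_i)})+O(t)$ and hence, since $d(p,\mathcal{A}_{j_0(\gamma_i)})>0$, $d(f_t(p),\mathcal{A}_{j_0(\gamma_i)})\le d(p,\mathcal{A}_{j_0(\gamma_i)})+O(t)$. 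Applied for $i=1,2,3$, this confines $f_t(p)$ to a curvilinear triangle around $p$ bounded by three hypercycles expanding at rate $O(t)$, so $d(p,f_t(p))=O(t)$ uniformly for $p\in K$, with the implied constant independent of $f_t$; this is~$(d)$.

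The one genuinely new ingredient, and where I expect the main work to lie, is the negative strip deformation itself: one must verify that removing a waisted strip of width $O(t)$ along each $\alpha_i$ is well defined for $t$ small, that the associated comparison map $c_t$ is genuinely $1$-Lipschitz (using convexity of the right-angled pieces, exactly as for the collapsing maps in Lemma~\ref{lem:surfwithbound}), and that it isometrically preserves no geodesic lamination beyond the fixed boundary components. Granting this, all of $(a)$--$(d)$ are obtained by transcribing the arguments above.
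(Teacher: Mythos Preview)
Your proposal is correct and follows essentially the same approach as the paper: the authors also construct $(j_t)$ by negative strip deformations (deleting, for each arc~$\alpha_i$, the strip between $\alpha_i$ and a nearby geodesic arc~$\alpha'_i$, then regluing isometrically) to obtain a $1$-Lipschitz collapsing map $\varsigma_t$ from $j_0(\Gamma)\backslash\HH^2$ to the new surface, and then derive $(a)$--$(d)$ exactly as in Lemma~\ref{lem:surfwithbound} after switching $j_t$ and~$j_0$. Your extra care in~$(d)$, bounding $f_t(p)$ a priori before invoking the $O(t)$ axis estimates, is a point the paper leaves implicit in the phrase ``switching $j_t$ and~$j_0$''.
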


As in the proof of Lemma~\ref{lem:surfwithbound}, we construct the representations $j_t$ as holonomies of hyperbolic surfaces obtained from $j_0(\Gamma)\backslash\HH^2$ by deformation.
Now the deformation needs to be shortening instead of lengthening: we use \emph{negative} strip deformations.

\begin{proof}[Proof of Lemma~\ref{lem:surfwithbound2}]
We see $\Sigma$ as the convex core of $j_0(\Gamma)\backslash\HH^2$.
To shorten one boundary component $\beta$ of~$\Sigma$, choose a finite collection of disjoint, biinfinite geodesic arcs $\alpha_1,\dots, \alpha_n \subset j_0(\Gamma)\backslash\HH^2$, each crossing $\beta$ orthogonally twice, and subdividing $\Sigma$ into right-angled hexagons and one-holed right-angled bigons.
Near each $\alpha_i$, choose a second geodesic arc $\alpha'_i$, also crossing $\beta$ twice, such that $\alpha_i, \alpha'_i$ approach each other closest at some points $p_i, p'_i \in \Sigma$.
We take all arcs to be pairwise disjoint.
For every~$i$, delete the hyperbolic strip $A_i$ bounded by $\alpha_i$ and~$\alpha'_i$ and glue the arcs back together isometrically, identifying $p_i$ with~$p'_i$.
This yields a new complete hyperbolic surface, with a compact convex core, equipped with a natural $1$-Lipschitz map $\varsigma_t^{\beta}$ \emph{from} $j_0(\Gamma)\backslash\HH^2$, obtained by collapsing the strips $A_i$ to lines.
The set $\varsigma_t^{\beta}(\Sigma)$ is strictly contained in the new convex core.
The geodesic corresponding to~$\beta$ is shorter in the new surface than in~$\Sigma$; by adjusting the widths of the strips~$A_i$, we may assume that the ratio of lengths is $\frac{1}{1-t}$.
Note that the appropriate widths for this ratio are in $O(t)$ as $t\rightarrow 0$.
All lengths of geodesics corresponding to boundary components other than~$\beta$ are unchanged.

Repeat the construction, iteratively, for all boundary components $\beta_1,\dots, \beta_r$ of~$\Sigma$, in some arbitrary order.
We thus obtain a new complete hyperbolic surface $j_t(\Gamma)\backslash\HH^2$, with a compact convex core~$\Sigma_t$, such that $j_t$ satisfies~(a).
As in the proof of Lemma~\ref{lem:surfwithbound}, up to replacing each $j_t$ with a conjugate under $\PSL$, we may assume that (c) is satisfied.
To see that (b) and~(d) also hold, we use the $1$-Lipschitz map $\varsigma_t:=\varsigma^{\beta_r}_t\circ\dots\circ\varsigma^{\beta_1}_t$ from $\Sigma$ to~$\Sigma_t$ and argue as in the proof of Lemma~\ref{lem:surfwithbound}, switching $j_t$ and~$j_0$.
\end{proof}

As in Section~\ref{subsec:hallali1}, we write $\Sigma_g=\Sigma^{1}\cup\dots\cup\Sigma^{m}$, where $\Sigma^{i}$, for any $1\leq i\leq m$, is a compact surface with boundary that is
\begin{itemize}
  \item either a pair of pants labeled~$0$,
  \item or a full connected component of the subsurface of $\Sigma_g$ made of pants labeled $1$,
  \item or a full connected component of the subsurface of $\Sigma_g$ made of pants labeled $-1$.
\end{itemize}
Choose a small $\delta>0$ such that in all hyperbolic metrics on~$\Sigma_g$ which are close enough to that defined by~$j_0$, any simple geodesic entering the $\delta$-neighborhood of the geodesic representative of a cuff of our chosen pants decomposition crosses the cuff.
We use again the notation $\mathcal{C}_0,N_0^{\delta},K$ from Section~\ref{subsec:hallali1}.
Applying Lemma~\ref{lem:surfwithbound2} to $\Gamma^{i}:=\pi_1(\Sigma^{i})$ and $j_0^{i}:=j_0|_{\Gamma^{i}}$, we obtain continuous families of representations $(j_t^{i})_{0\leq t \leq t_0}$ for $1\leq i\leq m$ satisfying (a),(b),(c),(d), with a uniform constant $L>0$ for the compact set $K\subset\HH^2\smallsetminus\mathcal{C}_0$.
For any $t\geq 0$, using~(a), we can glue together the convex cores of the $j_t^{i}(\Gamma^{i})\backslash\HH^2$ following the same combinatorics as the~$\Sigma^{i}$: this gives a closed hyperbolic surface of genus~$g$, hence a holonomy representation $j_t\in\Hom(\Gamma_g,\PSL)$.
By (c), up to adjusting the twist parameters, we may assume that $j_t(\gamma)=j_0(\gamma)+O(t)$ for any $\gamma\in\Gamma_g$ as $t\rightarrow 0$.
Recall the notation $\mathcal{C}_t,N_t^{\delta}$ from Section~\ref{subsec:hallali1}.
By Proposition~\ref{prop:unif-prop-signs}, there exist a family $(\rho_t)_{0\leq t\leq t_0}\subset\Hom(\Gamma_g,\PSL)$ of non-Fuchsian representations and, for any $t\in [0,t_0]$, a $1$-Lip\-schitz, $(j_t,\rho_t)$-equivariant map $\varphi_t : \HH^2\rightarrow\HH^2$ that is an orientation-preserving (\resp orientation-reversing) isometry in restriction to any connected subset of~$\HH^2$ projecting to a union of pants labeled $1$ (\resp $-1$) in $j_t(\Gamma_g)\backslash\HH^2\simeq\Sigma_g$, such that
\begin{equation}\label{eqn:unif-bound-phit}
\Lip_p(\varphi_t) \leq C^{\ast} < 1
\end{equation}
for all $t\in [0,t_0]$ and all $p\in N_t^{\delta}$ projecting to a pair of pants labeled~$0$ in $j_t(\Gamma_g)\backslash\HH^2\simeq\Sigma_g$, for some $C^{\ast}<1$ independent of $p$ and~$t$.
We claim that for $t>0$ small enough,
\begin{equation}\label{eqn:lambda-rho_t-j}
\sup_{\gamma\in (\Gamma_g)_s}\ \frac{\lambda_{\rho_t}(\gamma)}{\lambda_{j_0}(\gamma)} < 1 ,
\end{equation}
which by Theorem~\ref{thm:contcont} is enough to prove that $[\rho_t]$ is strictly dominated by~$[j_0]$.
Indeed, by (b) and Theorem~\ref{thm:contcont}, for any $1\leq i\leq m$ and $t\in (0,t_0]$, there exists a $(j_t|_{\Gamma^{i}},j_0|_{\Gamma^{i}})$-equivariant map $f_t^{i} : \HH^2\rightarrow\HH^2$ with $\Lip(f_t^{i})<1$.
Let $f_t : (N_0^{\delta}\cup\mathcal{C}_0)\rightarrow\HH^2$ be a $(j_0,j_t)$-equivariant map such that
\begin{itemize} 
  \item $f_t=f_t^{i}$ on the component of $K$ projecting to~$\Sigma^{i}$, for all $1\leq i\leq m$;
  \item $f_t$ takes any geodesic line in~$\mathcal{C}_0$ to the corresponding line in~$\mathcal{C}_t$, multiplying all distances by the uniform factor $(1-t)$, and $d(x,f_t(x))\leq L_1t$ for all $x\in\mathcal{C}_0\cap K$, for some $L_1\geq 0$ independent of $x$ and~$t$.
\end{itemize}
Consider the $(j_0, \rho_t)$-equivariant map
$$G_t:=\varphi_t\circ f_t :\ (N_0^{\delta} \cup \mathcal{C}_0) \longrightarrow \HH^2 .$$
Any geodesic segment $I=[p,q]$ of~$\HH^2$ projecting to a closed geodesic of $j_0(\Gamma_g)\backslash\HH^2\simeq\Sigma_g$ may be decomposed into subsegments $I_1,\dots,I_n$ contained in~$N_0^{\delta}$ alternating with subsegments $I'_1,\dots,I'_n$ crossing connected components of $\HH^2\smallsetminus N_0^{\delta}$.
By contractivity of~$f_t$, the map $G_t$ has Lipschitz constant $<1$ on each connected component of~$N_0^{\delta}$, hence moves the endpoints of each~$I_k$ closer together by a uniform factor (independent of~$I$).
The subsegments $I'_k$ are treated by the following lemma, which implies \eqref{eqn:lambda-rho_t-j} and therefore completes the proof of the second statement of Theorem~\ref{thm:domin}.

\begin{lemma}[Analogue of Lemma~\ref{lem:hallali}]\label{lem:hallali2}
For small enough $t>0$, there exists $C<1$ such that for all $p,q\in\partial N_0^{\delta}$ lying at distance $\delta$ from a line $\ell_0\subset\mathcal{C}_0$, on opposite sides of~$\ell_0$,
$$d(G_t(p),G_t(q)) \leq C\, d(p,q).$$
\end{lemma}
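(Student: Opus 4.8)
The plan is to repeat the proof of Lemma~\ref{lem:hallali} almost verbatim, interchanging the roles of the ``$j_0$'' and ``$j_t$'' hyperbolic structures. Thus the fixed folding map $f$ (which folds along $\mathcal{C}_0$ and has $\Lip_p(f)<1$ off $\mathcal{C}_0$ on the pants labeled $0$) is replaced by the family $\varphi_t$ of Proposition~\ref{prop:unif-prop-signs} (which folds along $\mathcal{C}_t$ and has $\Lip_p(\varphi_t)\leq C^{\ast}<1$ off $\mathcal{C}_t$ on the pants labeled $0$, \emph{uniformly in} $t$, by \eqref{eqn:unif-bound-phit}); the map $f_t$ is now $(j_0,j_t)$-equivariant and carries the lines of $\mathcal{C}_0$ linearly onto the corresponding lines of $\mathcal{C}_t$, scaling by the factor $1-t$; and Lemma~\ref{lem:surfwithbound} is replaced by Lemma~\ref{lem:surfwithbound2}. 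Recall $G_t=\varphi_t\circ f_t$.

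The first step is the exact analogue of Observation~\ref{obs:additive}: there exists $L'\geq 0$ such that, for all small $t>0$, any $p\in\partial N_0^{\delta}$ at distance $\delta$ from a line $\ell_0\subset\mathcal{C}_0$, and any $x\in\ell_0$,
$$d(f_t(p),f_t(x)) \leq (1-t)\,d(p,x) + L't .$$
The proof carries over word for word: fixing $\ell_0$ and setting $\ell_t:=f_t(\ell_0)$, one introduces the orientation-preserving map $h_t$ that coincides with $f_t$ on $\ell_0$, carries each perpendicular to $\ell_0$ to a perpendicular to $\ell_t$, and scales distances on it by $1-t$; then $\Lip(h_t)\leq 1-t$, so $d(f_t(x),h_t(p))=d(h_t(x),h_t(p))\leq(1-t)\,d(x,p)$, and by the triangle inequality it is enough to bound $d(h_t(p),f_t(p))$. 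After restricting $p$ to a fixed compact fundamental domain for the stabilizer $S$ of $\ell_0$ (which acts cocompactly, via $j_0$, on the points at distance $\leq\delta$ from $\ell_0$), the property $d(x,f_t(x))\leq L_1 t$ on $\mathcal{C}_0\cap K$ forces $d(p,h_t(p))\leq L_2 t$, while condition (d) of Lemma~\ref{lem:surfwithbound2} (applied to the $\Gamma^{i}$ and $j_0^{i}$) gives $d(p,f_t(p))\leq L_3 t$; one takes $L'=L_2+L_3$, and the estimate is $S$-equivariant. One also records the analogue of \eqref{eqn:f_t(p)-far}: for $t$ small (independently of $p$), $d(f_t(p),\mathcal{C}_t)\geq 3\delta/4$ for every $p\in\partial N_0^{\delta}$, which on a fixed compact set follows from condition (d) of Lemma~\ref{lem:surfwithbound2} and from $j_t(\gamma)=j_0(\gamma)+O(t)$ (keeping $\mathcal{C}_t$ within $O(t)$ of $\mathcal{C}_0$), then extends to all $p$ by $S$-equivariance.

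Fix now a small $t>0$ and $p,q\in\partial N_0^{\delta}$ at distance $\delta$ from $\ell_0$ on opposite sides, and set $x=[p,q]\cap\ell_0$, so $f_t(x)\in\ell_t$. Applying the estimate above to $[p,x]$ and $[x,q]$, and splitting into cases as in Lemma~\ref{lem:hallali}: since $\mathcal{C}_0$ contains no line separating two pants both labeled $1$ or both labeled $-1$, up to swapping $p$ and $q$ either $[p,x]$ projects to a pair of pants labeled $0$, or $[p,x]$ projects to one labeled $1$ and $[x,q]$ to one labeled $-1$. In the first case, $d(f_t(p),\mathcal{C}_t)\geq 3\delta/4$ forces $[f_t(p),f_t(x)]$ to spend at least $\delta/4$ units of length inside the pair of pants labeled $0$, at distance $\geq\delta/2$ from its boundary, where $\Lip_y(\varphi_t)\leq C^{\ast\ast}<1$ \emph{uniformly in} $t$ by Proposition~\ref{prop:unif-prop-signs} applied with $\eta=\delta/2$; Remark~\ref{rem:local-Lip} then yields $d(G_t(p),G_t(x))\leq d(f_t(p),f_t(x))-\varepsilon$ with $\varepsilon=(1-C^{\ast\ast})\delta/4>0$ independent of $t$, and combining this with the two inequalities of the previous step and with the fact that $\varphi_t$ is $1$-Lipschitz gives $d(G_t(p),G_t(q))\leq(1-t)\,d(p,q)+2L't-\varepsilon\leq(1-t)\,d(p,q)$ once $t\leq\varepsilon/(2L')$. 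In the second case, the coloring of $\varphi_t$ takes opposite values on the two sides of $\ell_t$, so by \eqref{eqn:cocycle-coloring} the map $\varphi_t$ folds along $\ell_t$ with bending angle $\pi$: it is an orientation-preserving, resp.\ orientation-reversing, isometry on the component of $\HH^2\smallsetminus\mathcal{C}_t$ containing $f_t(p)$, resp.\ $f_t(q)$, and continuity forces these two isometries to differ by the reflection $r$ across $\ell_t$, whence $d(G_t(p),G_t(q))=d(f_t(p),r(f_t(q)))$. By the same compactness argument as in the corresponding case of Lemma~\ref{lem:hallali}, there is $\varepsilon>0$ with $d(f_t(p),r(f_t(q)))\leq d(f_t(p),f_t(q))-\varepsilon$ whenever $f_t(p),f_t(q)$ lie at distance $\geq 3\delta/4$ from $\ell_t$ and $\leq 3L'$ from one another; since $d(f_t(p),f_t(q))\leq(1-t)\,d(p,q)+2L't$, this settles the range $d(p,q)\leq 3L'$ for $t\leq\varepsilon/(2L')$, and for $d(p,q)\geq 3L'$ the $1$-Lipschitz bound for $\varphi_t$ gives directly $d(G_t(p),G_t(q))\leq(1-t)\,d(p,q)+2L't\leq(1-t/3)\,d(p,q)$. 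Taking $C:=1-t/3<1$ proves the lemma.

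The one genuinely new point relative to Lemma~\ref{lem:hallali} — and the step to be handled with care — is that $\varphi_t$ is a \emph{family} of maps rather than a single fixed one, so the strict contraction on the interior of the pants labeled $0$ has to be uniform in $t$; this is exactly what Proposition~\ref{prop:unif-prop-signs} (hence Lemma~\ref{lem:pantsfold-uniform}) provides, taking the place of the upper-semicontinuity-plus-compactness argument used in Lemma~\ref{lem:hallali}. Everything else is routine bookkeeping of equivariances and of the direction in which distances get rescaled along $\mathcal{C}_0\leftrightarrow\mathcal{C}_t$.
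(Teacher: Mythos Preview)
Your proof is correct and follows exactly the approach the paper itself indicates: redo the proof of Lemma~\ref{lem:hallali} with $j_0$ and $j_t$ interchanged, replacing the fixed map $f$ by the family $\varphi_t$, and use the uniform bound \eqref{eqn:unif-bound-phit} from Proposition~\ref{prop:unif-prop-signs} to get the analogue of \eqref{eqn:rabiot} in the case of a pair of pants labeled~$0$. Your final paragraph correctly isolates the only nontrivial new point---the need for uniformity in~$t$---and points to the right ingredient (Proposition~\ref{prop:unif-prop-signs}/Lemma~\ref{lem:pantsfold-uniform}).
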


The proof of Lemma~\ref{lem:hallali2} uses the following observation, which is identical to Observation~\ref{obs:additive} after exchanging $j_0$ and~$j_t$.

\begin{observation}\label{obs:additive2}
There exists $L'\geq 0$ such that for any small enough $t\geq 0$, any $p\in\partial N_0^{\delta}$ at distance $\delta$ from a geodesic $\ell_0\subset\mathcal{C}_0$, and any $x\in\ell_0$,
\begin{equation}\label{eqn:additive2}
d(f_t(p),f_t(x)) \leq (1-t) \, d(p,x) + L' t .
\end{equation}
\end{observation}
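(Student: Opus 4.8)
Since the statement is asserted to be the same as Observation~\ref{obs:additive} with $j_0$ and~$j_t$ exchanged, the plan is to transcribe that proof; the argument is in fact marginally cleaner here, since the geodesics $\ell_0\subset\mathcal{C}_0$ on the source side are measured in the \emph{fixed} metric $j_0$, so no compact set needs to vary with~$t$. As $f_t$ is $(j_0,j_t)$-equivariant and $\mathcal{C}_0$ has only finitely many connected components modulo $j_0(\Gamma_g)$, I would first fix a single geodesic $\ell_0\subset\mathcal{C}_0$ and establish \eqref{eqn:additive2} for $\ell_0$ and all its $j_0(\Gamma_g)$-translates only. By construction $f_t$ takes $\ell_0$ linearly onto the corresponding geodesic $\ell_t\subset\mathcal{C}_t$, multiplying all distances by the uniform factor $1-t$.

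The second step introduces the comparison map $h_t:\HH^2\to\HH^2$ that agrees with $f_t$ on~$\ell_0$, sends each line orthogonal to~$\ell_0$ to a line orthogonal to~$\ell_t$, and multiplies lengths by $1-t$ along such lines. As in \cite[(A.9)]{gk13}, at distance $\eta$ from~$\ell_0$ the differential of~$h_t$ has principal values $1-t$ and $(1-t)\cosh((1-t)\eta)/\cosh\eta\leq 1-t$, whence $\Lip(h_t)\leq 1-t$ and therefore $d(f_t(x),h_t(p))=d(h_t(x),h_t(p))\leq(1-t)\,d(x,p)$ for every $x\in\ell_0$ and $p\in\HH^2$. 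By the triangle inequality, \eqref{eqn:additive2} then reduces to a bound $d(h_t(p),f_t(p))\leq O(t)$, uniform over $p\in\partial N_0^{\delta}$ at distance $\delta$ from~$\ell_0$.

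For that remaining bound I would combine equivariance with condition~(d) of Lemma~\ref{lem:surfwithbound2}. Both $f_t$ and $h_t$ are equivariant for the stabilizer $S$ of~$\ell_0$ in~$\Gamma_g$, and $j_0(S)$ acts cocompactly on the $\delta$-neighborhood of~$\ell_0$, so it suffices to treat $p$ in one compact fundamental domain, which I fix inside a compact set $K'\subset\HH^2$. By construction of $f_t$, $d(x,f_t(x))\leq L_1 t$ for $x\in\ell_0\cap K'$, and the definition of $h_t$ carries this over to $d(p,h_t(p))\leq L_2 t$ for $p\in K'$; on the other hand, condition~(d) of Lemma~\ref{lem:surfwithbound2}, applied to the $\Gamma^i$ and $j_0^i$ exactly as in Section~\ref{subsec:hallali2} (recall that $f_t$ restricts on the piece $\Sigma^i$ to the $1$-Lipschitz map $f_t^i$), yields $d(p,f_t(p))\leq L_3 t$ for $p\in\partial N_0^{\delta}\cap K'$. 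Summing these via the triangle inequality gives \eqref{eqn:additive2} with $L'=L_2+L_3$.

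I do not expect any genuine obstacle: the entire content is already present in the proof of Observation~\ref{obs:additive}. The one step demanding a little care is the reduction to a single compact set $K'$ together with the correct invocation of condition~(d) of Lemma~\ref{lem:surfwithbound2} on each piece $\Sigma^i$ — but this is exactly the bookkeeping performed there, now with $j_0$ and~$j_t$ in swapped roles.
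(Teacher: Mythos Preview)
Your proposal is correct and follows exactly the approach the paper intends: the paper does not give a separate proof of Observation~\ref{obs:additive2} but simply declares it ``identical to Observation~\ref{obs:additive} after exchanging $j_0$ and~$j_t$,'' and your transcription of that argument (including the comparison map $h_t$, the reduction by $S$-equivariance to a compact fundamental domain, and the appeal to condition~(d) of Lemma~\ref{lem:surfwithbound2}) matches that proof line by line. Your observation that the argument is slightly cleaner here, since the source geometry is the fixed $j_0$-metric, is accurate but inessential.
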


\begin{proof}[Proof of Lemma~\ref{lem:hallali2}]
We argue as in the proof of Lemma~\ref{lem:hallali}, but switch $j_0$ and $j_t$ and use \eqref{eqn:unif-bound-phit} to obtain the analogue 
\begin{equation*}
d\big(\varphi_t \circ f_t(p), \varphi_t \circ f_t(x)\big) \leq d(f_t(p), f_t(x)) - \varepsilon
\end{equation*}
of \eqref{eqn:rabiot} when $[p,x]$ projects to a pair of pants labeled~$0$ in $j_0(\Gamma_g)\backslash\HH^2\simeq\nolinebreak\Sigma_g$.
\end{proof}

\vspace{0.5cm}


\begin{thebibliography}{GGKW}

\bibitem[B]{bon96}
\textsc{F.\ Bonahon}, \textit{Shearing hyperbolic surfaces, bending pleated surfaces and Thurston's symplectic form}, Ann. Fac. Sci. Toulouse Math.~5 (1996), p.~233--297.

\bibitem[BH]{bh99}
\textsc{M.\ R.\ Bridson, A.\ Haefliger}, \textit{Metric spaces of non-positive curvature}, Grundlehren der mathematischen Wissenschaften~319, Springer-Verlag, Berlin, 1999.

\bibitem[C]{cal04}
\textsc{D.\ Calegari}, \textit{Circular groups, planar groups, and the Euler class}, Geom. Topol. Monogr.~7 (2004), p.~431--491.

\bibitem[DGK]{dgk13}
\textsc{J.\ Danciger, F.\ Gu\'eritaud, F.\ Kassel}, \textit{Margulis spacetimes via the arc complex}, preprint (2013).

\bibitem[DT]{dt13}
\textsc{B.\ Deroin, N.\ Tholozan}, \textit{Dominating surface group representations by Fuchsian ones}, preprint (2013).

\bibitem[FM]{fm12}
\textsc{B.\ Farb, D.\ Margalit}, \textit{A primer on mapping class groups}, Princeton Mathematical Series~49, Princeton University Press, Princeton, NJ, 2012.

\bibitem[GKM]{gkm00}
\textsc{D.\ Gallo, M.\ Kapovich, A.\ Marden}, \textit{The monodromy groups of Schwarzian equations on closed Riemann surfaces}, Ann. of Math.~151 (2000), p. 625--704.

\bibitem[Gh]{ghy01}
\textsc{\'E.\ Ghys}, \textit{Groups acting on the circle}, Enseign. Math.~47 (2001), p.~329--407.

\bibitem[Go1]{gol85}
\textsc{W.\ M.\ Goldman}, \textit{Nonstandard Lorentz space forms}, J. Differential Geom.~21 (1985), p.~301--308.

\bibitem[Go2]{gol88}
\textsc{W.\ M.\ Goldman}, \textit{Topological components of spaces of representations}, Invent. Math.~93 (1988), p.~557--607.

\bibitem[GX]{gx11}
\textsc{W.\ M.\ Goldman, E.\ Z.\ Xia}, \textit{Ergodicity of mapping class group actions on $\mathrm{SU}(2)$-character varieties}, in \emph{Geometry, rigidity, and group actions}, p.~591--608, Chicago Lectures in Mathematics, University of Chicago Press, Chicago, IL, 2011.

\bibitem[GGKW]{ggkw}
\textsc{F. Gu\'eritaud, O. Guichard, F. Kassel, A. Wienhard}, \textit{Anosov representations and proper actions}, in preparation.

\bibitem[GK]{gk13}
\textsc{F.\ Gu\'eritaud, F.\ Kassel}, \textit{Maximally stretched laminations on geometrically finite hyperbolic manifolds}, preprint, arXiv:1307.0256.

\bibitem[H]{hit87}
\textsc{N.\ J.\ Hitchin}, \textit{The self-duality equations on a Riemann surface}, Proc. London Math. Soc.~55 (1987), p.~59--126.

\bibitem[Ka]{kasPhD}
\textsc{F.\ Kassel}, \textit{Quotients compacts d'espaces homog\`enes r\'eels ou $p$-adiques}, PhD thesis, Universit\'e Paris-Sud~11, November~2009, available at \url{http://math.univ-lille1.fr/}$\sim$\url{kassel/}.

\bibitem[Kl]{kli96}
\textsc{B.\ Klingler}, \textit{Compl\'etude des vari\'et\'es lorentziennes \`a courbure constante}, Math. Ann.~306 (1996), p.~353--370.

\bibitem[Ko]{kob98}
\textsc{T. Kobayashi}, \textit{Deformation of compact Clifford--Klein forms of indefinite-Riemannian homogeneous manifolds}, Math. Ann.~310 (1998), p.~394--408.

\bibitem[KR]{kr85}
\textsc{R.\ S.\ Kulkarni, F.\ Raymond}, \textit{3-dimensional Lorentz space-forms and Seifert fiber spaces}, J. Differential Geom.~21 (1985), p.~231--268.

\bibitem[PT]{pt10} 
\textsc{A. Papadopoulos, G. Th\'eret}, \textit{Shortening all the simple closed geodesics on surfaces with boundary},
Proc. Amer. Math. Soc.~138 (2010), p.~1775--1784.

\bibitem[Sa1]{salPhD}
\textsc{F.\ Salein}, \textit{Vari\'et\'es anti-de Sitter de dimension~3}, PhD thesis, \'Ecole Normale Sup\'erieure de Lyon, December~1999, see \url{http://www.umpa.ens-lyon.fr/}$\sim$\url{zeghib/these.salein.pdf}.

\bibitem[Sa2]{sal00}
\textsc{F.\ Salein}, \textit{Vari\'et\'es anti-de Sitter de dimension~3 exotiques}, Ann. Inst. Fourier~50 (2000), p.~257--284.

\bibitem[Se]{sel60}
\textsc{A.\ Selberg}, \textit{On discontinuous groups in higher-dimensional symmetric spaces} (1960), in ``Collected papers'', vol.~1, p.~475--492, Springer-Verlag, Berlin, 1989.

\bibitem[T1]{thu80}
\textsc{W.\ P.\ Thurston}, \textit{The geometry and topology of three-manifolds}, lecture notes (1980), see \url{library.msri.org/books/gt3m/}.

\bibitem[T2]{thu86}
\textsc{W.\ P.\ Thurston}, \textit{Minimal stretch maps between hyperbolic surfaces}, preprint (1986), arXiv:9801039.

\bibitem[W]{wol11}
\textsc{M.\ Wolff}, \textit{Connected components of the compactification of representation spaces of surface groups}, Geom. Topol.~15 (2011), p.~1225--1295.

\end{thebibliography}
\end{document}